%%% 2022. március 9. 

\documentclass{amsart}

\newtheorem{theorem}{Theorem}[section]
\newtheorem{lemma}[theorem]{Lemma}
\newtheorem{proposition}[theorem]{Proposition}
\newtheorem{corollary}[theorem]{Corollary}

\newtheorem{conjecture}[theorem]{Conjecture}

\theoremstyle{definition}
\newtheorem{definition}[theorem]{Definition}
\newtheorem{example}[theorem]{Example}

\newtheorem{remark}[theorem]{Remark}

\def\cocoa{{\hbox{\rm C\kern-.13em o\kern-.07em C\kern-.13em o\kern-.15em A}}}

%========================================================

%\usepackage{comment}

\usepackage{amscd,amssymb}
\usepackage{youngtab}

\newenvironment{proofof}[1]{\noindent{\it Proof of
#1.}}{\hfill$\square$\\\mbox{}}

%=====================================================

\begin{document}

\title[Separating monomials for diagonalizable actions]
{Separating monomials for diagonalizable actions}

\author[M\'aty\'as Domokos]
{M\'aty\'as Domokos}
\address{Alfr\'ed R\'enyi Institute of Mathematics,
Re\'altanoda utca 13-15, 1053 Budapest, Hungary,
ORCID iD: https://orcid.org/0000-0002-0189-8831}
\email{domokos.matyas@renyi.hu}

\thanks{Partially supported by the Hungarian National Research, Development and Innovation Office,  NKFIH K 138828,  K 132002.}

\subjclass[2010]{Primary 13A50; Secondary 11B75, 14M25}

\keywords{diagonalizable groups, tori, separating invariants, zero-sum sequences}

\maketitle

\begin{abstract} 
Sets of monomials separating Zariski closed orbits under diagonalizable group actions are characterized in terms of 
the monoid of zero-sum sequences over the character group. 
This is applied to compare the degree bounds for separating invariants and generating invariants of diagonalizable group actions. 
\end{abstract}

\section{Introduction} \label{sec:intro}

Let $K$ be an algebraically closed field, and 
let $G$ be a diagonalizable linear algebraic group over $K$. 
It is well known that for a representation of $G$ on a finite dimensional $K$-vector space $V$, the algebra 
$\mathcal{O}(V)^G$ of $G$-invariant polynomial functions on $V$ is generated by monomials (with an appropriate 
choice of variables). The aim of this note is to characterize the sets of invariant monomials that form a so-called separating set in $\mathcal{O}(V)^G$. Recall that a subset $S\subset \mathcal{O}(V)^G$ is \emph{separating} if for any $v,w\in V$, such that $f(v)\neq f(w)$ for some $f\in \mathcal{O}(V)^G$, there exists an $h\in S$ with $h(v)\neq h(w)$ 
(see \cite[Section 2.4]{derksen-kemper} for this definition and some basic properties). 
The property of being a \emph{separating set} is obviously weaker than being a 
\emph{generating set}, 
but for several applications a separating set is just as good as a generating set. Therefore the study of separating sets has become popular in the past two decades. To give a sample, we mention \cite{draisma-kemper-wehlau}, \cite{grosshans:2007}, \cite{dufresne-elmer-kohls}, \cite{neusel-sezer}, \cite{kohls-kraft}, \cite{dufresne-jeffries}, \cite{kemper-lopatin-reimers}. 

For basic facts about diagonalizable linear algebraic groups see \cite[Section 8]{borel}. 
Denote by $X(G)$ the set of morphisms $G\to K^{\times}$ of algebraic groups; this is called the \emph{group of  characters of} $G$. Write $\mathbf{1}\in X(G)$ for the trivial character (i.e. $\mathbf{1}(g)=1$ for all $g\in G$). The coordinate ring $\mathcal{O}(G)$ of $G$ has $X(G)$ as a $K$-vector space basis, so it is isomorphic (as a Hopf algebra) to $K[X(G)]$, the group algebra of the abelian group $X(G)$. 

Throughout the paper we shall consider the following setup and notation. 
Given a representation of $G$ on a finite dimensional $K$-vector space $V$, we have a decomposition 
\begin{equation}\label{eq:decomposition of V} 
V=V_1\oplus\cdots\oplus V_n
\end{equation} 
as a direct sum of $1$-dimensional $G$-invariant subspaces $V_i$ of $V$. 
Pick a non-zero element $e_i\in V_i$. Then $e_1,\dots,e_n$ is a basis of $V$, and 
we have $g\cdot e_i=\chi_i(g)e_i$ for some $\chi_i\in X(G)$. 
Set 
\[\mathcal{B}(\chi_1,\dots\chi_n):=\{m\in \mathbb{N}_0^n\mid \prod_{i=1}^n\chi_i^{m_i}=\mathbf{1}\in X(G)\}.\] 
Obviously $\mathcal{B}(\chi_1,\dots\chi_n)$ is a submonoid of the additive submonoid $\mathbb{N}_0^n$ of the  free abelian group $\mathbb{Z}^n$. 
A non-zero element $m$ of $\mathcal{B}(\chi_1,\dots,\chi_n)$ 
is called an \emph{atom} if $m$ is not the sum of two non-zero elements of $\mathcal{B}(\chi_1,\dots,\chi_n)$.  
Write $\mathcal{A}(\chi_1,\dots,\chi_n)$ for the set of atoms in $\mathcal{B}(\chi_1,\dots,\chi_n)$.  
Now take a basis $x_1,\dots,x_n\in V^*$ dual to the basis $e_1,\dots,e_n$ of $V$. 
Then every monomial in $x_1,\dots,x_n$ spans a $G$-invariant subspace in $\mathcal{O}(V)$, hence $\mathcal{O}(V)^G$ is spanned by the $G$-invariant monomials. 
A monomial $x^m=x_1^{m_1}\cdots x_n^{m_n}$ is $G$-invariant if and only if 
$m\in   \mathcal{B}(\chi_1,\dots,\chi_n)$. 
Therefore  
$\{x^m\mid m\in \mathcal{B}(\chi_1,\dots,\chi_n)\}$ is a $K$-vector space basis of $\mathcal{O}(V)^G$. 
As a consequence of these observations we get the well-known statement below: 
\begin{proposition}\label{prop:monoid generators} 
For a subset $M\subset \mathcal{B}(\chi_1,\dots,\chi_n)$ the following conditions are equivalent: 
\begin{enumerate} 
\item The algebra $\mathcal{O}(V)^G$ is generated by 
$\{x^m\mid m\in M\}$. 
\item The monoid $\mathcal{B}(\chi_1,\dots,\chi_n)$ is generated by $M$. 
\item $\mathcal{A}(\chi_1,\dots,\chi_n)\subseteq M$. 
\end{enumerate}  
\end{proposition}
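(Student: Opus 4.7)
The plan is to prove the two equivalences $(1)\Leftrightarrow(2)$ and $(2)\Leftrightarrow(3)$ separately; both follow quickly from the basis description already established in the excerpt.

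For $(1)\Leftrightarrow(2)$, the key point is that $\{x^m\mid m\in \mathcal{B}(\chi_1,\dots,\chi_n)\}$ is a $K$-vector space basis of $\mathcal{O}(V)^G$ and that monomial multiplication translates to addition of exponent vectors, $x^{m}\cdot x^{m'}=x^{m+m'}$. I would use this to conclude that the $K$-subalgebra of $\mathcal{O}(V)^G$ generated by $\{x^m\mid m\in M\}$ coincides with the $K$-span of $\{x^{m^{(1)}+\cdots+m^{(k)}}\mid k\geq 0,\ m^{(i)}\in M\}$, and then invoke linear independence of distinct monomials $x^m$ to deduce that this subalgebra is all of $\mathcal{O}(V)^G$ precisely when every $m\in\mathcal{B}(\chi_1,\dots,\chi_n)$ can be written as a finite non-negative integer combination of elements of $M$, i.e.\ when $M$ generates $\mathcal{B}(\chi_1,\dots,\chi_n)$ as a monoid.

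For $(2)\Rightarrow(3)$, I would argue by contradiction: if some atom $a\in \mathcal{A}(\chi_1,\dots,\chi_n)$ were not in $M$, then any expression $a=\sum_i m^{(i)}$ with $m^{(i)}\in M$ would necessarily involve at least two non-zero summands (since $a\notin M$ and the zero vector contributes nothing), contradicting the indecomposability built into the definition of an atom. For $(3)\Rightarrow(2)$, I would use induction on $|m|:=m_1+\cdots+m_n$ to show that every non-zero $m\in\mathcal{B}(\chi_1,\dots,\chi_n)$ is a sum of atoms: either $m$ is itself an atom (hence already in $\mathcal{A}(\chi_1,\dots,\chi_n)\subseteq M$), or it splits as $m=m'+m''$ with $m',m''$ non-zero elements of $\mathcal{B}(\chi_1,\dots,\chi_n)$ having strictly smaller $|\cdot|$, so the induction hypothesis applies to both summands.

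There is no real obstacle here; the statement is essentially the translation between algebra generation in $\mathcal{O}(V)^G$ and monoid generation in $\mathcal{B}(\chi_1,\dots,\chi_n)$ via the exponent map, combined with the standard observation that a submonoid of $\mathbb{N}_0^n$ is atomic because the degree function $|\cdot|$ is a well-founded grading. The mildest subtle point is making sure the induction base case ($m=0$, which is the empty sum) and the handling of finite versus possibly empty sums are stated cleanly, but this is purely bookkeeping.
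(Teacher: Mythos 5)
Your proposal is correct and follows exactly the route the paper intends: the paper states this proposition as an immediate consequence of the observation that the invariant monomials $\{x^m\mid m\in\mathcal{B}(\chi_1,\dots,\chi_n)\}$ form a $K$-basis of $\mathcal{O}(V)^G$, together with the standard facts that any generating set of a reduced submonoid of $\mathbb{N}_0^n$ must contain all atoms and that the atoms generate (by induction on $|m|$). Your write-up simply makes explicit the details the paper leaves as "well known."
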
 

\begin{remark} \label{remark:finitely generated} 
Not all submonoids of $\mathbb{N}_0^n$ are finitely generated. However, 
diagonalizable groups are linearly reductive (see for example \cite[8.4 Proposition]{borel}), 
and hence $\mathcal{O}(V)^G$ is finitely generated as a $K$-algebra. 
Consequently, $\mathcal{B}(\chi_1,\dots,\chi_n)$ is finitely generated as a monoid, or in other words, the set $\mathcal{A}(\chi_1,\dots,\chi_n)$ is finite. 
\end{remark} 

Proposition~\ref{prop:monoid generators} is the basis of a long known connection between the theory of zero-sum sequences over abelian groups (see \cite{gao-geroldinger} for a survey) and invariant theory of abelian groups. An early application was the deduction of the Noether number of certain finite abelian groups from known results on their Davenport constants in \cite{schmid}. For more information on the interplay between invariant theory, zero-sum theory, and factorization theory see \cite{cziszter-domokos-geroldinger} 
and the references therein. 

The first aim of this note is to find the analogues of Proposition~\ref{prop:monoid generators} for separating sets of invariant monomials.  
To state it we need some notation. 
To simplify earlier notation set $\mathcal{B}:=\mathcal{B}(\chi_1,\dots,\chi_n)$, 
$\mathcal{A}:=\mathcal{A}(\chi_1,\dots,\chi_n)$. 
For $m\in \mathbb{Z}^n$ write $\mathrm{supp}(m):=\{j\in \{1,\dots,n\}\mid m_j\neq 0\}$. 
Given a subset $J\subseteq \{1,\dots, n\}$, 
and a subset $Q\subseteq \mathbb{N}_0^n$ write 
$Q_J:=\{q\in Q\mid \mathrm{supp}(q)\subseteq J\}$,   
and denote by $\mathbb{Z}Q$ the additive subgroup of $\mathbb{Z}^n$ generated by $Q$. 
Note that $\mathcal{B}_J$ is a submonoid of $\mathcal{B}$, and $\mathcal{A}_J$ is the set of atoms in the monoid $\mathcal{B}_J$. 

\begin{theorem}\label{thm:main} 
Assume that $K$ has characteristic zero. 
Then the  following conditions are equivalent for a subset $M\subseteq \mathcal{B}$: 
\begin{enumerate} 
\item The monomials $\{x^m\mid m\in M\}$ form a separating set in $\mathcal{O}(V)^G$. 
\item For any subset $J\subseteq \{1,\dots,n\}$,  $\mathcal{A}_J$ is 
contained in $\mathbb{Z}M_J$. 
\end{enumerate} 
\end{theorem}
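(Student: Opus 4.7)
The plan is to reduce both directions to the interplay between (i) the atomic decomposition of $\mathcal{B}$ and (ii) the character theory of the finitely generated abelian group $\mathbb{Z}^J/\mathbb{Z} M_J$. The support $J_v := \{i : v_i \ne 0\}$ of a point $v \in V$ dictates which monomials vanish on $v$, so throughout the argument I will split the analysis according to $J_v$, $J_w$, and $J := J_v \cap J_w$.

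For $(2)\Rightarrow(1)$, assume (2) and $x^m(v)=x^m(w)$ for every $m \in M$. First I show that every atom $a \in \mathcal{A}_{J_v}$ satisfies $\mathrm{supp}(a) \subseteq J$. By (2) applied to $J_v$, $a \in \mathbb{Z} M_{J_v}$; if some $m'$ appearing with non-zero coefficient in such an expression for $a$ had $\mathrm{supp}(m') \not\subseteq J_w$, then $x^{m'}(v) \ne 0 = x^{m'}(w)$ would contradict the hypothesis on $M$, so each such $m'$ lies in $M_J$ and therefore $\mathrm{supp}(a) \subseteq J$. Since $\mathcal{A}_{J_v}$ monoid-generates $\mathcal{B}_{J_v}$, this gives $\mathcal{B}_{J_v} = \mathcal{B}_J$, and symmetrically $\mathcal{B}_{J_w} = \mathcal{B}_J$. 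Consequently any $m \in \mathcal{B}$ for which $x^m(v)$ or $x^m(w)$ is non-zero already lies in $\mathcal{B}_J$. For such $m$ set $r_i := v_i/w_i \in K^{\times}$; the identity $x^m(v) = x^m(w)$ is equivalent to $\prod_{i \in J} r_i^{m_i} = 1$. By the hypothesis on $M$ this holds for $m \in M_J$ and hence for $m \in \mathbb{Z} M_J$, and by (2) applied to $J$ we have $\mathbb{Z} M_J \supseteq \mathbb{Z}\mathcal{A}_J = \mathbb{Z}\mathcal{B}_J$, so the identity persists on all of $\mathcal{B}_J$.

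For $(1)\Rightarrow(2)$, I argue by contradiction: fix $J$ and suppose $a \in \mathcal{A}_J \setminus \mathbb{Z} M_J$. Then $\bar a$ is a non-zero element of the finitely generated abelian group $A := \mathbb{Z}^J/\mathbb{Z} M_J$, and since $K$ is algebraically closed of characteristic zero, $K^{\times}$ is divisible and contains roots of unity of every order, so a character $\phi \colon A \to K^{\times}$ with $\phi(\bar a) \ne 1$ exists. Lifting $\phi$ to $\tilde\phi \colon \mathbb{Z}^J \to K^{\times}$, put $r_i := \tilde\phi(e_i)$ and define $v := \sum_{i \in J} r_i e_i$, $w := \sum_{i \in J} e_i$. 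For every $m \in M$, either $\mathrm{supp}(m) \not\subseteq J$ and $x^m(v) = 0 = x^m(w)$, or $m \in M_J \subseteq \mathbb{Z} M_J$ and $x^m(v) = \tilde\phi(m) = 1 = x^m(w)$; on the other hand $x^a(v) = \phi(\bar a) \ne 1 = x^a(w)$, so $x^a$ separates $v, w$ while $\{x^m : m \in M\}$ does not, violating (1).

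The main obstacle is the support-mismatch step in $(2)\Rightarrow(1)$: the hypothesis $x^m(v)=x^m(w)$ for $m \in M$ directly controls only elements of $M$, yet one needs to rule out ``mixed-type'' monomials in $\mathcal{B}_{J_v}$ that are non-zero on $v$ but vanish on $w$. The atom decomposition together with hypothesis (2) applied to $J_v$ (rather than only to $J$) is what bridges this gap, and it explains why (2) is stated for every subset $J \subseteq \{1,\dots,n\}$ and not only for $J = \{1,\dots,n\}$.
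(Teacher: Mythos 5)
Your proof is correct. The $(2)\Rightarrow(1)$ direction is essentially the paper's argument (Lemma~\ref{lemma:4}): where you observe that $b\mapsto \prod_{i\in J}r_i^{b_i}$ is a group homomorphism trivial on $\mathbb{Z}M_J\supseteq\mathbb{Z}\mathcal{B}_J$, the paper writes $b=m-q$ with $m,q\in\mathbb{N}_0M_J$ and compares $x^m/x^q$ at $v$ and $w$; your preliminary reduction $\mathcal{B}_{J_v}=\mathcal{B}_{J_v\cap J_w}$ is the contrapositive form of the paper's case analysis of supports. The $(1)\Rightarrow(2)$ direction, however, is genuinely different and substantially more elementary than either of the paper's two proofs. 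The paper's first proof (Lemma~\ref{lemma:3}) builds an auxiliary diagonalizable group $H$ with $X(H)=\mathbb{Z}^n/\mathbb{Z}M$ and shows $G\hookrightarrow H$ is an isomorphism, which requires knowing which points have Zariski closed orbits (Lemma~\ref{lemma:1}) and that invariants separate distinct closed orbits; the second proof (Section~\ref{sec:generalization}) invokes the algebro-geometric fact that a function constant on the fibres of a dominant morphism lies in the (purely inseparable closure of the) pulled-back function field. You instead produce an explicit witness pair: a character of $\mathbb{Z}^J/\mathbb{Z}M_J$ not killing $\bar a$ (available because $K^\times$ is divisible and, in characteristic zero, contains roots of unity of every order) yields $v,w$ of support $J$ on which every $x^m$, $m\in M$, agrees while $x^a$ does not — a direct violation of the definition of separating set, with no need to discuss orbits at all. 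Your character $\phi$ is of course the point of the paper's group $\mathrm{Hom}(\mathbb{Z}^J/\mathbb{Z}M_J,K^\times)$ moving $w$ to $v$, but you bypass the verification that $v$ and $w$ lie on closed orbits of the same dimension. Your argument transfers verbatim to the more general Theorem~\ref{thm:general} (replace $\mathcal{A}_J$ by $D_J$), eliminating the need for Lemma~\ref{lemma:constant along fibers} there, and it isolates exactly where characteristic zero enters; the paper's longer route earns its keep mainly through the by-products (Lemma~\ref{lemma:1} and the closed-orbit analysis) that are reused in the proof of Theorem~\ref{thm:ZB_J}.
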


\begin{remark} 
(i) In the special case when $G$ is finite the above result was proved in \cite[Theorem 2.1]{domokos}. 
For the case when $G$ is a torus a closely related  
result was  given by Dufresne and Jeffries in 
\cite[Lemma 6.2, Proposition 6.3]{dufresne-jeffries:2}, but they provide a  
condition formally stronger than condition (2) in Theorem~\ref{thm:main} to guarantee that a set of invariant monomials is separating. We note that the arguments in loc. cit. work also for not necessarily connected diagonalizable groups.

(ii) We shall give two proofs of Theorem~\ref{thm:main}: first in Section~\ref{sec:1=2} we prove it using some basic facts about diagonalizable groups. In Section~\ref{sec:generalization} we state a more general version (Theorem~\ref{thm:general}) concerning separating sets in arbitrary subalgebras generated by monomials in the polynomial algebra, and deduce it from a basic algebro-geometric lemma. 
\end{remark}

\begin{definition}\label{def:tau(B)}
For a submonoid $B$ of the additive monoid $\mathbb{N}_0^n$ 
denote by $\tau(B)$ the minimal non-negative integer $t$ such that for all 
$I\subseteq \{1,\dots.n\}$, 
the abelian group $\mathbb{Z}B_I$ is generated by 
$\bigcup_{J\subseteq I\colon |J|\le t} B_J$. 
\end{definition} 

\begin{remark}\label{remark:2'} 
The motivation for Definition~\ref{def:tau(B)} is that 
condition (2) for $M$ in Theorem~\ref{thm:main} is obviously equivalent to 
\begin{enumerate} 
\item[(2')] For any subset $J\subseteq \{1,\dots,n\}$ with $|J|\le \tau(\mathcal{B})$, 
$\mathcal{A}_J$ is contained in $\mathbb{Z}M_J$. 
\end{enumerate}  
\end{remark}

For a finitely generated abelian group $X$ denote by $\mathrm{rk}(X)$ the minimal number of generators of $X$.  
Write $G^{\circ}$ for the connected component of the identity in $G$, so $G^\circ$ is a torus. By \cite[8.7 Proposition]{borel} we have 
$G\cong G^\circ\times A$; here $A$ is a finite abelian group (whose order is not divisible by $p$ when $\mathrm{char}(K)=p>0$). Therefore $X(G)=X(G^\circ)\times X(A)$, where 
$X(G^\circ)$ is a free abelian group of rank $\dim(G)$, and $X(A)\cong A$ is the torsion subgroup of $X(G)$. 
By Remark~\ref{remark:2'} the problem of effectively finding separating sets of monomials  is greatly simplified by the uniform bound (depending only on $G$) for $\tau(\mathcal{B})$ provided by our second main result: 
 
\begin{theorem}\label{thm:ZB_J}  
We have the inequality 
\[\tau(\mathcal{B})\le 1+2\dim(G)+\mathrm{rk}(X(G))=1+3\dim(G^\circ)+\mathrm{rk}(G/G^\circ).\]  
\end{theorem}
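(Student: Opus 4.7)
Let $t = 1 + 2\dim(G) + \mathrm{rk}(X(G))$. I would prove $\tau(\mathcal{B})\le t$ by induction on $|I|$, showing that every $m \in \mathcal{B}_I$ is a $\mathbb{Z}$-linear combination of elements of $\bigcup_{J \subseteq I,\,|J| \le t} \mathcal{B}_J$. The base case $|I|\le t$ is immediate. For the inductive step with $s:=|I|>t$, any $m \in \mathcal{B}_I$ with $\mathrm{supp}(m)\subsetneq I$ already lies in $\mathcal{B}_{\mathrm{supp}(m)}$, which has strictly smaller size, so the inductive hypothesis applies. It therefore suffices to handle $m$ with $\mathrm{supp}(m)=I$; after relabeling, $I=\{1,\ldots,s\}$ and $m \in \mathbb{N}^s$. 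One is then reduced to expressing such an $m$ as a $\mathbb{Z}$-combination of elements of $\bigcup_{i \in I} \mathcal{B}_{I\setminus\{i\}}$, after which the inductive hypothesis applied to each $I\setminus\{i\}$ closes the induction.

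A useful first observation is that since $m$ is strictly positive, $\mathbb{Z}\mathcal{B}_I = L := \ker(\phi_I:\mathbb{Z}^I\to X(G))$: for any $v \in L$ and $N$ large, $v+Nm$ lies in $\mathbb{N}_0^I \cap L = \mathcal{B}_I$, so $v=(v+Nm)-Nm \in \mathbb{Z}\mathcal{B}_I$. Thus the content of the inductive step is to show $m \in \sum_{i\in I}\mathbb{Z}\mathcal{B}_{I\setminus\{i\}}$ whenever $s>t$.

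I would attack this by decomposing $X(G) = X(G^\circ)\oplus X(A) \cong \mathbb{Z}^d \oplus A$ (with $A = G/G^\circ$ finite) and writing $\chi_i = (\xi_i,\eta_i)$. The argument proceeds in two steps. (a) Apply Carath\'eodory's theorem to the positive real relation $\sum m_i \xi_i = 0$ in $\mathbb{R}^d$ to produce short positive integer subrelations $\sum_{j\in J} p_j \xi_j = 0$ with $|J|\le d+1$; multiplying by the exponent of $X(A)$ kills the torsion part, lifting these to elements of $\mathcal{B}_J$ with $|J|\le d+1 < t$. (b) Use the $s-(d+1) \ge d + r + 1$ remaining indices to assemble such short positive zero-sums (and their lifts supported on various subsets of $I$) into a $\mathbb{Z}$-linear combination yielding $m$ itself. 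The extra $d$ indices provide the degrees of freedom to match the $d$ free ranks of the image lattice in $\mathbb{Z}^d$, and the extra $r$ indices allow one to span arbitrary torsion classes in $X(A)$.

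\emph{Main obstacle.} The key difficulty is step (b): converting ``enough degrees of freedom'' into an explicit $\mathbb{Z}$-combination. A particular subtlety is that $\mathcal{B}_{I\setminus\{i\}}$ may lack a strictly positive element on its full support, so $\mathbb{Z}\mathcal{B}_{I\setminus\{i\}}$ can be a proper subgroup of $\ker(\phi_{I\setminus\{i\}})$; one must then reach $m$ by routing contributions through the $\mathcal{B}_{I\setminus\{j\}}$'s for other indices $j\ne i$. The bound $1+2d+r$ arises as $(d+1)+d+r$: the Carath\'eodory cost $d+1$ in the torus part, plus $d$ further indices for rank matching in the free lattice, plus $r$ for spanning torsion classes in $X(A)$. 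Making this rank-counting argument rigorous, in a way that actually delivers a $\mathbb{Z}$-combination of positive zero-sums rather than merely arbitrary lattice elements, is the technical heart of the proof.
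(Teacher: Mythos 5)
Your outline has a genuine gap at precisely the point you flag as the ``technical heart'': step (b) is not an argument but a hope, and it is where the entire content of the theorem lives. Everything before it is either routine (the induction scaffold, the reduction to $m$ of full support, the identity $\mathbb{Z}\mathcal{B}_I=\ker(\phi_I)$ in the presence of a strictly positive element) or a standard Carath\'eodory extraction of short positive subrelations. But ``the extra $d$ indices provide the degrees of freedom to match the $d$ free ranks'' and ``the extra $r$ indices allow one to span arbitrary torsion classes'' is rank-counting rhetoric with no mechanism behind it: a rank count can at best show that some finite-index subgroup of $\ker(\phi_I)$ is reached, whereas the theorem requires $m$ itself to lie in $\mathbb{Z}\bigcup_{|J|\le t}\mathcal{B}_J$. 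The obstruction you yourself identify --- that $\mathbb{Z}\mathcal{B}_{I\setminus\{i\}}$ may be a proper subgroup of $\ker(\phi_{I\setminus\{i\}})$ --- is exactly what defeats a naive dimension count, and you offer no device for ``routing contributions through the other $\mathcal{B}_{I\setminus\{j\}}$''. As written, the inductive step is unproved, so the proof does not go through.

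It is worth contrasting this with how the paper actually closes that step, because the route is quite different and explains where the summand $\mathrm{rk}(X(G))$ really comes from. The paper first reduces to characteristic zero (Proposition~\ref{prop:difference-closed}), then uses Theorem~\ref{thm:main} to identify $\tau(\mathcal{B})$ with the invariant-theoretic quantity $\tau(G,V)$ (Corollary~\ref{cor:tau=tau}), and bounds the latter by $1+\delta(G,V)+\mathrm{rk}(X(G))$ in Lemma~\ref{lemma:tau} together with $\delta(G,V)\le 2\dim(G)$ (Proposition~\ref{prop:delta} --- this is where the Carath\'eodory-type input sits, corresponding roughly to your step (a)). The decisive device in Lemma~\ref{lemma:tau} is not a lattice rank count but a Helly-dimension argument: after fixing a small subset $I$ realizing the orbit dimension, the remaining coordinates of two closed orbits are matched simultaneously by intersecting cosets $C_j$ in the \emph{finite} stabilizer $H$ of $v_I$, and $\kappa(H)\le 1+\mathrm{rk}(X(G))$ (Proposition~\ref{prop:diagonalizable Helly}) guarantees the common element. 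So the $\mathrm{rk}(X(G))$ term is a Helly constant for coset intersections, not a count of torsion generators. If you want a purely combinatorial proof in the monoid language (which would be a genuinely new route --- the paper only \emph{states} the monoid reformulation in Corollary~\ref{cor:reformulation} and proves it via invariant theory), you would need a combinatorial substitute for both the closed-orbit dichotomy and the Helly step; neither is supplied by your plan.
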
 

\begin{remark} \label{remark:tau(B) for tori} 
(i) When $G=G^\circ$ is a torus, as a consequence of \cite[Theorem 6.7]{dufresne-jeffries:2} of Dufresne and Jeffries (see Remark~\ref{remark:dufresne-jeffries} in the present paper for the statement) and our Corollary~\ref{cor:tau=tau} of Theorem~\ref{thm:main}, the following improvement of Theorem~\ref{thm:ZB_J} holds: 
Assume that  $G=G^\circ$ is a torus. Then we have the inequality 
\[\tau(\mathcal{B})\le 1+\dim(G)+\mathrm{rk}(X(G))=1+2\dim(G).\]  

(ii) The monoid theoretic reformulation of Theorem~\ref{thm:ZB_J} independent of diagonalizable group actions will be discussed in Section~\ref{sec:monoid}, see Corollary~\ref{cor:reformulation}. 
\end{remark} 

For the case $\mathrm{char}(K)=p>0$ a characterization of separating sets of invariant monomials for torus actions is given by Dufresne and Jeffries \cite[Proposition 6.1]{dufresne-jeffries:2}, and their argument works for not necessarily connected diagonalizable groups. 
An extension of this result (in the flavour of Theorem~\ref{thm:main}) will be formulated in Theorem~\ref{thm:pos char main}.

It is natural to expect that the relaxation of the property of being a \emph{generating set} 
to being a \emph{separating set}  should be reflected in degree bounds even in the case of completely reducible actions (this is evidently so for the modular case, see 
\cite[Corollaries 3.3.4, 3.12.3]{derksen-kemper}, or \cite{kaygorodov-lopatin-popov}). However, as far as we know, there are not many proven results in the literature confirming this expectation. One such interesting example  is 
\cite[Theorem 1.14]{derksen-makam:0} about matrix invariants. 
A result relevant to this question on the non-abelian semidirect product $C_p\rtimes C_3$ can be found in 
\cite{cziszter}. 
For finite abelian groups it is shown in \cite[Corollary 3.11]{domokos} that typically separating sets exist in strictly smaller degree than generating sets. As an application of Theorem~\ref{thm:main} we prove  Corollary~\ref{cor:inf}, which is a result for diagonalizable groups of dimension $\ge 2$ that points to a similar direction. 

The connected diagonalizable groups (abelian connected reductive groups) are the tori. Their invariant theory (e.g. degree bounds) is studied in \cite{kempf}, \cite{wehlau}, \cite{wehlau:1994}. Motivated by questions of algebraic complexity, algorithmic aspects of torus actions are investigated in \cite{burgisser-etal}, where polynomial time algorithm is given  for the problems of orbit equality, orbit closure intersection, and orbit closure containment. See also \cite[Theorem 1.8]{garg_etal} for a result on the complexity of writing down generators of certain torus actions, and \cite[Section 1.5]{garg_etal} for further motivation to study separating invariants. 
Lower degree bounds for the generators of certain invariant rings of classical reductive groups are obtained from lower degree bounds for separating invariants of torus actions in \cite{derksen-makam:1}.  
Moreover, in \cite[Theorem 2.10]{derksen-makam:2} a criterion is given by which checking Zariski closedness of the orbit of a vector under the action of a reductive group is reduced to checking Zariski closedness of its orbits with respect to a family of tori.  

After collecting in Section~\ref{sec:prel}  some preliminary results on separating invariants of diagonalizable group actions, we prove Theorem~\ref{thm:main} in 
Section~\ref{sec:1=2}.    In Section~\ref{sec:monoid} we translate Theorem~\ref{thm:main} 
purely into the language of submonoids of $\mathbb{N}_0^n$. The material in Section~\ref{sec:helly} and 
Section~\ref{sec:proof of thm ZB_J} gives the proof of Theorem~\ref{thm:ZB_J}, 
and in Section~\ref{sec:conjectures} we state some conjectures related to these results. 
We turn to the version of Theorem~\ref{thm:main} valid in positive characteristic in Section~\ref{sec:positive characteristic}. The results on degree bounds for separating invariants versus generating invariants are contained in Section~\ref{sec:examples}. 
Finally in Section~\ref{sec:generalization} we generalize Theorem~\ref{thm:main} to 
arbitrary monomial subalgebras of the polynomial algebra (so in particular we give a second proof of Theorem~\ref{thm:main}).

%%%%%%%%%%%%%%%%%%%%%%%%%%%%%%%
\section{Preliminaries on separating invariants} \label{sec:prel} 

Our group $G$ is reductive, therefore the Zariski closure of each $G$-orbit $G\cdot v$ in $V$ contains a unique 
Zariski closed $G$-orbit $G\cdot w$. 
Obviously for any $f \in \mathcal{O}(V)^G$ we have $f(v)=f(w)$. 
Moreover, a subset $S\subseteq \mathcal{O}(V)^G$ is separating if and  only if for any pair $w_1,w_2\in V$ having  distinct Zariski closed $G$-orbits there exists an $f\in S$ with $f(w_1)\neq f(w_2)$ (see for example \cite[Theorem 2.3.6]{derksen-kemper}). 

For a subset $J\subseteq \{1,\dots,n\}$ denote by $V_J$ the subspace $\bigoplus_{j\in J}V_j$ of $V$. 

\begin{lemma}\label{lemma:2} 
If $S\subseteq \mathcal{O}(V)^G$ is a separating set in $\mathcal{O}(V)^G$, then 
$\{f\vert_{V_J}\mid f\in S\}$ is a separating set in $\mathcal{O}(V_J)^G$ for any subset $J\subseteq \{1,\dots,n\}$. In particular, if 
$\{x^m\mid m\in M\}$ is  a separating set in $\mathcal{O}(V)^G$, then 
$\{x^m\mid m\in M_J\}$ is a separating set in $\mathcal{O}(V_J)^G$. 
\end{lemma}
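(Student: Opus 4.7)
The plan is to reduce the first assertion to the characterization recalled at the start of Section~\ref{sec:prel}: a subset of $\mathcal{O}(V)^G$ is separating if and only if it distinguishes any two points lying in distinct Zariski closed $G$-orbits. Because $V_J$ is a $G$-invariant closed linear subspace of $V$, I would argue that its closed $G$-orbits are exactly the closed $G$-orbits of $V$ lying inside $V_J$, which lets me transfer the separating property from $V$ down to $V_J$.

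First I would note that each $V_j$ is $G$-invariant by construction, so $V_J$ is $G$-invariant; being a linear subspace, it is also Zariski closed in $V$. Hence restriction sends $\mathcal{O}(V)^G$ into $\mathcal{O}(V_J)^G$, and in particular $\{f|_{V_J}\mid f\in S\}\subseteq \mathcal{O}(V_J)^G$. Next, for any $v\in V_J$ the Zariski closure of $G\cdot v$ in $V$ is contained in $V_J$ and agrees with its Zariski closure computed inside $V_J$, since the subspace topology on $V_J$ coincides with its intrinsic Zariski topology. Therefore the unique closed $G$-orbit inside $\overline{G\cdot v}$ is the same object whether we compute it in $V$ or in $V_J$, and two points $v,w\in V_J$ lie in distinct closed $G$-orbits in $V_J$ if and only if they lie in distinct closed $G$-orbits in $V$. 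Applying the hypothesis that $S$ is separating in $\mathcal{O}(V)^G$ then yields some $f\in S$ with $f(v)\ne f(w)$, and the restriction $f|_{V_J}$ separates $v$ from $w$ as required.

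For the ``in particular'' part, I would observe that for $m\in\mathcal{B}$ with $\mathrm{supp}(m)\not\subseteq J$ there exists $i\notin J$ with $m_i>0$, and since $x_i|_{V_J}=0$ we have $x^m|_{V_J}=0$; whereas for $m\in M_J$ the restriction $x^m|_{V_J}$ is the corresponding monomial in the dual basis of $V_J^*$. Hence $\{x^m|_{V_J}\mid m\in M\}$ differs from $\{x^m|_{V_J}\mid m\in M_J\}$ only by the inclusion of the zero function, which plays no role in separating points. I do not anticipate a real obstacle here: the argument is essentially the routine fact that restriction of a separating set of invariants to a Zariski closed $G$-invariant subvariety remains separating. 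The only point requiring a moment's care is to phrase everything in terms of closed orbits rather than orbits, so that the correspondence induced by the inclusion $V_J\hookrightarrow V$ is bijective on the relevant objects.
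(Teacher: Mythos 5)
Your argument is correct and follows essentially the same route as the paper: both use the characterization of separating sets via Zariski closed orbits, observe that closed $G$-orbits in the closed $G$-invariant subspace $V_J$ are closed $G$-orbits in $V$, and dispose of the monomials with support outside $J$ by noting they restrict to zero on $V_J$. Your write-up merely spells out in more detail the topological point that closures computed in $V_J$ and in $V$ agree.
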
 
 
 \begin{proof} 
 Distinct Zariski closed $G$-orbits in $V_J$ are distinct Zariski closed $G$-orbits in $V$, hence they can be separated by an element of $S$. Therefore $S$ restricts to a separating set on $V_J$. Moreover, if $\mathrm{supp}(m)$ is not contained $J$, then $x^m$ restricts to the constant zero function on $V_J$, hence can be omitted from any separating set 
 on $V_J$. 
 \end{proof} 

\begin{lemma}\label{lemma:0} 
For any subset $J\subseteq \{1,\dots,n\}$ and any additive submonoid $Q$ of $\mathbb{N}_0^n$ there exists an $m\in Q$ such that  
\[\mathrm{supp}(m)=\bigcup_{q\in Q\colon \mathrm{supp}(q)\subseteq J}\mathrm{supp}(q).\] 
\end{lemma}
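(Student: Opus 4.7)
The plan is to reduce this to the elementary observation that in $\mathbb{N}_0^n$ the support of a sum is the union of the supports, since there can be no cancellation among non-negative integer vectors.

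First I would introduce the shorthand $J':=\bigcup_{q\in Q,\ \mathrm{supp}(q)\subseteq J}\mathrm{supp}(q)$, noting that $J'\subseteq J\subseteq\{1,\dots,n\}$ is finite. For each index $i\in J'$, the very definition of $J'$ guarantees the existence of at least one element $q^{(i)}\in Q$ with $\mathrm{supp}(q^{(i)})\subseteq J$ and $i\in\mathrm{supp}(q^{(i)})$; I would fix one such choice for every $i\in J'$.

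Next I would set $m:=\sum_{i\in J'}q^{(i)}$. Since $Q$ is a submonoid of $\mathbb{N}_0^n$, the (finite) sum $m$ lies in $Q$. To compute $\mathrm{supp}(m)$, I would use that all summands have non-negative coordinates, so $\mathrm{supp}(m)=\bigcup_{i\in J'}\mathrm{supp}(q^{(i)})$. This union contains $J'$ because $i\in\mathrm{supp}(q^{(i)})$ for every $i\in J'$, and it is contained in $J'$ because each $q^{(i)}$ has support inside $J$ and is therefore one of the elements of $Q$ indexed in the union defining $J'$. Thus $\mathrm{supp}(m)=J'$, as required.

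There is no real obstacle here; the lemma is a purely combinatorial bookkeeping statement exploiting the absence of cancellation in $\mathbb{N}_0^n$, and its role in the paper is presumably to upgrade, in subsequent arguments, the pointwise information $\mathrm{supp}(q)\subseteq J$ over all $q\in Q$ into the existence of a single witness $m\in Q$ realizing the maximal such support.
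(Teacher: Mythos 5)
Your proof is correct and follows essentially the same route as the paper: both arguments pick finitely many elements of $Q$ whose supports cover the union in question and then sum them, using that supports are additive over $\mathbb{N}_0^n$ because there is no cancellation. The only cosmetic difference is that you index your witnesses by the elements of $J'$ while the paper simply invokes finiteness of $J$ to extract finitely many $q_i$; the substance is identical.
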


\begin{proof} 
Since $J$ is finite, there exist finitely many $q_1,\dots,q_l\in Q$ such that 
\[\bigcup_{q\in Q\colon \mathrm{supp}(q)\subseteq J}\mathrm{supp}(q)=
\bigcup_{i=1}^l\mathrm{supp}(q_i).\]  
On the other hand, $\bigcup_{i=1}^l\mathrm{supp}(q_i)=\mathrm{supp}(\sum_{i=1}^lq_i)
=\mathrm{supp}(m)$, where  $m:=\sum_{i=1}^lq_i\in Q$. 
\end{proof} 

By the decomposition \eqref{eq:decomposition of V} any $v\in V$ is uniquely written as 
$v=\sum_{i=1}^nv_i$, where $v_i\in V_i$. Set $\mathrm{supp}(v):=\{i\in\{1,\dots,n\}\mid v_i\neq 0\}$. 
For a subset $M\subseteq\mathbb{Z}^n$ denote by $\mathbb{N}_0 M$ the additive submonoid of $\mathbb{Z}^n$ generated by $M$. 

\begin{remark}\label{remark:Sigma M} 
Clearly, $\{x^m\mid m\in M\}$ is a separating set in $\mathcal{O}(V)^G$ if and only if 
$\{x^m\mid m\in \mathbb{N}_0 M\}$ is a separating set in  $\mathcal{O}(V)^G$. 
\end{remark} 

\begin{lemma}\label{lemma:1} Let $\{x^m\mid m\in M\}$ be a separating set in $\mathcal{O}(V)^G$ 
for some $M\subseteq \mathcal{B}$. Then the $G$-orbit $G\cdot v$ of $v\in V$ is Zariski closed in $V$ if and only if there exists an 
$m\in \mathbb{N}_0 M$ with $\mathrm{supp}(m)=\mathrm{supp}(v)$.  
\end{lemma}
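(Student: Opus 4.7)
The plan is to prove the two implications separately. The ($\Leftarrow$) direction is a quick consequence of the fact that an invariant monomial $x^m$ is nonzero on $v$ precisely when $\mathrm{supp}(m)\subseteq\mathrm{supp}(v)$, coupled with the standard fact that boundary points of a $G$-orbit under a diagonalizable group have strictly smaller support than the orbit itself. The ($\Rightarrow$) direction is where the separating hypothesis on $M$ enters, and I would exploit it by perturbing $v$ one coordinate at a time, letting the separating property single out a monomial of $M$ whose support contains exactly the zeroed coordinate.

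For ($\Leftarrow$), suppose $m\in\mathbb{N}_0 M$ with $\mathrm{supp}(m)=\mathrm{supp}(v)$. Since $\mathbb{N}_0 M\subseteq\mathcal{B}$, the monomial $x^m$ is $G$-invariant, and $x^m(v)\neq 0$ because every $v_i$ for $i\in\mathrm{supp}(m)$ is nonzero. The orbit closure $\overline{G\cdot v}$ lies in $V_J$ where $J:=\mathrm{supp}(v)$, and any boundary point $w\in\overline{G\cdot v}\setminus G\cdot v$ would satisfy $\mathrm{supp}(w)\subsetneq J$ (a standard property of diagonalizable group orbit closures, valid because $G$-orbits on the open subset of elements of $V_J$ with full support $J$ are cosets of the closed image of $G\to(K^\times)^J$, hence themselves closed). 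Such a $w$ would give $x^m(w)=0\neq x^m(v)$, contradicting $G$-invariance of $x^m$; hence $G\cdot v=\overline{G\cdot v}$.

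For ($\Rightarrow$), assume $G\cdot v$ is closed, put $J:=\mathrm{supp}(v)$, and for each $j\in J$ let $v^{(j)}\in V$ be obtained from $v$ by zeroing the $j$-th coordinate, so $\mathrm{supp}(v^{(j)})=J\setminus\{j\}$. Let $G\cdot w_j$ denote the unique closed orbit contained in $\overline{G\cdot v^{(j)}}$; since $\mathrm{supp}(w_j)\subseteq J\setminus\{j\}$, the two closed orbits $G\cdot v$ and $G\cdot w_j$ are distinct. By the characterization of separating sets recalled at the beginning of Section~\ref{sec:prel}, some $m^{(j)}\in M$ satisfies $x^{m^{(j)}}(v)\neq x^{m^{(j)}}(w_j)=x^{m^{(j)}}(v^{(j)})$ (the last equality because $G$-invariants are constant on orbit closures). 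Since $v$ and $v^{(j)}$ agree in every coordinate except the $j$-th, this inequality forces $j\in\mathrm{supp}(m^{(j)})$, while $x^{m^{(j)}}(v)\neq 0$ forces $\mathrm{supp}(m^{(j)})\subseteq J$. Setting $m:=\sum_{j\in J}m^{(j)}\in\mathbb{N}_0 M$ then yields $\mathrm{supp}(m)=\bigcup_{j\in J}\mathrm{supp}(m^{(j)})=J$, as required. I expect the only real obstacle to be bookkeeping the short case analysis that simultaneously produces $j\in\mathrm{supp}(m^{(j)})$ and $\mathrm{supp}(m^{(j)})\subseteq J$; the remainder is essentially the definition of separating sets together with the standard orbit-closure fact used above.
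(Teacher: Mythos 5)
Your proof is correct. The backward direction coincides in substance with the paper's: both hinge on the fact that every point of $\overline{G\cdot v}\setminus G\cdot v$ has support strictly smaller than $\mathrm{supp}(v)$, so that an invariant monomial with full support $\mathrm{supp}(v)$ rules out boundary points. You justify that fact by noting the orbit is a coset of the closed image of $G\to(K^\times)^J$ inside the full-support locus of $V_J$, whereas the paper passes to $G^\circ$ and compares stabilizer dimensions; both justifications are standard and valid, and both arguments are characteristic-free as the lemma requires. The forward direction is where you genuinely diverge. The paper invokes its Lemma~\ref{lemma:0} to produce a single $m\in\mathbb{N}_0 M$ whose support is maximal among supports of elements of $\mathbb{N}_0 M$ contained in $\mathrm{supp}(v)$, forms the truncation $w=\sum_{i\in\mathrm{supp}(m)}v_i$, verifies that every monomial of $M$ takes the same value on $v$ and on $w$, and deduces from closedness of $G\cdot v$ that $v\in\overline{G\cdot w}$, which forces $\mathrm{supp}(m)=\mathrm{supp}(v)$. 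You instead zero out one coordinate of $v$ at a time and apply the distinct-closed-orbits characterization of separating sets to each pair $(v,w_j)$, harvesting for each $j$ a monomial $m^{(j)}\in M$ with $j\in\mathrm{supp}(m^{(j)})\subseteq\mathrm{supp}(v)$, and then sum; your case analysis correctly yields both containments, and the degenerate case $v=0$ is covered by the empty sum. Your route avoids Lemma~\ref{lemma:0} and the truncation computation at the cost of $|\mathrm{supp}(v)|$ separate applications of the separating hypothesis; the paper's organization has the advantage that the same maximal-support element and truncated vector $w$ are reused in the proof of Lemma~\ref{lemma:3}.
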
 

\begin{proof} 
Applying Lemma~\ref{lemma:0} for $J:=\mathrm{supp}(v)$ and $Q:=\mathbb{N}_0 M$ we conclude the existence of 
$m\in \mathbb{N}_0 M$ with $\mathrm{supp}(m)\subseteq \mathrm{supp}(v)$ and 
$\mathrm{supp}(m)\supseteq \mathrm{supp}(m')$ 
for any $m'\in \mathbb{N}_0 M$ with $\mathrm{supp}(m')\subseteq \mathrm{supp}(v)$.  
Set $w:=\sum_{i\in\mathrm{supp}(m)}v_i$, so $\mathrm{supp}(w)=\mathrm{supp}(m)$ and 
$x_i(v)=x_i(w)$ for all $i\in \mathrm{supp}(m)$. 
We claim that $x^q(v)=x^q(w)$ for all $q\in M$. 
Indeed, if  $\mathrm{supp}(q)\nsubseteq \mathrm{supp}(v)$ then $x^q(v)=0=x^q(w)$, whereas if 
$\mathrm{supp}(q)\subseteq \mathrm{supp}(v)$, then 
$\mathrm{supp}(q)\subseteq \mathrm{supp}(m)$ by choice of $m$, and so  
\[x^q(w)=\prod_{i\in\mathrm{supp}(q)}x_i(w)^{q_i}=\prod_{i\in\mathrm{supp}(q)}x_i(v)^{q_i}=x^q(v).\]  

Assume now that the orbit $G\cdot v$ is Zariski closed. Since $x^q(w)=x^q(v)$ for all $q\in M$, we have 
$f(w)=f(v)$ for all $f\in \mathcal{O}(V)^G$, implying that $v$ lies in the Zariski closure of $G\cdot w$. 
If $\mathrm{supp}(w)\subsetneq \mathrm{supp}(v)$, then taking $i\in \mathrm{supp}(v)\setminus \mathrm{supp}(w)$, 
we have that $G\cdot w$ is contained in the $G$-stable hyperplane $L_i:=\bigoplus_{j\in \{1,\dots,n\}\setminus \{i\}}V_j$ 
in $V$, whereas $v\notin L_i$. Since $L_i$ is both $G$-stable and Zariski closed, this is a contradiction. 
It follows that $\mathrm{supp}(m)=\mathrm{supp}(w)=\mathrm{supp}(v)$. 

Conversely, assume $\mathrm{supp}(m)=\mathrm{supp}(v)$. Since $v$ is contained in the $G$-stable Zariski closed 
subset $\bigoplus_{i\in \mathrm{supp}(v)}V_i$, we conclude that the Zariski closure $\overline{G\cdot v}$ is also contained in $\bigoplus_{i\in \mathrm{supp}(v)}V_i$; that is, $\mathrm{supp}(u)\subseteq \mathrm{supp}(v)$ for any 
$u\in \overline{G\cdot v}$. Suppose for contradiction that $G\cdot v$ is not Zariski closed. Then 
$G^\circ\cdot v$ is not Zariski closed, hence there exists an element 
$u\in \overline{G^\circ\cdot v}\setminus G^\circ\cdot v$.  Then 
$\dim(\mathrm{Stab}_{G^\circ}(u))>\dim(\mathrm{Stab}_{G^\circ}(v))$. 
On the other hand, for any $z\in V$ we have $\dim(\mathrm{Stab}_{G^\circ}(z))=\bigcap_{j\in \mathrm{supp}(z)}\ker(\chi_j\vert_{G^\circ})$. It follows that $\mathrm{supp}(u)\subsetneq \mathrm{supp}(v)$, 
implying in turn that $x^m(u)=0\neq x^m(v)$. This contradicts to the assumption $u\in \overline{G\cdot v}$. 
Consequently, $G\cdot v$ is Zariski closed. 
\end{proof} 

%%%%%%%%%%%%%%%%%%%%

\section{Proof of Theorem~\ref{thm:main}} \label{sec:1=2}

\begin{lemma}\label{lemma:3} 
Supose that the characteristic of $K$ is zero, and $M\subseteq \mathcal{B}$ such that $\{x^m\mid m\in M\}$ is a separating system in $\mathcal{O}(V)^G$. Then the abelian subgroup $\mathbb{Z}\mathcal{B}$ of $\mathbb{Z}^n$ is generated by $M$.  
\end{lemma}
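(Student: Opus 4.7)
The plan is to argue by contrapositive: assume $\mathbb{Z}M \subsetneq \mathbb{Z}\mathcal{B}$ and produce two points $v, w \in V$ that are separated by some invariant monomial $x^b$ with $b \in \mathcal{B}$ but not by any $x^m$ with $m \in M$, contradicting the assumption that $\{x^m \mid m \in M\}$ is a separating set. The two test points will be $v := e_1 + \cdots + e_n$ and $w := t \cdot v$ for a cleverly chosen element $t$ of the standard torus $(K^\times)^n$ acting diagonally on $V$ via $t \cdot e_i = t_i e_i$.

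To build $t$, I view it as a homomorphism $\theta \colon \mathbb{Z}^n \to K^\times$ with $\theta(\mathbf{e}_i) = t_i$, where $\mathbf{e}_1,\dots,\mathbf{e}_n$ is the standard basis of $\mathbb{Z}^n$. The aim is to arrange $\theta|_{\mathbb{Z}M} = \mathbf{1}$ while $\theta|_{\mathbb{Z}\mathcal{B}} \neq \mathbf{1}$. Since $\mathbb{Z}\mathcal{B}/\mathbb{Z}M$ is a nontrivial finitely generated abelian group, and since in characteristic zero $K^\times$ contains a primitive $d$-th root of unity for every $d \geq 1$ and is divisible (hence injective as an abelian group), there is a nontrivial character $\mathbb{Z}\mathcal{B}/\mathbb{Z}M \to K^\times$, which extends to $\mathbb{Z}^n/\mathbb{Z}M$ and pulls back to the required $\theta$.

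Once $\theta$ is in hand the verification is immediate: for every $m \in \mathbb{Z}^n$ we get $x^m(v) = 1$ and $x^m(t \cdot v) = \prod_i t_i^{m_i} = \theta(m)$. Hence $x^m(v) = x^m(t\cdot v)$ for all $m \in M$. On the other hand, because $\theta$ is nontrivial on $\mathbb{Z}\mathcal{B}$ and $\mathbb{Z}\mathcal{B}$ is generated as a group by $\mathcal{B}$, there exists some $b \in \mathcal{B}$ with $\theta(b) \neq 1$; for this $b$ one has $x^b \in \mathcal{O}(V)^G$ and $x^b(v) \neq x^b(t\cdot v)$, contradicting the separating property of $M$.

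The only subtle step — and the place where the characteristic zero hypothesis is indispensable — is the construction of $\theta$. In characteristic $p>0$ the quotient $\mathbb{Z}\mathcal{B}/\mathbb{Z}M$ could be a nontrivial $p$-group, and such a group admits no nontrivial homomorphism into $K^\times$ (which has no $p$-torsion when $\mathrm{char}(K) = p$). This obstruction is exactly the reason a genuinely different statement is required in positive characteristic, as anticipated by Theorem~\ref{thm:pos char main}.
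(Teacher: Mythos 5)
Your proof is correct, and it takes a genuinely different route from the paper's. The paper reduces to a faithful action with an element of $\mathcal{B}$ of full support, builds the auxiliary diagonalizable group $H$ with character group $\mathbb{Z}^n/\mathbb{Z}M$, and uses the closed-orbit machinery of Section 2 (Lemmas \ref{lemma:2}, \ref{lemma:0}, \ref{lemma:1}) to show that the embedding $G\hookrightarrow H$ is onto, from which $\mathbb{Z}M=\mathbb{Z}\mathcal{B}$ falls out. You instead run the contrapositive concretely: your character $\theta$, trivial on $\mathbb{Z}M$ but not on $\mathbb{Z}\mathcal{B}$, is exactly a point of the paper's group $H$ lying outside the image of $G$, and rather than proving no such point exists you use it to manufacture the explicit witness pair $(v,\,t\cdot v)$ that is separated by an invariant monomial $x^b$ but by no $x^m$ with $m\in M$. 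This contradicts the definition of a separating set directly, so you never need closed orbits, the reduction to faithful full-support actions, or the Hopf-algebra identification of $K[\mathbb{Z}^n/\mathbb{Z}M]$ with a coordinate ring; you only need that $K^\times$ is divisible and, in characteristic zero, admits a nontrivial image of every nontrivial finitely generated abelian group (all steps you justify correctly, and you correctly locate the unique use of $\mathrm{char}(K)=0$). A further payoff of your argument is that it uses nothing about $\mathcal{B}$ beyond $M\subseteq\mathcal{B}\subseteq\mathbb{N}_0^n$ and the invariance of the monomials $x^b$, so it applies verbatim with an arbitrary subset $D\subseteq\mathbb{N}_0^n$ in place of $\mathcal{B}$ and yields the hard direction of Theorem~\ref{thm:general} without the inseparable-closure Lemma~\ref{lemma:constant along fibers}. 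What the paper's longer route buys in exchange is the isomorphism $G\cong H$ and the identification $\mathbb{Z}\mathcal{B}=\mathcal{G}$ as byproducts.
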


\begin{proof} 
Set $J:=\bigcup_{q\in\mathcal{B}}\mathrm{supp}(q)$. Then we have $\mathcal{B}=\mathcal{B}_J$ and hence $M=M_J$. Moreover, the monomials $\{x^m\mid m\in M\}$ form a separating set in $\mathcal{O}(V_J)^G$ 
by Lemma~\ref{lemma:2}. 
Thus to prove our statement, we may restrict to the case when $J=\{1,\dots,n\}$. 
Then there exists an $m\in \mathcal{B}$ with $\mathrm{supp}(m)=\{1,\dots,n\}$ 
by Lemma~\ref{lemma:0}, and consequently, 
the $G$-orbit $G\cdot v$ is Zariski closed for any $v\in V$ with $\mathrm{supp}(v)=\{1,\dots,n\}$ 
by Lemma~\ref{lemma:1}. 

Denote by $\bar G$ the factor of $G$ modulo $\cap_{i=1}^n\ker(\chi_i)$. Then the action of $G$ on $V$ induces a 
faithful action of $\bar G$ on $V$ with $\mathcal{O}(V)^G=\mathcal{O}(V)^{\bar G}$, and we have $\mathcal{B}=\mathcal{B}(\bar\chi_1,\dots,\bar\chi_n)$, where we denote by $\bar\chi_i\in X(\bar G)$ the character of $\bar G$ induced by $\chi_i$. Therefore passing from $G$ to $\bar G$, we may assume at the outset that $G$ acts faithfully on $V$. This means that the character group $X(G)$ is generated by $\chi_1,\dots,\chi_n$. 
Denoting by $e_1,\dots,e_n$ the standard generators of $\mathbb{Z}^n$, the homomorphism $\tilde\pi:e_i\mapsto \chi_i$ ($i=1,\dots,n$)  
factors through a surjective abelian group homomorphism $\pi:\mathbb{Z}^n/\mathbb{Z}M\to X(G)$, 
$e_i+\mathbb{Z}M\mapsto \chi_i$ (because $\ker(\tilde\pi)\supseteq\mathcal{B}\supseteq M$). 

Since $K$ has characteristic zero, the group algebra $K[\mathbb{Z}^n/\mathbb{Z}M]$ is a  commutative reduced 
Hopf $K$-algebra (it contains no non-zero nilpotent elements, see e.g. 
\cite[Theorem 3.1]{wallace}), therefore it is the coordinate ring $\mathcal{O}(H)$ 
of a diagonalizable group $H$ with character group 
$X(H)=\mathbb{Z}^n/\mathbb{Z}M$ (see \cite[8.3 Remark]{borel}). 
Set $\psi_i:=e_i+\mathbb{Z}M$ ($i=1,\dots,n$), where $e_1,\dots,e_n$ are the standard generators of $\mathbb{Z}^n$, so $\pi(\psi_i)=\chi_i$. 
The surjection $\pi:X(H)\twoheadrightarrow X(G)$ extends to a surjective $K$-algebra homomorphism 
between the group algebras $\mathcal{O}(H)=K[X(H)]\twoheadrightarrow K[X(G)]=\mathcal{O}(G)$. 
We keep the notation $\pi$ for this map from $\mathcal{O}(H)$ to $\mathcal{O}(G)$. 
The comorphism $\pi^*$ gives an embedding (injective homomorphism of algebraic groups)  
$\pi^*:G\hookrightarrow H$. 

We claim that $\pi^*$ is surjective onto $H$ (and hence is an isomorphism between $G$ and $H$). To prove this claim consider the representation of $H$ on $V$ given by 
\[h\cdot v:=\psi_1(h)v_1+\cdots+\psi_n(h)v_n, \quad v_i\in V_i, \ v=v_1+\cdots+v_n.\]  
Fix now an arbitrary $v\in V$ with $\mathrm{supp}(v)=\{1,\dots,n\}$. 
For any $m\in M$ we have 
\begin{equation}
\label{eq:x^m(v)=x^m(hv)}x^m(h\cdot v)=\prod_{i=1}^nx_i^{m_i}(\psi_i(h)v_i)=
\prod_{i=1}^m\psi_i(h)^{m_i}x_i^{m_i}(v_i)=x^m(v),\end{equation}
since by construction of $H$ we have $\prod_{i=1}^n\psi_i^{m_i}=\mathbf{1}_{X(H)}$. 
Note that $v$ and $h\cdot v$ both have support $\{1,\dots,n\}$. As we pointed out in the first paragraph  of this proof, $v$ and $h\cdot v$ both have Zariski closed $G$-orbits. 
On the other hand, $x^m(v)=x^m(h\cdot v)$ for all $x\in M$ by \eqref{eq:x^m(v)=x^m(hv)}. 
We conclude that $h\cdot v$ lies on the $G$-orbit of $v$; that is, there exists a $g\in G$ (depending on $h$) such that 
$g\cdot v=h\cdot v$. It follows that $\chi_i(g)=\psi_i(h)$ for all $i=1,\dots,n$. 
Consequently, 
$\psi_i(h)=\chi_i(g)=(\pi(\psi_i))(g)=\psi_i(\pi^*(g))$ for all $i=1,\dots,n$. Since the coordinate ring of $H$ is generated by 
$\psi_1,\dots,\psi_n$, we conclude that $h=\pi^*(g)$.  

Thus we showed that $\pi^*$ is an isomorphism of algebraic groups, impying in turn that $\pi:X(H)=\mathbb{Z}^n/\mathbb{Z}M\to X(G)$ is an isomorphism of abelian groups. 
That is, $\mathbb{Z}M$ generates the abelian group 
$\mathcal{G}:=\{q\in \mathbb{Z}^n\mid \prod_{i=1}^n\chi_i^{q_i}=\mathbf{1}\in X(G)\}$. 
Note finally the equality $\mathcal{G}=\mathbb{Z}\mathcal{B}$. Indeed, 
we have just proved $\mathcal{G}\subseteq\mathbb{Z}M\subseteq\mathbb{Z}\mathcal{B}$, 
whereas the obvious inclusion 
$\mathcal{G}\supseteq \mathcal{B}$ implies $\mathcal{G}\supseteq \mathbb{Z}\mathcal{B}$. 
\end{proof} 

\begin{lemma} \label{lemma:4} 
Let $M$ be a subset of $\mathcal{B}(\chi_1,\dots,\chi_n)$  such that 
for any subset $J\subseteq \{1,\dots,n\}$, the abelian group $\mathbb{Z}\mathcal{B}_J$ is 
generated by $M_J$. Then 
the monomials $\{x^m\mid m\in M\}$ form a separating set in $\mathcal{O}(V)^G$.
\end{lemma}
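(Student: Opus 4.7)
The plan is to argue the contrapositive. Suppose $v,w\in V$ satisfy $x^m(v)=x^m(w)$ for every $m\in M$; I will show that $v$ and $w$ agree on every $G$-invariant, which by the discussion at the start of Section~\ref{sec:prel} suffices to conclude that $\{x^m\mid m\in M\}$ is separating in $\mathcal{O}(V)^G$. Replacing $v,w$ by points of the unique Zariski closed orbits inside $\overline{G\cdot v}$ and $\overline{G\cdot w}$ preserves the value of every $G$-invariant (and in particular of every $x^m$ with $m\in M\subseteq\mathcal{B}$), so I may assume at the outset that $G\cdot v$ and $G\cdot w$ are themselves closed, and it is enough to prove $x^b(v)=x^b(w)$ for all $b\in\mathcal{B}$. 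The proof splits according to whether $\mathrm{supp}(v)=\mathrm{supp}(w)$.

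If the supports agree and equal some $J\subseteq\{1,\dots,n\}$, then any $b\in\mathcal{B}$ with $\mathrm{supp}(b)\not\subseteq J$ kills both $v$ and $w$, reducing to $b\in\mathcal{B}_J$. The hypothesis $\mathbb{Z}\mathcal{B}_J=\mathbb{Z}M_J$ lets me write $b+s=t$ with $s,t\in\mathbb{N}_0 M_J$, and since $x_j(v)\neq 0\neq x_j(w)$ for all $j\in J$ one has $x^s(v)\neq 0\neq x^s(w)$; expanding $x^s$ and $x^t$ as products of the $x^{m_k}$ with $m_k\in M$ then gives $x^s(v)=x^s(w)$ and $x^t(v)=x^t(w)$, whence $x^b(v)=x^t(v)/x^s(v)=x^t(w)/x^s(w)=x^b(w)$. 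If instead the supports differ, pick (without loss of generality) an $i\in\mathrm{supp}(v)\setminus\mathrm{supp}(w)$ and set $J=\mathrm{supp}(v)$. I will produce an $m_k\in M$ separating $v$ from $w$, contradicting the standing assumption. Applying Lemma~\ref{lemma:1} to the trivially separating set $\mathcal{B}$ itself, the closedness of $G\cdot v$ supplies an $m\in\mathcal{B}_J$ with $\mathrm{supp}(m)=J$, so in particular $i\in\mathrm{supp}(m)$. By hypothesis $m\in\mathbb{Z}\mathcal{B}_J=\mathbb{Z}M_J$, and any $\mathbb{Z}$-linear expansion of $m$ in terms of $M_J$ must involve at least one generator $m_k\in M_J$ with $i\in\mathrm{supp}(m_k)$ occurring with nonzero coefficient. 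This $m_k\in M$ then satisfies $\mathrm{supp}(m_k)\subseteq J=\mathrm{supp}(v)$ (so $x^{m_k}(v)\neq 0$) while $x^{m_k}(w)=0$ because $i\in\mathrm{supp}(m_k)\setminus\mathrm{supp}(w)$, as required.

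The technical core of the argument is the case of differing supports: it is there that the hypothesis $\mathbb{Z}\mathcal{B}_J=\mathbb{Z}M_J$ must be deployed for the particular choice $J=\mathrm{supp}(v)$, and where Lemma~\ref{lemma:1} (applied to the separating set $\mathcal{B}$ of all invariant monomials) is needed in order to convert closedness of $G\cdot v$ into the existence of an element of $\mathcal{B}_J$ whose support is all of $J$ and therefore witnesses the distinguishing coordinate $i$. The equal-support case, by contrast, is essentially the standard trick of turning a $\mathbb{Z}$-relation into a ratio of genuine monomials.
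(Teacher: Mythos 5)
Your proof is correct, but it takes a genuinely different route from the paper's. The paper argues directly: given $b\in\mathcal{B}$ with $x^b(v)\neq x^b(w)$, it sets $J:=\mathrm{supp}(b)$ (not $\mathrm{supp}(v)$), writes $b=m-q$ with $m,q\in\mathbb{N}_0M_J$, observes that necessarily $\mathrm{supp}(m)=J$, and then the single dichotomy ``does $\mathrm{supp}(w)$ contain $J$ or not'' disposes of both of your cases at once --- if not, $x^m(w)=0\neq x^m(v)$; if so, the ratio trick applies. Your argument instead reduces to Zariski closed orbits, splits on whether $\mathrm{supp}(v)=\mathrm{supp}(w)$, and invokes Lemma~\ref{lemma:1} (applied to the full separating set $\mathcal{B}$) to manufacture an element of $\mathcal{B}_{\mathrm{supp}(v)}$ with full support in the mismatched case; your equal-support case is the same ratio trick. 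Both are valid, and your version makes the geometric content (closed orbits detect supports) explicit. What the paper's choice $J=\mathrm{supp}(b)$ buys is that the argument is purely combinatorial in the monoid $\mathcal{B}$ and never uses that $\mathcal{B}$ arises from a group action: this is exactly why the paper can reuse the proof verbatim for Theorem~\ref{thm:general}, where $\mathcal{B}$ is replaced by an arbitrary subset $D\subseteq\mathbb{N}_0^n$ and there are no closed orbits or analogue of Lemma~\ref{lemma:1} to appeal to. Your proof would not transfer to that setting.
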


\begin{proof}  
Take any $b\in \mathcal{B}$, $v,w\in V$ such that $x^b(v)\neq x^b(w)$. 
What we have to show is that $x^m(v)\neq x^m(w)$ for some $m\in \mathbb{N}_0M$ 
(see Remark~\ref{remark:Sigma M}). Set $J:=\mathrm{supp}(b)$. If none of $\mathrm{supp}(v)$ and $\mathrm{supp}(w)$ contains $J$, then $x^b(v)=0=x^b(w)$ is a contradiction. Hence say $\mathrm{supp}(v)\supseteq J$. By assumption there exist $m,q\in \mathbb{N}_0M_J$ with $b=m-q$. Necessarily we have $\mathrm{supp}(q)\subseteq \mathrm{supp}(m)=J$: indeed, $m-q\in\mathbb{N}_0^n$ implies $\mathrm{supp}(q)\subseteq \mathrm{supp}(m)$, and $b=m-q$ implies $J=\mathrm{supp}(b)\subseteq\mathrm{supp}(m)\subseteq J$ (the last inclusion holds by $m\in \mathbb{N}_0M_J$). 
 If $\mathrm{supp}(w)$ does not contain $J$, then $x^m(w)=0$ and $x^m(v)\neq 0$, and we are done. Otherwise both of $\mathrm{supp}(v)$ and $\mathrm{supp}(w)$ contain $J$, hence $x^b(v)=\frac{x^m(v)}{x^q(v)}$ and 
$x^b(w)=\frac{x^m(w)}{x^q(w)}$. If follows by $x^b(v)\neq x^b(w)$ that 
$x^m(v)\neq x^m(w)$ or $x^q(v)\neq x^q(w)$. 
\end{proof} 

\begin{proofof}{Theorem~\ref{thm:main}}
The implication (2) $\Longrightarrow$  (1) is the content of Lemma~\ref{lemma:4}. 
The implication (1) $\Longrightarrow$ (2) is a consequence of Lemma~\ref{lemma:3} and Lemma~\ref{lemma:2}. \end{proofof}
\goodbreak

%%%%%%%%%%%%%%%%%%%%%%%

\section{Monoid theoretic characterization of $\mathcal{B}$} \label{sec:monoid} 

Before turning to the material needed for the proof of Theorem~\ref{thm:ZB_J}, 
we want to clarify its monoid theoretic content independent of diagonalizable group actions. 

\begin{definition}\label{def:difference-closed} 
We call a submonoid $B$ of the additive monoid $\mathbb{N}_0^n$  \emph{difference-closed} if it satisfies any (hence all) of the following equivalent conditions:
\begin{itemize}
\item[(i)] For any $m,q\in B$ with $m-q\in \mathbb{N}_0^n$ we have  $m-q\in B$. 
\item[(ii)] We have $B=\mathbb{Z}B\cap \mathbb{N}_0^n$. 
\item[(iii)] We have $B=H\cap \mathbb{N}_0^n$ for some subgroup of the additive group $\mathbb{Z}^n$. 
\end{itemize} 
\end{definition}  

\begin{remark} 
Condition (i) in 
Definition~\ref{def:difference-closed} is expressed 
in \cite[Chapter 2.4]{geroldinger_halter-koch} by saying that the embedding $B\hookrightarrow \mathbb{N}_0^n$ is a \emph{divisor homomorphism}. 
Such monoids are finitely generated Krull monoids. The notion of Krull monoids plays a central role in the theory of non-unique factorizations, see the book \cite{geroldinger_halter-koch}. Difference-closed submonoids of $\mathbb{N}_0^n$ appear in the literature under various names. They are called \emph{pure submonoids} of 
$\mathbb{N}_0^n$ in \cite[Section 2.B, p. 64]{bruns-gubeladze}, where it is shown in  \cite[Theorem 2.29]{bruns-gubeladze} that a positive affine monoid is normal if and only if it can be realized as a pure submonoid of $\mathbb{N}_0^n$. 
\end{remark} 

\begin{proposition}\label{prop:difference-closed} 
The following conditions are equivalent for a submonoid $B$ of the additive monoid 
$\mathbb{N}_0^n$: 
\begin{itemize} 
\item[(i)] $B$ is difference-closed. 
\item[(ii)] For any algebraically closed field $K$ whose characteristic does not divide the order of the torsion subgroup $A$ of $X:=\mathbb{Z}^n/\mathbb{Z}B$, there exists a diagonalizable group $G$ over $K$ and characters $\chi_1,\dots,\chi_n$ such that 
$B=\mathcal{B}(\chi_1,\dots,\chi_n)$ and $X\cong X(G)$ 
(so in particular, $\dim(G)=\mathrm{rk}(X/A)$). 
\end{itemize}
\end{proposition}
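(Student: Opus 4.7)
The plan is to establish the two implications separately, exploiting the well-known duality between finitely generated abelian groups and diagonalizable algebraic groups over $K$, realized by the character functor and its inverse (the group algebra construction).

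For the direction (ii) $\Rightarrow$ (i), given $G$ and characters $\chi_1,\dots,\chi_n$ with $B=\mathcal{B}(\chi_1,\dots,\chi_n)$, I would consider the group homomorphism $\phi:\mathbb{Z}^n\to X(G)$ defined on the standard basis by $e_i\mapsto\chi_i$. By the very definition of $\mathcal{B}$ we have $B=\ker(\phi)\cap\mathbb{N}_0^n$, so condition (iii) of Definition~\ref{def:difference-closed} holds with $H:=\ker(\phi)$.

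For the direction (i) $\Rightarrow$ (ii), I would set $X:=\mathbb{Z}^n/\mathbb{Z}B$ and build $G$ from its group algebra $K[X]$. The essential technical input is the reducedness of $K[X]$ under the hypothesis $\mathrm{char}(K)\nmid|A|$: decomposing $X\cong\mathbb{Z}^d\oplus A$ with $d=\mathrm{rk}(X/A)$, the factor $K[\mathbb{Z}^d]$ is a Laurent polynomial ring and $K[A]\cong K^{|A|}$ by Maschke's theorem, so the tensor product $K[X]$ is reduced. This is exactly the argument already invoked in the proof of Lemma~\ref{lemma:3} via \cite{wallace}. Consequently, by \cite[8.3 Remark]{borel}, $K[X]$ is the coordinate ring of a diagonalizable group $G$ with $X(G)\cong X$, and hence $\dim(G)=\mathrm{rk}(X/A)$. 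Setting $\chi_i:=e_i+\mathbb{Z}B\in X(G)$, the equation $\prod_i\chi_i^{m_i}=\mathbf{1}$ translates to $\sum_im_ie_i\in\mathbb{Z}B$, so that
\[\mathcal{B}(\chi_1,\dots,\chi_n)=\mathbb{Z}B\cap\mathbb{N}_0^n=B,\]
where the final equality is precisely condition (ii) of Definition~\ref{def:difference-closed}.

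I do not foresee a substantial obstacle: the argument is essentially a dictionary translation between the monoid-theoretic and the group-theoretic formulations. The only delicate point is the reducedness of $K[X]$, and the characteristic hypothesis on $K$ is tailored to this. In passing, one verifies the equivalence of the three conditions in Definition~\ref{def:difference-closed}: (iii) $\Rightarrow$ (i) because any subgroup of $\mathbb{Z}^n$ is closed under subtraction; (i) $\Rightarrow$ (ii) by writing any element of $\mathbb{Z}B\cap\mathbb{N}_0^n$ as $m-q$ with $m,q\in B$ (splitting a $\mathbb{Z}$-linear combination into its positive and negative parts) and applying (i); and (ii) $\Rightarrow$ (iii) by taking $H:=\mathbb{Z}B$.
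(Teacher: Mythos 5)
Your proof is correct and follows essentially the same route as the paper: the implication (ii)\,$\Rightarrow$\,(i) is immediate from $\mathcal{B}(\chi_1,\dots,\chi_n)$ being the intersection of a subgroup of $\mathbb{Z}^n$ with $\mathbb{N}_0^n$, and (i)\,$\Rightarrow$\,(ii) realizes $G$ via the reduced group algebra $K[\mathbb{Z}^n/\mathbb{Z}B]$ using \cite[8.3 Remark]{borel}, with the difference-closed property supplying the final equality $\mathbb{Z}B\cap\mathbb{N}_0^n=B$. Your explicit Maschke-type argument for reducedness and your verification of the equivalence of the three conditions in Definition~\ref{def:difference-closed} are fine elaborations of details the paper delegates to citations.
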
 

\begin{proof}  This is the content \cite[Theorem 5.17, Corollary 5.18]{bruns-gubeladze}.  
We include a proof using the terminology and notation of the present paper. 
A monoid of the form $\mathcal{B}(\chi_1,\dots,\chi_n)$ is by definition the intersection of 
a subgroup of $\mathbb{Z}^n$ and $\mathbb{N}_0^n$, hence is difference-closed. 
So (ii) implies (i). 

Let now $B$ be a difference-closed submonoid of $\mathbb{N}_0^n$. 
Consider the finitely generated abelian group $X:=\mathbb{Z}^n/\mathbb{Z}B$, and denote by $A$ its torsion subgroup. Let $K$ be an algebraically closed field whose characteristic does not divide $|A|$. Then the group algebra $K[X]$ is a finitely generated commutative reduced  Hopf algebra (see e.g. \cite[Theorem 3.1]{wallace}), so it can be identified with the coordinate ring $\mathcal{O}(G)$ of a diagonalizable group $G$ over $K$ (see \cite[8.3 Remark]{borel}). Then $X$ is identified with the character group $X(G)$. 
Set $\chi_i:=e_i+\mathbb{Z}B\in X$ for $i=1,\dots,n$, where $e_1,\dots,e_n$ are the standard generators of $\mathbb{Z}^n$. 
We have that 
\[\mathcal{B}(\chi_1,\dots,\chi_n)=\mathbb{Z}B\cap\mathbb{N}_0^n=B\] 
(the first equality holds by construction of $\chi_1,\dots,\chi_n$, and the second equality holds because $B$ is difference-closed). 
Note finally that the diagonalizable group decomposes as $G^\circ\times \hat A$, where $\hat A$ is a finite abelian group whose order is not divisible by $\mathrm{char}(K)$ 
(see \cite[8.7 Proposition]{borel}). Therefore $X(G)=X(G^{\circ})\times X(\hat A)$. 
Here $X(\hat A)\cong A$ is the torsion subgroup of $X(G)$, and 
$X(G)/X(\hat A)\cong X(G^\circ)$ is a free abelian group of rank  $\dim(G)$. 
This finishes the proof of the implication (i)$\Longrightarrow$(ii). 
\end{proof} 
 
By Proposition~\ref{prop:difference-closed}, our Theorem~\ref{thm:ZB_J} 
and its sharpening in Remark~\ref{remark:tau(B) for tori} for tori (based on 
\cite[Theorem 6.7]{dufresne-jeffries:2}) have the following reformulation: 

\begin{corollary}\label{cor:reformulation} 
Let $B$ be a difference-closed submonoid in $\mathbb{N}_0^n$. 
Then 
\[B\subseteq \mathbb{Z}\{m\in B\colon |\mathrm{supp}(m)|\le 1+3s+\mathrm{rk}(A)\},\] 
where $A$ is the torsion subgroup of $X:=\mathbb{Z}^n/\mathbb{Z}B$ and 
$s=\mathrm{rk}(X/A)$. 
Moreover, when $A$ is trivial, then 
\[B\subseteq \mathbb{Z}\{m\in B\colon |\mathrm{supp}(m)|\le 1+2s\}.\] 
\end{corollary}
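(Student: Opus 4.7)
The plan is to translate the statement directly into the diagonalizable group setting via Proposition~\ref{prop:difference-closed}, after which the corollary is essentially a repackaging of Theorem~\ref{thm:ZB_J} (and its torus refinement from Remark~\ref{remark:tau(B) for tori}). The only substantive input is the bound on $\tau(\mathcal{B})$; everything else is bookkeeping between the abstract and the group-theoretic pictures.

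First, given a difference-closed submonoid $B\subseteq\mathbb{N}_0^n$, I would select an algebraically closed field $K$ whose characteristic does not divide $|A|$ (for instance, $\mathrm{char}(K)=0$) and apply Proposition~\ref{prop:difference-closed} to produce a diagonalizable group $G$ over $K$ together with characters $\chi_1,\dots,\chi_n$ such that $B=\mathcal{B}(\chi_1,\dots,\chi_n)$ and $X(G)\cong X=\mathbb{Z}^n/\mathbb{Z}B$. The decomposition $X(G)=X(G^\circ)\times X(G/G^\circ)$ identifies the torsion subgroup $A$ of $X$ with $X(G/G^\circ)$ and the free quotient $X/A$ with $X(G^\circ)$. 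Since over such $K$ the finite abelian group $G/G^\circ$ and its character group are isomorphic as abstract groups, this yields $\dim(G^\circ)=\mathrm{rk}(X/A)=s$ and $\mathrm{rk}(G/G^\circ)=\mathrm{rk}(A)$.

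Next I would invoke Theorem~\ref{thm:ZB_J} to obtain
\[\tau(\mathcal{B})\le 1+3\dim(G^\circ)+\mathrm{rk}(G/G^\circ)=1+3s+\mathrm{rk}(A).\]
Unwinding Definition~\ref{def:tau(B)} with $I=\{1,\dots,n\}$, and observing that $\bigcup_{|J|\le t}B_J$ is exactly $\{m\in B:|\mathrm{supp}(m)|\le t\}$, this says that $\mathbb{Z}B$ is generated as an abelian group by the elements of $B$ whose support has size at most $1+3s+\mathrm{rk}(A)$. Since trivially $B\subseteq \mathbb{Z}B$, the first inclusion of the corollary follows.

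For the moreover statement, when $A$ is trivial the realization above yields a torus $G=G^\circ$ of dimension $s$, so in place of Theorem~\ref{thm:ZB_J} I would apply the sharper bound $\tau(\mathcal{B})\le 1+2\dim(G)=1+2s$ recorded in Remark~\ref{remark:tau(B) for tori}(i) (which rests on \cite[Theorem~6.7]{dufresne-jeffries:2} and Corollary~\ref{cor:tau=tau}), and conclude by the same unwinding. I do not expect any genuine obstacle: the serious content is already carried by Theorem~\ref{thm:ZB_J} and the Dufresne--Jeffries theorem, and the only care needed is to match the invariants $s$ and $\mathrm{rk}(A)$ of the abstract monoid $B$ with the dimensional invariants of the diagonalizable group $G$ produced by Proposition~\ref{prop:difference-closed}.
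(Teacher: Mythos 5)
Your proposal is correct and follows exactly the route the paper intends: the paper states the corollary as an immediate reformulation of Theorem~\ref{thm:ZB_J} (and the torus refinement in Remark~\ref{remark:tau(B) for tori}) via Proposition~\ref{prop:difference-closed}, and your bookkeeping identifying $\dim(G^\circ)=s$ and $\mathrm{rk}(G/G^\circ)=\mathrm{rk}(A)$ and unwinding Definition~\ref{def:tau(B)} at $I=\{1,\dots,n\}$ is exactly what is needed.
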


%%%%%%%%%%%%%%%%%%%%%%%%%

\section{Helly dimension of diagonalizable groups} \label{sec:helly} 

The \emph{Helly dimension} $\kappa(G)$ of $G$ is the minimal positive integer $d$ such that any finite system of Zariski closed cosets in $G$ having empty intersection has a subsystem of at most $d$ cosets with empty intersection 
(for the trivial group define the Helly dimension to be $1$).  
This quantity was introduced for finite groups in \cite{domokos_typical}, and for algebraic groups in \cite{domokos-szabo}.  
It was proved in \cite{domokos-szabo} that the Helly dimension of a linear algebraic group over a field of charcteristic zero is finite. For diagonalizable groups it is finite also in positive characteristic as well, and we give an upper bound for it below. 
 
\begin{proposition} \label{prop:diagonalizable Helly} 
For a diagonalizable group  $G$ we have 
\[1+\mathrm{rk}(X(G))\le \kappa(G)\le1+\dim(G^\circ)+\mathrm{rk}(X(G)).\] 
\end{proposition}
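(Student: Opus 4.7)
I handle the two inequalities separately.

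\emph{Lower bound.} To prove $1 + \mathrm{rk}(X(G)) \le \kappa(G)$, I exhibit $1+r$ closed cosets (with $r := \mathrm{rk}(X(G))$) whose total intersection is empty while every proper subfamily intersects. Take a minimal generating set $\chi_1, \ldots, \chi_r$ of $X(G)$ and set $\chi_0 := -(\chi_1 + \cdots + \chi_r)$, so that any $r$-element subset of $\{\chi_0, \ldots, \chi_r\}$ still generates $X(G)$. Choose values $\xi_i$ in the image $\chi_i(G)\subseteq K^\times$ with $\prod_{i=0}^r \xi_i \ne 1$; this is possible because each image is nontrivial and the constraint is a single multiplicative inequation. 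Define $C_i := \chi_i^{-1}(\xi_i)$. The full intersection is empty since the relation $\sum_i \chi_i = 0$ forces $\prod_i \chi_i(g) = 1$ for any $g$, contradicting $\prod_i \xi_i \ne 1$. For each $j$, the proper subfamily $\bigcap_{i \ne j} C_i$ is non-empty: the remaining $r$ characters still generate $X(G)$, so the assignment $\chi_i \mapsto \xi_i$ ($i \ne j$) extends to a homomorphism $X(G) \to K^\times$, i.e.~to an element of $G$ lying in the intersection (torsion compatibility $\xi_i^{\mathrm{ord}(\chi_i)}=1$ being automatic because $\xi_i \in \chi_i(G)$).

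\emph{Upper bound.} For $\kappa(G) \le 1 + \dim(G^\circ) + \mathrm{rk}(X(G))$, I first recast intersection-emptiness as a partial-homomorphism extension problem. Any closed coset $C \subset G$ is uniquely described by a pair $(L, \phi)$ with $L \le X(G)$ a subgroup and $\phi : L \to K^\times$ a homomorphism, via $C = \{g\in G : \chi(g) = \phi(\chi)\text{ for all }\chi \in L\}$. For a family $C_i \leftrightarrow (L_i, \phi_i)$, non-emptiness of $\bigcap_i C_i$ is equivalent to the existence of a common homomorphism $\psi : X(G) \to K^\times$ extending each $\phi_i$. Because $G$ is diagonalizable the torsion of $X(G)$ is coprime to $\mathrm{char}(K)$, and $K^\times$ is accordingly divisible for all relevant primes, so the extension problem reduces to compatibility of the $\phi_i$ on $\sum_i L_i$. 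Equivalently, $\bigcap_i C_i = \emptyset$ iff there exists a \emph{bad tuple} $(\chi_i) \in \bigoplus_i L_i$ with $\sum_i \chi_i = 0$ in $X(G)$ but $\prod_i \phi_i(\chi_i) \ne 1$ in $K^\times$.

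Next I pick a bad tuple of minimal support $S$ and aim to show $|S| \le 1 + \dim(G^\circ) + \mathrm{rk}(X(G))$ by a Carath\'eodory-style reduction. Writing $X(G) = \mathbb{Z}^s \oplus A$ with $s := \dim(G^\circ)$ and $A$ the torsion part of rank $\mathrm{rk}(X(G)) - s$, project the $\chi_i$ ($i \in S$) to the free part: if $|S|$ exceeds the claimed bound, these projections admit an extra $\mathbb{Z}$-linear relation beyond $\sum \bar\chi_i = 0$, which lifts to $\sum m_i \chi_i = \tau$ for some $\tau \in A$; the torsion part $A$ provides the remaining independent relations needed to cancel $\tau$ and produce an honest relation $\sum m_i \chi_i = 0$ in $X(G)$, independent of the original. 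The tuple $(m_i\chi_i)_{i\in S}$ lies in $\bigoplus L_i$ (each $L_i$ is closed under integer multiples), and after a rescaling making $\prod_i \phi_i(m_i\chi_i) = 1$, adding a suitable integer multiple of it to the original bad tuple annihilates at least one coordinate while preserving badness. This yields a bad tuple of strictly smaller support, contradicting minimality of $|S|$.

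\emph{Main obstacle.} The delicate step is the Carath\'eodory-type reduction: one must track the free-rank contribution (giving the $\dim(G^\circ)$ in the bound) and the torsion contribution (giving $\mathrm{rk}(X(G)) - \dim(G^\circ)$) coherently, choose the killing coordinate so that the congruence $1 + km_i \equiv 0$ modulo $\mathrm{ord}(\chi_i)$ is actually solvable, and verify that badness survives the modification. This bookkeeping is what pins down the precise bound $1 + \dim(G^\circ) + \mathrm{rk}(X(G))$ rather than a weaker one, and the reliance on divisibility of $K^\times$ in the extension step is what makes the diagonalizability of $G$ (forcing coprimality of the torsion with $\mathrm{char}(K)$) essential for the argument in positive characteristic.
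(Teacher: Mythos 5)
Your overall strategy (a direct coset construction for the lower bound and a coset-combinatorics reduction for the upper bound) is genuinely different from the paper's, which proves both inequalities by reduction to the finite abelian case, where $\kappa=1+\mathrm{rk}$ is known exactly, combined (for the upper bound) with an induction on $\dim(G)$. However, both halves of your argument have real gaps.

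\emph{Lower bound.} The step ``the remaining $r$ characters still generate $X(G)$, so the assignment $\chi_i\mapsto\xi_i$ ($i\ne j$) extends to a homomorphism'' is false as stated: extendability is obstructed by \emph{all} $\mathbb{Z}$-linear relations among the $\chi_i$, $i\ne j$, not only by the orders of the individual characters. For instance, in $X(G)=\mathbb{Z}/2\times\mathbb{Z}/4$ with $\chi_1=(1,0)$, $\chi_2=(0,1)$ and $\chi_0=-\chi_1-\chi_2$, the pair $\{\chi_0,\chi_2\}$ satisfies $2\chi_0=2\chi_2$, so an assignment with $\xi_0^2\neq\xi_2^2$ does not extend even though $\xi_0^4=\xi_2^4=1$. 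One can sometimes choose the $\xi_i$ to satisfy all such hidden relations for every $r$-element subfamily while violating the one relation $\sum_i\chi_i=0$, but your argument does not do this, and it is not clear it can always be done. The paper sidesteps the issue entirely: it picks a finite subgroup $H\le G^\circ$ with $\mathrm{rk}(H)=\dim(G^\circ)$ and $\mathrm{rk}(H\times A)=\mathrm{rk}(X(G))$ and quotes $\kappa(H\times A)=1+\mathrm{rk}(H\times A)$ for finite abelian groups.

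\emph{Upper bound.} Your translation of $\bigcap_i C_i=\emptyset$ into the existence of a ``bad tuple'' is correct (divisibility of $K^\times$ handles the extension step, and the absence of $p$-torsion in $X(G)$ in characteristic $p$ ensures abstract homomorphisms $X(G)\to K^\times$ are points of $G$). But the Carath\'eodory-type reduction, which you yourself flag as the delicate step, is not carried out, and as sketched it does not go through: to annihilate the coordinate $i_0$ you need $(1+km_{i_0})\chi_{i_0}=0$ in $X(G)$, which for $\chi_{i_0}$ of infinite order forces $km_{i_0}=-1$ and for finite order requires a coprimality condition you cannot in general arrange; likewise the ``rescaling making $\prod_i\phi_i(m_i\chi_i)=1$'' is unavailable, since that product is an arbitrary element of $K^\times$ rather than a root of unity. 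So the heart of the upper bound is missing. The paper instead inducts on $\dim(G)$: if every $H_i$ satisfies $H_i^\circ=G^\circ$, one passes to the finite quotient $G/G^\circ$ and uses the finite case; otherwise one intersects all cosets with a coset $g_1H_1$ of strictly smaller dimension and applies the inductive hypothesis inside $H_1$, using $\mathrm{rk}(X(H_1))\le\mathrm{rk}(X(G))$. You would need either to adopt that induction or to supply the missing arithmetic in your reduction.
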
 

\begin{proof} 
Apply induction on $\dim(G)=\dim(G^\circ)$. If $\dim(G^\circ)=0$, then $G$ is finite, $G\cong X(G)$, hence 
$\mathrm{rk}(X(G))=\mathrm{rk}(G)$. Moreover, by \cite[Corollary 2.3]{domokos-szabo} we have $\kappa(G)=1+\mathrm{rk}(G)$. So the statement holds when $\dim(G^\circ)=0$. 

Suppose next that $\dim(G^\circ)>0$, and the inequality 
$\kappa(G)\le1+\dim(G^\circ)+\mathrm{rk}(X(G))$  
holds for diagonalizable groups of smaller dimension. 
Set $d:=1+\dim(G^\circ)+\mathrm{rk}(X(G))$, and take Zariski closed cosets 
$g_1H_1,\dots,g_tH_t$ such that any $d$ of these cosets have a common element. We need to show that the intersection of all of these cosets is non-empty. 
Assume first that $\dim(H_i)=\dim(G)$ for all $i=1,\dots,t$. That is, $H_i^\circ=G^\circ$ for all $i$. Consider the natural surjection $\eta:G\twoheadrightarrow G/G^\circ$. Then $\eta(g_iH_i)$, $i=1,\dots,t$ are cosets in the finite abelian group 
$G/G^\circ$ such that any $d$ of them have non-empty intersection. 
As $d>1+\mathrm{rk}(X(G))>1+\mathrm{rk}(G/G^{\circ})=\kappa(G/G^\circ)$  
by \cite[Corollary 2.3]{domokos-szabo}, we conclude that 
the intersection of the cosets $\eta(g_iH_i)$ is non-empty. It follows that 
\[\emptyset\neq \eta^{-1}(\bigcap_{i=1}^t\eta(g_iH_i))\subseteq \bigcap_{i=1}^t\eta^{-1}(\eta(g_iH_i))
=\bigcap_{i=1}^tg_iH_i,\] 
and we are done in this case. 

Otherwise there is some $i$ with $\dim(H_i)<\dim(G)$. Without loss of generality we may assume 
that $\dim(H_1)<\dim(G)$. Note that $d\ge 2$, hence $g_1H_1\cap g_iH_i$ is non-empty for each $i=2,\dots,t$ by assumption, 
implying that $C_i:=H_1\cap g_1^{-1}g_iH_i$, $i=2,\dots,m$ are Zariski closed cosets in the group $H_1$. 
For any $2\le i_1<\cdots<i_{d-1}\le t$ we have 
\[C_{i_1}\cap\dots\cap C_{i_{d-1}}=g_1^{-1}(g_1H_1\cap g_{i_1}H_{i_1}\cap\dots\cap g_{i_{d-1}}H_{i_{d-1}})\neq \emptyset.\] 
Note that $X(H_1)$ is a homomorphic image of $X(G)$, hence 
$\mathrm{rk}(X(H_1))\le\mathrm{rk}(X(G))$, and so 
\[d-1=1+(\dim(G^\circ)-1)+\mathrm{rk}(X(G))\ge 1+\dim(H_1^\circ)+\mathrm{rk}(X(H_1))\ge \kappa(H_1)\] 
(the last inequality holds by the induction hypothesis).  
Therefore by definition of $\kappa(H_1)$ we have $C_2\cap\dots\cap C_t\neq\emptyset$, implying in turn that 
\[\emptyset\neq g_1(C_2\cap\dots\cap C_t)=g_1H_1\cap\dots \cap g_tH_t.\] 
Thus the inequality $\kappa(G)\le1+\dim(G^\circ)+\mathrm{rk}(X(G))$ is proved. 

In order to prove the inequality $\kappa(G)\ge 1+\mathrm{rk}(X(G))$ observe that 
$G=G^\circ\times A$, and $G^\circ$ has a finite subgroup $H$ such that 
$\mathrm{rk}(H)=\dim(G^\circ)$ and $\mathrm{rk}(H\times A)=\mathrm{rk}(H)+\mathrm{rk}(A)$. Then we have $\kappa(G)\ge\kappa(H\times A)$, and by \cite[Corollary 2.3]{domokos-szabo} we have 
$\kappa(H\times A)=1+\mathrm{rk}(H\times A)=1+\dim(G^\circ)+\mathrm{rk}(G/G^\circ)=
1+\mathrm{rk}(X(G))$. 
The proof is finished. 
\end{proof} 

\begin{remark}\label{remark:Helly K^times} 
For a finite group $G$ 
the upper bound for $\kappa(G)$ in Proposition~\ref{prop:diagonalizable Helly} is sharp. 
On the other hand for $G=K^\times$ Proposition~\ref{prop:diagonalizable Helly} gives 
$\kappa(K^\times)\le 3$. However, we have $\kappa(K^\times)=2$, so the upper bound for 
$\kappa(G)$ in Proposition~\ref{prop:diagonalizable Helly} is not sharp in general when $\dim(G)>0$. 
Indeed, assume $t\ge 3$, and $g_1H_1,\dots,g_tH_t$ are cosets in $K^\times$ such that any two have non-empty intersection. We shall show that $\cap_{i=1}^tg_iH_i\neq\emptyset$. 
It is sufficient to deal with the case when all the $H_i$ are proper subgroups of $G$. 
Then all the $H_i$ are finite. Moreover, multiplying the cosets from the left by $g_1^{-1}$ we may reduce to the case when $g_1H_1=H_1$. Since $H_1\cap g_iH_i$ is non-empty, we conclude that $g_iH_i$ is contained in the subgroup $\langle H_1,H_i\rangle$. So all our cosets are contained in the subgroup $H:=\langle H_1,\dots,H_t\rangle$ of $K^\times$. 
Now $H$ is finite, hence cyclic. Therefore $\kappa(H)\le 2$ by \cite[Proposition 4.3]{domokos} or \cite[Corollary 2.3]{domokos-szabo}, implying in turn that our $t$ cosets have non-empty intersection. 
\end{remark}

%%%%%%%%%%%%%%%%%%%%%%%%%%%%%%%%%% 
 \section{Proof of Theorem~\ref{thm:ZB_J}} \label{sec:proof of thm ZB_J} 
 
 For $v\in V$ and $J\subseteq \{1,\dots,n\}$ set $v_J:=\sum_{i\in J}v_i$. 
 We set $v_{\emptyset}=0\in V$. 
 Following \cite[Definition 5.1]{domokos-szabo}, we define $\delta(G,V)$ as the minimal non-negative integer $\delta$ such that 
 for any $v\in V$ with Zariski closed $G$-orbit, there exists a subset $J\subseteq \{1,\dots,n\}$ with $|J|=\delta$ such that 
 $G\cdot v_J$ is Zariski closed and $\dim(G\cdot v)=\dim(G\cdot v_J)$. 
 Note that by \cite[Proposition 5.2]{domokos-szabo}, if $G\cdot v_J$ is Zariski closed and $\dim(G\cdot v)=\dim(G\cdot v_J)$, then for any $L\supseteq J$ we have that $G\cdot v_L$ is Zariski closed and $\dim(G\cdot v)=\dim(G\cdot v_L)$. 
 
\begin{proposition}\label{prop:delta} 
We have the inequality $\delta(G,V)\le 2\dim(G)$. 
\end{proposition}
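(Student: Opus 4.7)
The plan is to reduce to the identity component $T:=G^\circ$, a torus of dimension $s=\dim(G)$, and then to apply Steinitz's refinement of Carath\'eodory's theorem from convex geometry. Since $G=G^\circ\times A$ with $A$ finite, the orbit $G\cdot w$ is Zariski closed if and only if $T\cdot w$ is, and the two orbits have the same dimension; this applies in particular to $w=v_J$ for any $J$. Writing $\psi_i:=\chi_i\vert_T\in X(T)$, it thus suffices to find, for each $v\in V$ with $T\cdot v$ closed, some $J\subseteq\mathrm{supp}(v)$ with $|J|\le 2s$ such that $T\cdot v_J$ is closed and $\dim(T\cdot v_J)=\dim(T\cdot v)$.

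First I would reinterpret the two conditions on $J$ in convex-geometric language. Applying Lemma~\ref{lemma:1} with the (trivially separating) set $M:=\mathcal{B}(\psi_1,\dots,\psi_n)\subseteq \mathcal{B}$, the orbit $T\cdot v_J$ is Zariski closed exactly when there exist positive integers $\lambda_i$ ($i\in J$) with $\sum_{i\in J}\lambda_i\psi_i=0$, which after normalization says that $0$ is a strictly positive convex combination of $\{\psi_i:i\in J\}$; equivalently, $0$ lies in the relative interior of $\mathrm{conv}\{\psi_i:i\in J\}\subseteq X(T)\otimes\mathbb{R}$. Since $\dim(T\cdot v_J)=\dim T-\dim\bigcap_{i\in J}\ker(\psi_i)$ equals the $\mathbb{Q}$-rank of $\{\psi_i:i\in J\}$, the dimension equality $\dim(T\cdot v_J)=\dim(T\cdot v)$ amounts to the $\mathbb{Q}$-span of $\{\psi_i:i\in J\}$ coinciding with the $\mathbb{Q}$-span $W$ of $\{\psi_i:i\in\mathrm{supp}(v)\}$.

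Set $r:=\dim_{\mathbb{Q}}W\le s$. Since $T\cdot v$ is closed by assumption, the previous paragraph shows that $0$ is an interior point of $\mathrm{conv}\{\psi_i:i\in\mathrm{supp}(v)\}$ relative to the $r$-dimensional real vector space $W\otimes\mathbb{R}$. Steinitz's theorem (the ``doubled'' Carath\'eodory: if $0$ is an interior point of the convex hull of a set in $\mathbb{R}^r$, then it is already an interior point of the convex hull of at most $2r$ of its members) now supplies $J\subseteq\mathrm{supp}(v)$ of size at most $2r\le 2s$ with $0\in\mathrm{int}_{W\otimes\mathbb{R}}\mathrm{conv}\{\psi_i:i\in J\}$. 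Non-emptiness of this interior forces the $\psi_i$ ($i\in J$) to span $W$ over $\mathbb{Q}$, settling the dimension condition, while the interior membership provides the positive integer relation needed for closedness of $T\cdot v_J$. This yields the asserted bound $\delta(G,V)\le 2\dim(G)$.

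The main obstacle is identifying the correct convex-geometric tool. A na\"ive use of Carath\'eodory would bound $|J|$ only by $r+1$, but would in general move $0$ to the boundary of the reduced convex hull and hence destroy one of the two requirements (either the $\mathbb{Q}$-span would drop, or the positive relation would be lost). Insisting that $0$ remain an \emph{interior} point after pruning is exactly the content of Steinitz's theorem, and the factor $2$ appearing in $2\dim(G)$ is precisely the factor it introduces.
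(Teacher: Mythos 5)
Your argument follows the same route as the paper: reduce to the identity component $G^\circ$ (closedness and dimension of orbits are unaffected by passing from $G$ to $G^\circ$), then settle the torus case by a Carath\'eodory-type theorem in convex geometry; the paper simply cites \cite[Proposition 5.5]{domokos-szabo} for this second step, which is itself obtained from Steinitz's theorem essentially as you describe. Your write-up is correct and complete (one immaterial slip: the inclusion $\mathcal{B}(\psi_1,\dots,\psi_n)\subseteq\mathcal{B}$ goes the wrong way, but all you need is that the full monoid of $G^\circ$-invariant monomials gives a separating set, which is true).
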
  

\begin{proof} The $G$-orbit of an element $w$ of $V$ or $V_J$  (where $J\subseteq \{1,\dots,n\}$) 
is Zariski closed if and only if its $G^\circ$-orbit is Zariski closed. Moreover, $\dim(G\cdot w)=\dim(G^\circ\cdot w)$. 
Consequently, $\delta(G,V)=\delta(G^\circ,V)$. The group $G^\circ$ is a torus, and  the  inequality 
$\delta(G^\circ,V)\le 2\dim(G^\circ)$  
is  proved in \cite[Proposition 5.5]{domokos-szabo} as a corollary of a .Caratheodory  type theorem in convex geometry. 
\end{proof} 

\begin{definition}\label{def:tau(G,V)}
Let $\tau(G,V)$ denote the minimal non-negative integer $t$ such that 
$\bigcup_{J\subseteq \{1,\dots,n\}\colon |J|\le t}\mathcal{O}(V_J)^G$ 
is a separating set in $\mathcal{O}(V)^G$ (cf.  \cite[Definition 5.8]{domokos-szabo}). 
\end{definition} 

A straightforward rewording of the proof of \cite[Lemma 5.9]{domokos-szabo} 
implies the inequality $\tau(G,V)\le \delta(G,V)+\kappa(G)$, which by 
Proposition~\ref{prop:diagonalizable Helly} yields 
$\tau(G,V)\le 1+\delta(G,V)+\dim(G^\circ)+\mathrm{rk}(X(G))$. 
Here we prove a stronger upper bound for $\tau(G,V)$ of similar nature. 

 \begin{lemma} \label{lemma:tau} 
 We have the inequality 
 \[\tau(G,V)\le 1+\delta(G,V)+\mathrm{rk}(X(G)).\] 
 \end{lemma}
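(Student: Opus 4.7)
The plan is to refine the argument of \cite[Lemma 5.9]{domokos-szabo}, which yields the weaker bound $\tau(G,V)\le \delta(G,V)+\kappa(G)$, by replacing the use of the general Helly dimension $\kappa(G)$ by a Helly-type estimate tailored to cosets of kernels of characters; these are exactly the cosets governing orbit equivalence of points with Zariski closed orbits in $V$. Setting $t:=1+\delta(G,V)+\mathrm{rk}(X(G))$, I would take $v,w\in V$ with distinct Zariski closed $G$-orbits and argue by contradiction that no element of $\bigcup_{|J|\le t}\mathcal{O}(V_J)^G$ separates $v_J$ from $w_J$. Using the definition of $\delta(G,V)$, pick $J_v\subseteq\mathrm{supp}(v)$ of size at most $\delta(G,V)$ such that $G\cdot v_{J_v}$ is Zariski closed of the same dimension as $G\cdot v$; by \cite[Proposition 5.2]{domokos-szabo}, $G\cdot v_J$ remains closed of the same dimension for every $J\supseteq J_v$, and analogously pick $J_w$ for $w$.

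The first substantive step shows that, under the contradiction hypothesis, $\mathrm{supp}(v)=\mathrm{supp}(w)=:I$ and $w_J\in G\cdot v_J$ for every $J\supseteq J_v$ with $|J|\le t$. A coordinate $i$ in the symmetric difference of $\mathrm{supp}(v)$ and $\mathrm{supp}(w)$, combined with $J_v$, produces the small set $J_v\cup\{i\}$ on which $G\cdot v_J$ is closed but $w_J$ lies on a $G$-stable coordinate hyperplane disjoint from $G\cdot v_J$, hence a separating monomial, a contradiction. For the orbit membership, local non-separability on $V_J$ forces $v_J$ and $w_J$ to have the same Zariski closed orbit; a dimension count using both $J_v$ and $J_w$ (which forces $\dim(G\cdot v)=\dim(G\cdot w)$) together with $\dim(G\cdot w_J)\le\dim(G\cdot w)$ shows $G\cdot w_J$ is itself closed and equals $G\cdot v_J$, so $w_J\in G\cdot v_J$.

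The second step rephrases the conclusion as a coset-intersection statement: for $i\in I$ set $\alpha_i:=x_i(w)/x_i(v)\in K^\times$ and $C_i:=\{g\in G:\chi_i(g)=\alpha_i\}$; then $w_J\in G\cdot v_J$ is equivalent to $\bigcap_{i\in J}C_i\ne\emptyset$. The sharper Helly estimate will be applied by choosing $J_0\subseteq I$ of minimal cardinality with $\langle\chi_j:j\in J_0\rangle=\langle\chi_i:i\in I\rangle=:H$, so $|J_0|\le\mathrm{rk}(H)\le\mathrm{rk}(X(G))$. Since $|J_v\cup J_0\cup\{i\}|\le\delta(G,V)+\mathrm{rk}(X(G))+1=t$ for every $i\in I$, one can select $g_0\in\bigcap_{j\in J_v\cup J_0}C_j$ and, for each $i\in I$, some $g_i\in\bigcap_{j\in J_v\cup J_0\cup\{i\}}C_j$. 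Then $g_0^{-1}g_i\in\bigcap_{j\in J_0}\ker(\chi_j)\subseteq\ker(\chi_i)$ (the last inclusion because $\chi_i\in H=\langle\chi_{J_0}\rangle$), so $\chi_i(g_0)=\chi_i(g_i)=\alpha_i$; hence $g_0\in\bigcap_{i\in I}C_i$ and $g_0\cdot v=w$, contradicting the distinctness of $G\cdot v$ and $G\cdot w$.

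The main obstacle lies in the dimension argument of the first step: deducing $w_J\in G\cdot v_J$ from mere non-separability on $V_J$ is delicate because $G\cdot w_J$ need not be closed a priori, and the argument must weave together the $J_v$- and $J_w$-witnesses with the bound $\dim(G\cdot w_J)\le\dim(G\cdot w)$. The Helly refinement in the final step is, by contrast, a clean algebraic consequence of the fact that the characters $\{\chi_i\}_{i\in I}$ generate a subgroup of $X(G)$ of rank at most $\mathrm{rk}(X(G))$, which is precisely the ingredient that improves the general upper bound $\kappa(G)\le 1+\dim(G^\circ)+\mathrm{rk}(X(G))$ of Proposition~\ref{prop:diagonalizable Helly} down to the sharper summand $1+\mathrm{rk}(X(G))$ in the target inequality.
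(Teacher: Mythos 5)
Your first step is essentially correct and runs parallel to the paper's argument: the support equality via the sets $J_v\cup\{i\}$ and $J_w\cup\{i\}$ together with Lemma~\ref{lemma:1}, and the upgrade from ``$v_J$ and $w_J$ are not separated'' to ``$G\cdot w_J=G\cdot v_J$'' via the dimension chain $\dim(G\cdot v_J)=\dim(G\cdot v)=\dim(G\cdot w)\ge\dim(G\cdot w_J)\ge\dim(G\cdot v_J)$ both go through, as does the translation into a coset--intersection problem. The gap is in the final Helly step. You claim that a subset $J_0\subseteq I$ of minimal cardinality with $\langle\chi_j\colon j\in J_0\rangle=\langle\chi_i\colon i\in I\rangle=:H$ satisfies $|J_0|\le\mathrm{rk}(H)$. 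This is false: the smallest generating subset extractable from a \emph{prescribed} generating set can be much larger than the minimal number of generators. For example $\{2,3\}$ generates $\mathbb{Z}$ (of rank $1$) while neither element alone does, and the same happens in $\mathbb{Z}/6\mathbb{Z}$. The weaker property your deduction actually uses, namely $\bigcap_{j\in J_0}\ker(\chi_j)\subseteq\ker(\chi_i)$ for all $i\in I$ with $|J_0|\le\mathrm{rk}(X(G))$, fails for the same data: for $G=K^\times$ with $\chi_1(z)=z^2$, $\chi_2(z)=z^3$ one has $\ker(\chi_1)\cap\ker(\chi_2)=\{1\}$ but neither kernel alone is trivial. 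What your scheme really needs is a Helly number of $1+\mathrm{rk}(X(G))$ for the cosets $C_i=\{g\in G\colon\chi_i(g)=\alpha_i\}$ inside all of $G$; for $\dim(G)>0$ this is precisely the open Conjecture~\ref{conjecture:helly}, while the proven Proposition~\ref{prop:diagonalizable Helly} only yields $1+\dim(G^\circ)+\mathrm{rk}(X(G))$, which would land you back at the weaker bound $\tau(G,V)\le\delta(G,V)+\kappa(G)$ that you set out to improve.

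The paper avoids this by one additional normalization before invoking Helly. Having shown $G\cdot v_I=G\cdot w_I$ for the $\delta$-witness $I$, it replaces $w$ by an orbit representative with $v_I=w_I$, passes to $\bar G=G/\bigcap_i\ker(\chi_i)$, and observes that $\mathrm{Stab}_{\bar G}(v_I)=:H$ is \emph{finite} because $\dim(\bar G\cdot v_I)=\dim(\bar G)$. All the translating elements $h$ with $h\cdot v_L=w_L$ then lie in this finite abelian group, and the coset system $C_j=\{h\in H\colon h\cdot v_j=w_j\}$ is handled by the sharp bound $\kappa(H)=1+\mathrm{rk}(H)\le1+\mathrm{rk}(X(G))$ for finite abelian groups from \cite{domokos-szabo}. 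That Helly statement is a genuinely combinatorial fact about coset systems which neither follows from, nor is equivalent to, extracting a small generating subset of the characters (its proof localizes at primes, and only for $p$-groups does a Burnside-basis-type extraction of the kind you propose become available). So your proposal identifies the right structure, but the decisive sharpened Helly input is missing and cannot be supplied by the generating-set argument as written.
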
 
 
 \begin{proof} 
 Set $t:=1+\delta(G,V)+\mathrm{rk}(X(G))$. Assume that the orbits $G\cdot v$ and 
 $G\cdot w$ are Zariski closed, and $f(v)=f(w)$ for all $f\in \bigcup_{|J|\le t}\mathcal{O}(V_J)^G$. Then for any $J\subseteq \{1,\dots,n\}$ with $|J|\le t$, the Zariski closures of $G\cdot v$ and $G\cdot w$ have non-empty intersection. We have to show that $G\cdot v=G\cdot w$. 
 
By symmetry we may assume $\dim(G\cdot v)\ge \dim(G\cdot w)$. By definition of $\delta(G,V)$ there exists a subset $I\subseteq \mathrm{supp}(v)$ with 
$|I|\le \delta(G,V)$ such that $G\cdot v_I$ is Zariski closed and 
$\dim(G\cdot v_I)=\dim(G\cdot v)$. By assumption the Zariski closures of 
$G\cdot v_I$ and $G\cdot w_I$ have non-empty intersection, and 
$\dim(G\cdot v_I)=\dim(G\cdot v)\ge \dim(G\cdot w)\ge\dim(G\cdot w_I)$. 
It follows that $G\cdot v_I=G\cdot w_I$. Thus replacing $w$ by an appropriate element in its orbit we may assume that $v_I=w_I$. By \cite[Proposition 5.2]{domokos-szabo} for any $J\supseteq I$ we have that the $G$-orbits of $v_J$ and $w_J$ are Zariski closed. 
In particular, for all $j\in \{1,\dots,n\}\setminus I$, the $G$-orbit of $v_{I\cup\{j\}}$ is Zariski closed, implying by Lemma~\ref{lemma:1} that there exists an $m\in \mathcal{B}_{I\cup\{j\}}$ with $\mathrm{supp}(m)=I\cup\{j\}$. By assumption we have $x^m(v)=x^m(w)$. It follows that 
\[j\in \mathrm{supp}(v)\iff x^m(v)\neq 0 \iff x^m(w)\neq 0\iff j\in \mathrm{supp}(w).\]
We infer that $\mathrm{supp}(v)=\mathrm{supp}(w)$. Clearly we may assume that 
$\mathrm{supp}(v)=\{1,\dots,n\}$. Then the stabilizer of $v$ is $\cap_{i=1}^n\ker(\chi_i)$, the kernel of the action of $G$ on $V$. Consider the induced representation of 
$\bar G=G/\cap_{i=1}^n\chi_i$ on $V$. The stabilizer of $v$ in $\bar G$ is trivial, hence 
$\dim(G\cdot v)=\dim(\bar G\cdot v)=\dim(\bar G)$. Then we have 
\[\dim(\bar G\cdot v_I)=\dim(G\cdot v_I)=\dim(G\cdot v)=\dim(\bar G\cdot v)=\dim(\bar G),\] 
implying that the stabilizer $H$ of $v_I$ in $\bar G$ is finite. 

For any subset $L\subseteq \{1,\dots,n\}\setminus \{I\}$ with $|L|\le 1+\mathrm{rk}(X(G))$, 
the orbits $G\cdot v_{I\cup L}$ and $G\cdot w_{I+L}$ are Zariski closed on one hand, and intersect nontrivially by assumption on the other hand, thus 
$\bar G\cdot v_{I\cup L}=\bar G\cdot w_{I\cup L}$. Since $v_I=w_I$, this means that there exists an $h\in H$ with $h\cdot v_L=w_L$. 
Then $h\in \cap_{j\in L}C_j$, where 
for $j\in \{1,\dots,n\}\setminus I$,  $C_j:=\{g\in H\mid g\cdot v_j=w_j\}$ is a coset in $H$. 
So any $1+\mathrm{rk}(X(G))$ of these cosets have non-empty intersection. As $H$ is a subgroup of a factor group of $G$, we have that $X(H)$ is   a factor group of a subgroup of $X(G)$, and thus $\mathrm{rk}(H)=\mathrm{rk}(X(H))\leq\mathrm{rk}(X(G))$, implying in turn by 
\cite[Corollary 2.3]{domokos-szabo} that 
$\kappa(H)\le 1+\mathrm{rk}(X(G))$.  
By definition of $\kappa(H)$ this means that $\cap_{j\in \{1,\dots,n\}\setminus I} C_j$ is non-empty. Take an element $h$ from this intersection, then we get $h\cdot v=w$. 
 \end{proof} 

\begin{remark} \label{remark:dufresne-jeffries}
In the special case when $G=G^\circ$ is a torus, Proposition~\ref{prop:delta} and 
Lemma~\ref{lemma:tau} yield the inequality $\tau(G,V)\le 1+3\dim(G)$. 
However, for this case the stronger result $\tau(G,V)\le 1+2\dim(G)$ is proved by 
Dufresne and Jeffries in \cite[Theorem 6.7]{dufresne-jeffries:2}. 
\end{remark} 
  
We record a consequence of Theorem~\ref{thm:main}: 

  \begin{corollary}\label{cor:tau=tau} 
  Assume that $\mathrm{char}(K)=0$. Then we have the equality $\tau(G,V)=\tau(\mathcal{B})$. 
  \end{corollary}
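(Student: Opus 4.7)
The plan is to translate both quantities into conditions about the same set of multi-indices and then to recognise that Theorem~\ref{thm:main} plus Lemma~\ref{lemma:2} and Lemma~\ref{lemma:3} give matching criteria in the two directions.

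First I would set $M_t := \bigcup_{J\subseteq\{1,\dots,n\},\,|J|\le t}\mathcal{B}_J$. Since $\mathcal{O}(V_J)^G$ has the $K$-basis $\{x^m : m\in \mathcal{B}_J\}$, the algebra $\bigcup_{|J|\le t}\mathcal{O}(V_J)^G$ is separating exactly when the monomials indexed by $M_t$ form a separating set in $\mathcal{O}(V)^G$. Moreover, for any $I\subseteq\{1,\dots,n\}$, a direct support computation gives $(M_t)_I = \bigcup_{K\subseteq I,\,|K|\le t}\mathcal{B}_K$, which is precisely the set used to define $\tau(\mathcal{B})$ at the subset $I$.

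Next, for the inequality $\tau(G,V)\le\tau(\mathcal{B})$ I would let $t=\tau(\mathcal{B})$. Then for every $I$ we have $\mathcal{A}_I\subseteq\mathcal{B}_I\subseteq\mathbb{Z}\mathcal{B}_I=\mathbb{Z}(M_t)_I$ by Definition~\ref{def:tau(B)}, so condition (2) of Theorem~\ref{thm:main} holds for $M_t$, hence $M_t$ is separating and $\tau(G,V)\le t$. Conversely, for $\tau(\mathcal{B})\le\tau(G,V)$ I would let $t=\tau(G,V)$, so that $\{x^m : m\in M_t\}$ is separating. Lemma~\ref{lemma:2} restricts this to a separating set $\{x^m : m\in (M_t)_I\}$ in $\mathcal{O}(V_I)^G$ for each $I$; since $\mathrm{char}(K)=0$, Lemma~\ref{lemma:3} (applied to the $G$-module $V_I$ with characters $\{\chi_j : j\in I\}$) then yields $\mathbb{Z}\mathcal{B}_I=\mathbb{Z}(M_t)_I$, which is exactly the condition in Definition~\ref{def:tau(B)} for $\tau(\mathcal{B})\le t$.

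There is no substantive obstacle; the proof is essentially a bookkeeping exercise once Theorem~\ref{thm:main} and its supporting Lemmas~\ref{lemma:2}, \ref{lemma:3} are in place. The only point requiring mild care is the identification, when restricting to $V_I$, of the submonoid $\mathcal{B}(\chi_j : j\in I)\subseteq\mathbb{N}_0^I$ with $\mathcal{B}_I\subseteq\mathbb{N}_0^n$ under the natural embedding, so that the statements of the two lemmas can be applied verbatim to the smaller representation. The characteristic zero hypothesis is used exactly where it is needed in Lemma~\ref{lemma:3}.
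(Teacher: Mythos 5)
Your argument is correct and follows essentially the same route as the paper: both directions are obtained by applying the two implications of Theorem~\ref{thm:main} (your use of Lemma~\ref{lemma:2} plus Lemma~\ref{lemma:3} in the reverse direction is exactly how the implication (1) $\Longrightarrow$ (2) is proved there) to the set of exponents supported on subsets of size at most $t$. The only cosmetic difference is that you take $M_t=\bigcup_{|J|\le t}\mathcal{B}_J$ where the paper takes $\bigcup_{|J|\le t}\mathcal{A}_J$; since $\mathbb{Z}\mathcal{B}_J=\mathbb{Z}\mathcal{A}_J$ and both sets of monomials span the same subalgebras, this changes nothing.
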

  
  \begin{proof} 
 Set  $M:=\bigcup_{J\subseteq \{1,\dots,n\}\colon |J|\le \tau(\mathcal{B})} \mathcal{A}_J$. Then by Definition~\ref{def:tau(B)} 
 for all $I\subseteq \{1,\dots,n\}$ we have that $\mathcal{A}_I\subseteq \mathbb{Z}M_I$, 
 therefore by the implication (2) $\Longrightarrow$ (1) of Theorem~\ref{thm:main} we infer 
 that $\{x^m\mid m\in M\}$ is a separating set in $\mathcal{O}(V)^G$. 
 As $\{x^m\mid m\in M\}\subseteq \bigcup_{J\subseteq \{1,\dots,n\}\colon |J|\le \tau(\mathcal{B})}\mathcal{O}(V_J)^G$, the latter is also a separating set in  $\mathcal{O}(V)^G$. 
 This shows the inequality $\tau(G,V)\le \tau(\mathcal{B})$. 
 
 In order to prove the reverse inequality set 
$M:=\bigcup_{J\subseteq \{1,\dots,n\}\colon |J|\le \tau(G,V)} \mathcal{A}_J$. 
 Then  by Definition~\ref{def:tau(G,V)} and by Proposition~\ref{prop:monoid generators} we have that 
 $\{x^m\mid m\in M\}$ is a separating set in $\mathcal{O}(V)^G$. 
 By the implication (1) $\Longrightarrow$ (2) of Theorem~\ref{thm:main} we infer 
 that for any subset $I\subseteq \{1,\dots,n\}$, $\mathcal{A}_I\subseteq \mathbb{Z}M_I$. 
 Note that $M_I=\bigcup_{J\subseteq I\colon |J|\le \tau(G,V)} \mathcal{A}_J$. 
 This shows the inequality $\tau(\mathcal{B})\le\tau(G,V)$. 
  \end{proof} 
  
  \bigskip
 \begin{proofof}{Theorem~\ref{thm:ZB_J}} 
  In the statement of this theorem the characteristic of $K$ is arbitrary. 
 However, by Proposition~\ref{prop:difference-closed} we may reduce to the case when 
 $\mathrm{char}(K)=0$, and then by Corollary~\ref{cor:tau=tau} we may conclude 
 that $\tau(\mathcal{B})=\tau(G,V)$. On the other hand, 
 combining  Proposition~\ref{prop:delta} 
Lemma~\ref{lemma:tau} we obtain that $\tau(G,V)\le 1+2\dim (G)+\mathrm{rk}(X(G))$. 
 \end{proofof} 
  
  %%%%%%%%%%%%%%%%%%%%%%%%%%%%
  
 \section{Some conjectures} \label{sec:conjectures}

A solution in the affirmative of Conjecture~\ref{conjecture:helly} below would give a common generalization of the known sharp bounds for the Helly dimension of a finite group and of the $1$-dimensional 
torus $K^\times$  (cf. Remark~\ref{remark:Helly K^times}):  

\begin{conjecture}\label{conjecture:helly} 
The statement of Proposition~\ref{prop:diagonalizable Helly} can be sharpened to 
\[\kappa(G)\le 1+\mathrm{rk}(X(G)).\] 
\end{conjecture}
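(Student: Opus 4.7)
The plan is to dualize the question via the character group and reduce the conjecture to a Carath\'eodory-type statement about kernels of sum maps between finitely generated abelian groups. A Zariski closed coset $g_iH_i\subseteq G$ corresponds bijectively to the pair $(M_i,\phi_i)$ where $M_i:=\{\chi\in X(G)\mid \chi|_{H_i}=\mathbf{1}\}\leq X(G)$ and $\phi_i\colon M_i\to K^\times$ is the homomorphism $\chi\mapsto\chi(g_i)$: indeed $x\in g_iH_i$ iff $\chi(x)=\phi_i(\chi)$ for every $\chi\in M_i$. Therefore $\bigcap_{i=1}^tg_iH_i\neq\emptyset$ iff the $\phi_i$ admit a common extension to a homomorphism $\sum_{i=1}^tM_i\to K^\times$.

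Applying $\mathrm{Hom}(-,K^\times)$ to the short exact sequence
\[
0\to R\to \bigoplus_{i=1}^tM_i\xrightarrow{\sigma}\sum_{i=1}^tM_i\to 0,
\]
where $\sigma$ is the sum map and $R:=\ker(\sigma)$, and using that $K^\times$ is a divisible (hence injective) abelian group, one sees that a common extension of the $\phi_i$ exists iff the combined homomorphism $\tilde\phi\colon\bigoplus_iM_i\to K^\times$, $(\chi_i)\mapsto\prod_i\phi_i(\chi_i)$, vanishes on $R$. The same argument applied to any subset $J\subseteq\{1,\dots,t\}$ shows that the $\phi_i$ with $i\in J$ glue iff $\tilde\phi$ vanishes on $R_J:=\ker(\bigoplus_{i\in J}M_i\to\sum_{i\in J}M_i)$, viewed as a subgroup of $R$ via extension by zero. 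Hence the hypothesis that any $d:=1+\mathrm{rk}(X(G))$ of the cosets intersect amounts to $\tilde\phi$ being trivial on the subgroup $R^{\leq d}\leq R$ generated by all elements of $R$ whose support (the set of indices with a nonzero entry) has size at most $d$. Thus the conjecture reduces to the following purely algebraic assertion:

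\textbf{Key Claim.} \emph{For subgroups $M_1,\dots,M_t$ of a finitely generated abelian group $X$ of rank $r:=\mathrm{rk}(X)$, the kernel $R=\ker(\bigoplus_i M_i\to X)$ of the sum map is generated, as an abelian group, by its elements whose support has size at most $r+1$.}

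To attack the Key Claim I would take $\rho=(\chi_i)_{i=1}^t\in R$ of minimal support $S$ with $|S|>r+1$ and look for a contradiction by expressing $\rho$ modulo $R^{\leq r+1}$ as an element of $R$ of strictly smaller support. Since any $r+1$ of the nonzero entries $\chi_{i_1},\dots,\chi_{i_{r+1}}$ (with $i_j\in S$) are $\mathbb{Q}$-linearly dependent in $(X/X_{\mathrm{tors}})\otimes\mathbb{Q}$, after clearing denominators and multiplying by the exponent of $X_{\mathrm{tors}}$ one obtains integers $a_1,\dots,a_{r+1}$, not all zero, with $\sum_ja_j\chi_{i_j}=0$ in $X$; the tuple with $a_j\chi_{i_j}$ in position $i_j$ and zeros elsewhere lies in $R^{\leq r+1}$ and is the auxiliary small-support relation to subtract from a suitable integer multiple of $\rho$. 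The main obstacle will be carrying out this support-reduction cleanly in the presence of torsion: one natural plan is to first settle the torsion-free case $X=\mathbb{Z}^r$ by a Carath\'eodory-style integer elimination argument, and then reduce general $X$ to the torsion-free case by induction on $|X_{\mathrm{tors}}|$, invoking the known finite case of the conjecture --- the bound $\kappa(A)\leq 1+\mathrm{rk}(A)$ from \cite[Corollary~2.3]{domokos-szabo} --- at the torsion steps.
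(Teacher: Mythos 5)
First, a point of order: this statement is posed as an \emph{open conjecture} in the paper --- no proof is given there, so there is nothing to compare your argument against. What follows is an assessment of your proposal on its own terms.

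Your dual reformulation is sound. Closed cosets of a diagonalizable group do correspond bijectively to pairs $(M_i,\phi_i)$ as you describe (using $H_i=(H_i^{\perp})^{\perp}$ and $G(K)=\mathrm{Hom}(X(G),K^\times)$), and the injectivity of the divisible group $K^\times$ correctly converts ``the cosets indexed by $J$ meet'' into ``$\tilde\phi$ vanishes on $R_J$''. In characteristic zero the conjecture is in fact \emph{equivalent} to your Key Claim (a nontrivial finitely generated quotient $R/R^{\le d}$ always admits a nontrivial homomorphism to $K^\times$, which can be realized by a coset configuration), so the reduction loses nothing; for $r=1$ the Key Claim is true and is exactly the content of $\kappa(K^\times)=2$ in Remark~\ref{remark:Helly K^times}.

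The genuine gap is in your attack on the Key Claim, and it is precisely the difficulty that makes the conjecture hard. The $\mathbb{Q}$-linear elimination produces integers $a_1,\dots,a_{r+1}$ that you cannot take to be $\pm 1$; to cancel the entry of $\rho$ at position $i_{j_0}$ against the auxiliary relation $\sigma$ you must first multiply $\rho$ by $ea_{j_0}$. The support-reduction induction therefore only yields that some \emph{nonzero integer multiple} of $\rho$ lies in $R^{\le r+1}$, i.e.\ that $R/R^{\le r+1}$ is a finite group. That is not enough: a nontrivial finite quotient still supports a nontrivial homomorphism to $K^\times$ in characteristic zero, so the cosets can still fail to meet. (This ``finite index'' conclusion is essentially the rational/convex-geometric information already exploited in Proposition~\ref{prop:diagonalizable Helly}, and is where the extra $\dim(G^\circ)$ in its bound comes from.) Note also that you misplace the obstacle: the uncontrolled multipliers arise from clearing denominators over $\mathbb{Q}$ and are present already when $X=\mathbb{Z}^r$ is free, so the plan ``settle the torsion-free case first, then induct on torsion'' does not isolate the difficulty --- the torsion-free case with $r\ge 2$ (equivalently $\kappa((K^\times)^2)\le 3$) is itself open. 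In short: the reduction to the Key Claim is a worthwhile and correct reformulation, but the sketch does not prove the Key Claim, and hence does not prove the conjecture.
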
 

A solution in the affirmative to Conjecture~\ref{conjecture:tau} below would give a common generalization of the known sharp bounds for $\tau(\mathcal{B})$ for finite groups and for tori: 

\begin{conjecture}\label{conjecture:tau}  
The statement of Theorem~\ref{thm:ZB_J} can be sharpened to 
\[\tau(\mathcal{B})\le 1+\dim(G)+\mathrm{rk}(X(G)).\] 
\end{conjecture}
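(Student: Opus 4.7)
The strategy is to reduce to characteristic zero and sharpen the Helly step in the proof of Lemma~\ref{lemma:tau}. By Proposition~\ref{prop:difference-closed} and Corollary~\ref{cor:tau=tau}, it suffices to prove in characteristic zero the geometric counterpart
\[\tau(G,V)\le 1+\dim(G)+\mathrm{rk}(X(G))=1+2\dim(G^\circ)+\mathrm{rk}(A),\]
where $A:=G/G^\circ$. The plan is to replace the term $\mathrm{rk}(X(G))$ appearing in Lemma~\ref{lemma:tau} by the smaller quantity $\mathrm{rk}(A)$, while keeping the contribution $2\dim(G^\circ)$ furnished by the Dufresne--Jeffries bound $\delta(G^\circ,V)\le 2\dim(G^\circ)$ of \cite[Proposition 5.5]{domokos-szabo}.

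Following the proof of Lemma~\ref{lemma:tau}, one reduces to the case that $\bar G$ acts faithfully on $V$ with $\mathrm{supp}(v)=\{1,\dots,n\}$, and $w$ a second point with $G\cdot w$ Zariski closed of dimension at most $\dim(G\cdot v)$. The crucial refinement is to select $I\subseteq \{1,\dots,n\}$ with $|I|\le 2\dim(G^\circ)$ enjoying two properties simultaneously: (a) $G\cdot v_I$ is Zariski closed with $\dim(G\cdot v_I)=\dim(G\cdot v)$, and (b) the restricted characters $\{\bar\chi_i|_{\bar G^\circ}\colon i\in I\}$ generate $X(\bar G^\circ)$ as an abelian group. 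Given (a), the separation hypothesis applied at $J=I$ forces $G\cdot v_I=G\cdot w_I$, so after translating $w$ one may assume $v_I=w_I$; for each $j\notin I$ one obtains the coset $C_j:=\{h\in H\mid h\cdot v_j=w_j\}$ in the finite stabilizer $H:=\mathrm{Stab}_{\bar G}(v_I)$. Given (b), the subgroup $H\cap \bar G^\circ$ is trivial, so $H$ embeds into $\bar G/\bar G^\circ$, which is a quotient of $A$; hence $\mathrm{rk}(H)\le\mathrm{rk}(A)$, and $\kappa(H)\le 1+\mathrm{rk}(A)$ by \cite[Corollary 2.3]{domokos-szabo}. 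The separation hypothesis applied to index sets $J=I\cup L$ with $|L|\le 1+\mathrm{rk}(A)$ (so that $|J|\le 1+2\dim(G^\circ)+\mathrm{rk}(A)$) ensures that any $1+\mathrm{rk}(A)$ of the cosets $\{C_j\}_{j\notin I}$ have a common point, and the Helly property for $H$ then forces $\bigcap_{j\notin I}C_j\ne\emptyset$, producing $h\in H$ with $h\cdot v=w$.

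The main obstacle is the simultaneous selection of $I$ satisfying (a) and (b). Property (a) is the Carath\'eodory-type statement underlying $\delta(G^\circ,V)\le 2\dim(G^\circ)$: it amounts to choosing at most $2\dim(G^\circ)$ of the weights $\bar\chi_i|_{\bar G^\circ}$ (for $i\in \mathrm{supp}(v)$) whose convex hull in $X(\bar G^\circ)\otimes_{\mathbb{Z}}\mathbb{R}$ contains the origin. Property (b) superimposes the requirement that the selected weights generate $X(\bar G^\circ)\cong\mathbb{Z}^{\dim(G^\circ)}$ as a lattice. Since the full collection $\{\bar\chi_i|_{\bar G^\circ}\}_{i=1}^n$ both generates $X(\bar G^\circ)$ (by faithfulness of the $\bar G$-action on $V$) and has $0$ in its convex hull (by closedness of $G^\circ\cdot v$), the question is reduced to a purely discrete-convex-geometric problem about finite configurations of lattice points: can every configuration in $\mathbb{Z}^d$ that generates the lattice and has $0$ in its convex hull be thinned to at most $2d$ elements still enjoying both properties? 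I expect this combinatorial question to be the technical crux of Conjecture~\ref{conjecture:tau}, and an affirmative resolution would uniformly recover the finite-group bound (the case $\dim(G^\circ)=0$) and the Dufresne--Jeffries torus bound (the case $A$ trivial) as the two extreme cases.
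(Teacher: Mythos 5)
First, note that the statement you are proving is posed in the paper as an \emph{open conjecture}: the paper gives no proof of it, only the weaker bound of Theorem~\ref{thm:ZB_J}, so there is no argument of the author's to compare yours against. Your overall skeleton is reasonable and correctly mirrors the machinery that proves Theorem~\ref{thm:ZB_J}: the reduction to characteristic zero via Proposition~\ref{prop:difference-closed} and Corollary~\ref{cor:tau=tau}, the bookkeeping $|I|+1+\mathrm{rk}(A)\le t$, the identification of $\bar G/\bar G^\circ$ as a quotient of $A$, and the coset/Helly step inside the finite stabilizer $H=\mathrm{Stab}_{\bar G}(v_I)$ are all in order. But you have not proved the conjecture: everything is funnelled into the selection lemma you isolate in your last paragraph, and you leave that lemma unproved.

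Worse, that selection lemma is false as stated. Already for $d=1$ consider the configuration $\{6,-10,15\}\subset\mathbb{Z}$: it generates $\mathbb{Z}$ (the gcd is $1$) and has $0$ in the relative interior of its convex hull, yet every subset of size at most $2d=2$ generates a proper sublattice (the pairs generate $2\mathbb{Z}$, $3\mathbb{Z}$ and $5\mathbb{Z}$). Concretely, take $G=K^\times$ acting on $K^3$ with weights $6,-10,15$ and $v$ of full support: $G\cdot v$ is Zariski closed with trivial stabilizer, but any $I$ with $|I|\le 2$ for which $G\cdot v_I$ is closed of dimension $1$ must consist of one positive and one negative weight, and then $\mathrm{Stab}_{G}(v_I)$ is $\mu_2$ or $\mu_5$, so your condition (b) is unattainable. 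Since $\mathrm{rk}(A)=0$ here, your Helly step genuinely requires $\kappa(H)\le 1$, i.e.\ $H$ trivial, not merely small, so the failure of (b) cannot be absorbed. (This example does not threaten the conjecture itself, because $n=3=1+\dim(G)+\mathrm{rk}(X(G))$; it only kills the proposed route.) To rescue the idea you would need either a weaker property of $I$ that still bounds $\kappa(\mathrm{Stab}_{\bar G}(v_I))$ by $1+\mathrm{rk}(A)$, or a trade-off in which $|I|$ is taken smaller precisely when the stabilizer of $v_I$ is forced to be larger; your proposal supplies neither, and the example shows that the naive simultaneous Carath\'eodory-plus-lattice-generation statement cannot be the right intermediate lemma.
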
 

%%%%%%%%%%%%%%%%%%%%%%%%%%%%%

\section{Positive characteristic} \label{sec:positive characteristic} 

The following result is stated for torus actions in \cite{dufresne-jeffries:2}, and the proof works for not necessarily connected diagonalizable groups as well: 

\begin{proposition}\label{prop:dufresne-jeffries pos char} \cite[Proposition 6.1]{dufresne-jeffries:2} Assume that $\mathrm{char}(K)=p>0$. For some $M\subseteq \mathcal{B}$, the monomials $\{x^m\mid m\in M\}$ form a separating set in $\mathcal{O}(V)^G$ if and only if there is a non-negative integer $\alpha$ such that $p^{\alpha}\mathcal{B}\subseteq \mathbb{N}_0M$. 
\end{proposition}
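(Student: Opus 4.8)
The plan is to prove the two implications separately, with the reverse direction (separating $\Rightarrow$ existence of $\alpha$) carrying essentially all of the difficulty. For the forward direction I would argue with the Frobenius endomorphism alone. Since $K$ is algebraically closed of characteristic $p$, the map $t\mapsto t^{p^\alpha}$ is a bijection of $K$, in particular injective. The monomials $\{x^b\mid b\in\mathcal B\}$ form a $K$-basis of $\mathcal O(V)^G$ and hence a separating set, so if $v,w$ are separated by some invariant there is a $b\in\mathcal B$ with $x^b(v)\neq x^b(w)$; injectivity of Frobenius then gives $x^{p^\alpha b}(v)=(x^b(v))^{p^\alpha}\neq(x^b(w))^{p^\alpha}=x^{p^\alpha b}(w)$. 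As $p^\alpha b\in\mathbb N_0 M$ by hypothesis, the set $\{x^m\mid m\in\mathbb N_0 M\}$ already separates $v$ and $w$, hence so does $\{x^m\mid m\in M\}$ by Remark~\ref{remark:Sigma M}.

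For the reverse direction I would first localize to supports. By Lemma~\ref{lemma:2} the set $\{x^m\mid m\in M_J\}$ is separating in $\mathcal O(V_J)^G$ for every $J$, and since a non-negative combination of the $m$ cannot cancel coordinates, the condition $p^\alpha\mathcal B\subseteq\mathbb N_0 M$ is equivalent to the family of conditions $p^\alpha\mathcal B_J\subseteq\mathbb N_0 M_J$ over all $J$ (one uniform $\alpha$ suffices because $\mathcal B$ is finitely generated). Fixing $b\in\mathcal B$ and setting $J:=\mathrm{supp}(b)$, it therefore suffices to prove $p^\alpha b\in\mathbb N_0 M_J$; here $b$ has full support in $J$, so Lemma~\ref{lemma:1} both tells me that full-support points of $V_J$ have closed $G$-orbit and produces a full-support element of $\mathbb N_0 M_J$.

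The weaker, group-level membership $p^\alpha b\in\mathbb Z M_J$ I would obtain by adapting the proof of Lemma~\ref{lemma:3} to characteristic $p$. There the algebra $K[\mathbb Z^n/\mathbb Z M]$ was reduced and realized a diagonalizable group isomorphic to $G$; in characteristic $p$ this algebra may have nilpotents, so I would pass to its reduced quotient $K[(\mathbb Z^n/\mathbb Z M_J)/T]$, where $T$ is the $p$-primary torsion. This is $\mathcal O(H)$ for a diagonalizable $H$ with character group the $p$-saturation $\widehat{\mathbb Z M_J}$ of $\mathbb Z M_J$ in $\mathbb Z^n$; since $X(G)$ has no $p$-torsion (the finite factor $A$ has order prime to $p$), the surjection $X(H)\to X(G)$ is well defined. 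Running the orbit argument of Lemma~\ref{lemma:3} with full-support points, which are closed by the previous paragraph, identifies $H$ with $G$, i.e. $\widehat{\mathbb Z M_J}=\mathbb Z\mathcal B_J$ (using a full-support element to see that $\mathbb Z\mathcal B_J$ is the whole relation group on $J$); equivalently $\mathbb Z\mathcal B_J/\mathbb Z M_J$ is a finite $p$-group, which is exactly $p^\alpha b\in p^\alpha\mathcal B_J\subseteq\mathbb Z M_J$.

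The hard part will be upgrading this from the group $\mathbb Z M_J$ to the monoid $\mathbb N_0 M_J$, because $\mathbb N_0 M_J$ need not be difference-closed: the example $M=\{(1,1),(p,0)\}$ (with $G$ trivial) satisfies the group statement on the top support $\{1,2\}$ yet fails it on the sub-support $\{2\}$, and is not separating. I would therefore run an induction on $|J|$, using that $\mathcal B$ is difference-closed (Proposition~\ref{prop:difference-closed}) so that the proper faces of $\mathrm{cone}(\mathcal B_J)$ are exactly the smaller supports $J'\subsetneq J$. Writing $p^\alpha b=\mu-\nu$ with $\mu,\nu\in\mathbb N_0 M_J$, I expect the interior contributions to re-enter $\mathbb N_0 M_J$ after multiplication by a further $p$-power (the normalization of a finitely generated monoid is module-finite, hence has a nonempty conductor, into which Frobenius pushes interior elements), while the boundary contributions live on proper faces and are absorbed by the inductive hypothesis applied to the sub-supports $J'$, for which the separating property of $M_{J'}$ supplies the same group-level input. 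Making the conductor and numerical estimates uniform with $p$-power multipliers is the delicate point I expect to consume most of the effort.
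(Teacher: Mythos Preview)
The paper does not give its own proof of this proposition: it is quoted from \cite[Proposition~6.1]{dufresne-jeffries:2} with only the remark that the argument there extends from tori to general diagonalizable groups. There is therefore nothing in the paper to compare your attempt against directly.

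That said, a few comments on your proposal relative to what the paper \emph{does} contain. Your easy direction is correct and is essentially Lemma~\ref{lemma:p4} specialised to the monoid hypothesis. For the hard direction, your plan to first establish the group-level condition $p^{\alpha_J}\mathcal B_J\subseteq\mathbb Z M_J$ for every $J$ is sound; the paper independently recovers exactly this much---though by a different route, via the purely-inseparable-closure Lemma~\ref{lemma:constant along fibers} in Section~\ref{sec:generalization}---and records it as condition~(3) of Theorem~\ref{thm:pos char main}. What Proposition~\ref{prop:dufresne-jeffries pos char} adds on top of the paper's self-contained arguments is precisely the upgrade from $\mathbb Z M_J$ to $\mathbb N_0 M$, which you correctly isolate as the crux.

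Your induction on $|J|$ with a conductor argument can be made rigorous, but the sketch as written stops short of the two facts that make it go through. First, when $\dim_{\mathbb R}\mathbb R\mathcal B_J\ge 2$ every extreme ray of $C_{\mathcal B_J}=\mathbb R\mathcal B_J\cap\mathbb R_{\ge 0}^J$ lies on a coordinate face, hence has support properly contained in $J$ and is covered by the inductive hypothesis; this forces $C_{M_J}=C_{\mathcal B_J}$. Second, a full-support atom $b$ then lies in the relative interior of this common cone, so $p^{\alpha_J}b\in\mathbb Z M_J\cap C_{M_J}=\widetilde{\mathbb N_0 M_J}$ (the normalization); since the numerical semigroup $\{N:Np^{\alpha_J}b\in\mathbb N_0 M_J\}$ has $\gcd$ equal to $1$ (it contains two coprime integers, obtained from an interior rational representation and from the $\mathbb Z M_J$-membership), it is cofinite and in particular contains all large $p$-powers. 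A shorter alternative, likely closer to the cited source, is to observe that once $M$ is separating the graded inclusion $K[x^m:m\in M]\subseteq\mathcal O(V)^G$ is module-finite (the fibre over the irrelevant ideal is a single reduced point, then apply graded Nakayama), whence $\mathrm{Spec}\,\mathcal O(V)^G\to\mathrm{Spec}\,K[\mathbb N_0 M]$ is a finite bijection of reduced affine $\mathbb F_p$-schemes and therefore a universal homeomorphism through which a power of Frobenius factors.
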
 

\begin{definition}\label{def:tau_p}
For a prime $p$ and a submonoid $B$ of the additive monoid $\mathbb{N}_0^n$ 
define $\tau_p(B)$ as the minimal non-negative integer $t$ such 
that for any $I\subseteq \{1,\dots,n\}$, there exists a non-negative integer $\alpha_I$ for which  
$p^{\alpha_I}B_I$ is contained in the abelian subgroup of $\mathbb{Z}^n$ generated by $\bigcup_{J\subseteq I,\  |J|\le t}B_J$. 
\end{definition}

Now we are in position to state an extension of Proposition~\ref{prop:dufresne-jeffries pos char}: 

\begin{theorem}\label{thm:pos char main} 
Assume that $\mathrm{char}(K)=p>0$. 
Then the  following conditions are equivalent for a subset $M\subseteq \mathcal{B}(\chi_1,\dots,\chi_n)$:  
\begin{enumerate} 
\item The monomials $\{x^m\mid m\in M\}$ form a separating set in $\mathcal{O}(V)^G$. 
\item There is a non-negative integer $\alpha$ such that $p^{\alpha}\mathcal{B}\subseteq \mathbb{N}_0M$. 
\item For any subset $J\subseteq \{1,\dots,n\}$, the factor group $\mathbb{Z}\mathcal{B}_J/\mathbb{Z}M_J$ is a finite $p$-group. 
\item For any subset $J\subseteq \{1,\dots,n\}$ with $|J|\le \tau_p(\mathcal{B})$ there exists a non-negative integer $\alpha_J$ such that 
$p^{\alpha_J}\mathcal{A}_J$ is contained in $\mathbb{Z}M_J$. 
\end{enumerate} 
\end{theorem}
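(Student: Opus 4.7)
The backbone is Proposition~\ref{prop:dufresne-jeffries pos char}, which supplies $(1)\Leftrightarrow(2)$ directly, so the task reduces to showing $(2)\Rightarrow(3)\Rightarrow(1)$ and $(3)\Leftrightarrow(4)$. The implication $(2)\Rightarrow(3)$ is a short support argument: fixing $b\in\mathcal{B}_J$ and writing $p^\alpha b=\sum_ic_im_i$ with $c_i\in\mathbb{N}_0$ and $m_i\in M$, the inclusion $\mathbb{N}_0M\subseteq\mathbb{N}_0^n$ forces $\mathrm{supp}(m_i)\subseteq\mathrm{supp}(p^\alpha b)\subseteq J$ whenever $c_i>0$, so $p^\alpha\mathbb{Z}\mathcal{B}_J\subseteq\mathbb{Z}M_J$, and finite generation of $\mathbb{Z}\mathcal{B}_J$ makes the quotient a finite $p$-group.

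The central step is $(3)\Rightarrow(1)$: given $v,w\in V$ with distinct Zariski closed orbits, I aim to locate some $m\in M$ with $x^m(v)\neq x^m(w)$. If $\mathrm{supp}(v)\neq\mathrm{supp}(w)$, then after possibly swapping $v$ and $w$ I may choose $i\in\mathrm{supp}(w)\setminus\mathrm{supp}(v)$. Applying Lemma~\ref{lemma:1} with the separating set $\mathcal{A}$ (which is available in arbitrary characteristic), closedness of $G\cdot w$ yields $b'\in\mathcal{B}$ with $\mathrm{supp}(b')=\mathrm{supp}(w)$; setting $J:=\mathrm{supp}(w)$, condition $(3)$ produces $\alpha$ with $p^\alpha b'=u-q$ where $u,q\in\mathbb{N}_0M_J$. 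The equality $u=p^\alpha b'+q$ forces $\mathrm{supp}(u)\supseteq\mathrm{supp}(w)\ni i$, so writing $u=\sum c_jm_j$ with $c_j>0$ and $m_j\in M_J$ produces some $m_j$ with $i\in\mathrm{supp}(m_j)\subseteq\mathrm{supp}(w)$, and then $x^{m_j}(v)=0\neq x^{m_j}(w)$. In the remaining case $\mathrm{supp}(v)=\mathrm{supp}(w)=:J'$, fix $b\in\mathcal{B}$ with $x^b(v)\neq x^b(w)$; necessarily $\mathrm{supp}(b)\subseteq J'$. Applying $(3)$ with $J:=\mathrm{supp}(b)$ gives $p^\alpha b=u-q$ with $u,q\in\mathbb{N}_0M_J$, and all four values $x^u(v),x^q(v),x^u(w),x^q(w)$ are nonzero since their supports lie in $J\subseteq J'$. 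Since $K$ is perfect, $x^b(v)\neq x^b(w)$ yields $(x^b(v))^{p^\alpha}\neq(x^b(w))^{p^\alpha}$, and the identity $(x^b(z))^{p^\alpha}=x^u(z)/x^q(z)$ then forces one of $x^u,x^q$ to separate $v$ and $w$; splitting into factors from $M$ supplies the required $m$.

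The equivalence $(3)\Leftrightarrow(4)$ is purely monoid-theoretic. For $(3)\Rightarrow(4)$, for each $|J|\leq\tau_p(\mathcal{B})$ pick $\alpha_J$ annihilating the finite $p$-group $\mathbb{Z}\mathcal{B}_J/\mathbb{Z}M_J$, so that $p^{\alpha_J}\mathcal{A}_J\subseteq\mathbb{Z}M_J$. For $(4)\Rightarrow(3)$, fix $I\subseteq\{1,\ldots,n\}$ and set $\beta:=\max\{\alpha_J:J\subseteq I,\ |J|\leq\tau_p(\mathcal{B})\}$; since $\mathcal{A}_J$ generates $\mathbb{Z}\mathcal{B}_J$ as a group, $(4)$ gives $p^\beta\mathbb{Z}\mathcal{B}_J\subseteq\mathbb{Z}M_J\subseteq\mathbb{Z}M_I$ for each such $J$, and Definition~\ref{def:tau_p} then yields $p^{\alpha_I+\beta}\mathcal{B}_I\subseteq\mathbb{Z}M_I$, making $\mathbb{Z}\mathcal{B}_I/\mathbb{Z}M_I$ a finite $p$-group by finite generation.

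The main obstacle I anticipate is the support bookkeeping in $(3)\Rightarrow(1)$: one must arrange that the decomposition $p^\alpha b=u-q$ lives inside $\mathbb{Z}M_J$ for precisely the $J$ needed to guarantee nonvanishing of the relevant evaluations, and one must invoke the closed-orbit-implies-full-support direction of Lemma~\ref{lemma:1} as an unconditional statement (by taking $\mathcal{A}$ itself as the separating set), rather than circularly appealing to the $M$ whose separating property is being proved.
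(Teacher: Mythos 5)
Your proof is correct and follows essentially the same route as the paper's: $(1)\Leftrightarrow(2)$ is quoted from Proposition~\ref{prop:dufresne-jeffries pos char}, $(2)\Rightarrow(3)$ is a support-plus-finite-generation argument, $(3)\Rightarrow(1)$ is the $p^{\alpha}$-th-power ratio trick that constitutes the paper's Lemma~\ref{lemma:p4}, and $(3)\Leftrightarrow(4)$ is handled exactly as in the paper via Definition~\ref{def:tau_p}. The only cosmetic difference is that for $(3)\Rightarrow(1)$ you reduce to closed orbits and invoke Lemma~\ref{lemma:1} (with $M=\mathcal{A}$) to treat the unequal-support case, whereas Lemma~\ref{lemma:p4} avoids this detour by noting that the positive part $u$ of $p^{\alpha}b=u-q$ automatically has support exactly $\mathrm{supp}(b)$, so $x^{u}$ already detects a support mismatch.
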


Lemma~\ref{lemma:2}, Lemma~\ref{lemma:1}, Proposition~\ref{prop:diagonalizable Helly}, 
Proposition~\ref{prop:delta}, Lemma~\ref{lemma:tau}  are characteristic free. The statements and proofs of Lemma~\ref{lemma:3} and Lemma~\ref{lemma:4} and Corollary~\ref{cor:tau=tau} have to be modified. 

\begin{lemma} \label{lemma:p4} 
Suppose that $\mathrm{char}(K)=p>0$, and 
let $M$ be a subset of $\mathcal{B}$  such that 
for any $b\in \mathcal{B}$ there is a non-negative integer $\alpha$ such that 
$p^{\alpha}b\in  \mathbb{Z}M_J$, where $J=\mathrm{supp}(b)$.  Then 
the monomials $\{x^m\mid m\in M\}$ form a separating set in $\mathcal{O}(V)^G$.
\end{lemma}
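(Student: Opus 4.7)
The plan is to mimic the proof of Lemma~\ref{lemma:4} almost verbatim, interpolating a single Frobenius step at the start to pay for the $p^\alpha$-factor tolerated by the hypothesis. By Remark~\ref{remark:Sigma M}, it suffices to show that for every $v,w\in V$ and every $b\in\mathcal{B}$ with $x^b(v)\neq x^b(w)$, there exists $m\in \mathbb{N}_0 M$ with $x^m(v)\neq x^m(w)$. First I would set $J:=\mathrm{supp}(b)$ and use the hypothesis to choose $\alpha\ge 0$ with $p^\alpha b\in\mathbb{Z}M_J$. Since the $p^\alpha$-power map on $K$ is injective (it is the $\alpha$-fold iterate of the Frobenius on a field of characteristic $p$) and $x^{p^\alpha b}(u)=x^b(u)^{p^\alpha}$ for every $u\in V$, the inequality $x^b(v)\neq x^b(w)$ upgrades to $x^{p^\alpha b}(v)\neq x^{p^\alpha b}(w)$; note that $\mathrm{supp}(p^\alpha b)=J$ is unchanged.

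Next I would write $p^\alpha b=m-q$ with $m,q\in\mathbb{N}_0 M_J$ and verify, exactly as in the proof of Lemma~\ref{lemma:4}, the support identities $\mathrm{supp}(q)\subseteq\mathrm{supp}(m)=J$. The inclusion $\mathrm{supp}(q)\subseteq\mathrm{supp}(m)$ comes from $m-q=p^\alpha b\in\mathbb{N}_0^n$, while the equality combines $\mathrm{supp}(m)\subseteq J$ (since $m\in\mathbb{N}_0 M_J$) with $J=\mathrm{supp}(p^\alpha b)\subseteq\mathrm{supp}(m)$ (the latter following from $b_i>0$ forcing $m_i>q_i\ge 0$).

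The remainder of the argument now runs parallel to Lemma~\ref{lemma:4}. Since $x^{p^\alpha b}$ vanishes on any vector whose support does not contain $J$, at least one of $\mathrm{supp}(v),\mathrm{supp}(w)$ must contain $J$; without loss of generality $\mathrm{supp}(v)\supseteq J$. If $\mathrm{supp}(w)\not\supseteq J$, then $x^m(v)\neq 0=x^m(w)$ because $\mathrm{supp}(m)=J\subseteq\mathrm{supp}(v)$, and $m\in\mathbb{N}_0 M$ already separates $v$ from $w$. Otherwise $J\subseteq\mathrm{supp}(w)$ as well, so $x^{p^\alpha b}(u)=x^m(u)/x^q(u)$ for both $u\in\{v,w\}$; the inequality of the two sides then forces $x^m(v)\neq x^m(w)$ or $x^q(v)\neq x^q(w)$, and either $m$ or $q$, both elements of $\mathbb{N}_0 M$, serves as the desired separating exponent.

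I do not foresee any genuine obstacle. The only novelty beyond the characteristic-zero Lemma~\ref{lemma:4} is the Frobenius injectivity used to promote $b$ to $p^\alpha b$ without destroying the separating behaviour; the support bookkeeping and the dichotomy according to whether $J\subseteq\mathrm{supp}(w)$ are structurally identical, and no further input from the theory of diagonalizable groups (in particular, no appeal to faithfulness or to the Hopf-algebraic reconstruction of $G$ used in Lemma~\ref{lemma:3}) is required for this direction.
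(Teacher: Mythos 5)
Your proof is correct and follows essentially the same route as the paper's: reduce via Remark~\ref{remark:Sigma M}, use injectivity of the Frobenius to pass from $x^b$ to $x^{p^\alpha b}$, write $p^\alpha b=m-q$ with $m,q\in\mathbb{N}_0M_J$, and run the same support bookkeeping and case split as in Lemma~\ref{lemma:4}. No discrepancies to report.
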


\begin{proof}  Take any $b\in \mathcal{B}$, $v,w\in V$ such that $x^b(v)\neq x^b(w)$. 
What we have to show is that $x^m(v)\neq x^m(w)$ for some $m\in \mathbb{N}_0M$ 
(see Remark~\ref{remark:Sigma M}). Set $J:=\mathrm{supp}(b)$. If none of $\mathrm{supp}(v)$ and $\mathrm{supp}(w)$ contains $J$, then $x^b(v)=0=x^b(w)$ is a contradiction. Hence say $\mathrm{supp}(v)\supseteq J$. By assumption there exist $m,q\in \mathbb{N}_0M_J$ and a non-negative integer $\alpha$ with $p^{\alpha}b=m-q$. Necessarily we have $\mathrm{supp}(q)\subseteq \mathrm{supp}(m)=J$. If $\mathrm{supp}(w)$ does not contain $J$, then $x^m(w)=0$ and $x^m(v)\neq 0$, and we are done. Otherwise both of $\mathrm{supp}(v)$ and $\mathrm{supp}(w)$ contain $J$, hence 
$x^q(v)\neq 0$ and $x^q(w)\neq 0$, and 
\[\frac{x^m(v)}{x^q(v)}=x^{bp^{\alpha}}(v)\neq x^{bp^{\alpha}}(w)=\frac{x^m(w)}{x^q(w)}\] 
(note that  by the assumption on the characteristic $x^b(v)\neq x^b(w)$ implies 
$x^{bp^{\alpha}}(v)\neq x^{bp^{\alpha}}(w)$). 
If follows that 
$x^m(v)\neq x^m(w)$ or $x^q(v)\neq x^q(w)$. 
\end{proof}

\begin{proofof}{Theorem~\ref{thm:pos char main}}  
The equivalence of (1) and  (2) is the content of Proposition~\ref{prop:dufresne-jeffries pos char}. 
Assume next that (2) and hence (1) hold. 
By Lemma~\ref{lemma:2} we get that for any $J\subseteq \{1,\dots,n\}$, 
$\{x^m\mid m\in M_J\}$ is a separating set in $\mathcal{O}(V_J)^G$, from which by 
the implication (1) $\Longrightarrow$ (2) we conclude that $p^{\alpha_J}\mathcal{B}_J\subseteq \mathbb{N}_0M_J$ for some $\alpha_J\in \mathbb{N}_0$. 
Since the monoid $\mathcal{B}_J$ is finitely generated, it follows that 
$\mathbb{Z}\mathcal{B}_J/\mathbb{Z}M_J$ is a finite $p$-group. Thus we showed the implication (2) $\Longrightarrow$ (3). 
The implication (3) $\Longrightarrow$  (1) is a consequence of Lemma~\ref{lemma:p4}. 
The implication (3) $\Longrightarrow$  (4) is trivial. It remains to prove the implication (4) $\Longrightarrow$  (3). 
Assume that (4) holds for $M$.  
Take an arbitrary finite subset $J\subseteq \{1,\dots,n\}$. By Definition~\ref{def:tau_p} there exists a non-negative integer 
$\alpha_J$ such that $p^{\alpha_J}\mathcal{B}_J$ is contained in the subgroup of $\mathbb{Z}^n$ generated by $\bigcup_{I\subseteq J,\ |I|\le \tau_p(\mathcal{B})}\mathcal{A}_I$. By assumption on $M$ for all $I\subseteq J$ with $|I|\le \tau_p(B)$ we have 
$p^{\alpha_I}\mathcal{A}_I\subseteq \mathbb{Z}M_I$. Setting $\alpha:=\alpha_J+\max\{\alpha_I\colon I\subseteq J,\ |I|\le\tau_p(\mathcal{B})\}$ we have $p^{\alpha}\mathcal{B}_J
\subseteq \mathbb{Z}M_J$, so (3) holds. 
\end{proofof}

\begin{remark}\label{remark:tau_p<tau}
Condition (4) in Theorem~\ref{thm:pos char main} can be effectively used thanks to 
Theorem~\ref{thm:ZB_J} and the obvious inequality 
\[\tau_p(\mathcal{B})\le\tau(\mathcal{B}).\]  
\end{remark} 

For completeness of the picture we record 
the following corollary of Theorem~\ref{thm:pos char main}: 

\begin{corollary} \label{cor:tau_p=tau} 
Assume that $\mathrm{char}(K)=p>0$. Then we have 
$\tau(G,V)=\tau_p(\mathcal{B})$. 
\end{corollary}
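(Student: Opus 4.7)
The plan is to mimic the proof of Corollary~\ref{cor:tau=tau}, substituting Theorem~\ref{thm:pos char main} for Theorem~\ref{thm:main} throughout. The key observation that makes everything work is that atoms generate the monoid, so for each $J$ we have the equality $\mathbb{Z}\mathcal{A}_J = \mathbb{Z}\mathcal{B}_J$; consequently the lattice $\mathbb{Z}\bigcup_{J\subseteq I,\,|J|\le t} \mathcal{B}_J$ appearing in Definition~\ref{def:tau_p} coincides with $\mathbb{Z}\bigcup_{J\subseteq I,\,|J|\le t} \mathcal{A}_J$.

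For the inequality $\tau(G,V)\le \tau_p(\mathcal{B})$, I would set
\[M:=\bigcup_{J\subseteq \{1,\dots,n\},\,|J|\le \tau_p(\mathcal{B})}\mathcal{A}_J.\]
By Definition~\ref{def:tau_p} and the observation above, for every $I\subseteq \{1,\dots,n\}$ there is a non-negative integer $\alpha_I$ with $p^{\alpha_I}\mathcal{B}_I\subseteq \mathbb{Z}M_I$; in particular $p^{\alpha_I}\mathcal{A}_I\subseteq \mathbb{Z}M_I$, which is condition (4) of Theorem~\ref{thm:pos char main}. The implication (4)$\Longrightarrow$(1) of that theorem then shows that $\{x^m\mid m\in M\}$ is a separating set in $\mathcal{O}(V)^G$. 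Since the elements of $M$ all have support contained in some $J$ with $|J|\le \tau_p(\mathcal{B})$, they lie in $\bigcup_{|J|\le \tau_p(\mathcal{B})}\mathcal{O}(V_J)^G$, establishing the desired bound.

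For the reverse inequality $\tau_p(\mathcal{B})\le\tau(G,V)$, I would set $M:=\bigcup_{|J|\le \tau(G,V)}\mathcal{A}_J$. By Definition~\ref{def:tau(G,V)} the algebras $\mathcal{O}(V_J)^G$ with $|J|\le \tau(G,V)$ together form a separating set; by Proposition~\ref{prop:monoid generators} each such algebra is generated by $\{x^m\mid m\in \mathcal{A}_J\}$, so $\{x^m\mid m\in M\}$ is separating. The implication (1)$\Longrightarrow$(3) of Theorem~\ref{thm:pos char main} now gives, for every $I$, a non-negative integer $\alpha_I$ with $p^{\alpha_I}\mathcal{B}_I\subseteq \mathbb{Z}M_I$. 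Since $M_I=\bigcup_{J\subseteq I,\,|J|\le \tau(G,V)}\mathcal{A}_J$ generates the same subgroup of $\mathbb{Z}^n$ as $\bigcup_{J\subseteq I,\,|J|\le \tau(G,V)}\mathcal{B}_J$, this verifies the defining property of $\tau_p(\mathcal{B})$ with bound $\tau(G,V)$.

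Neither step has any serious obstacle, as the heavy lifting is already done in Theorem~\ref{thm:pos char main}; the only mild subtlety to check is the identification $\mathbb{Z}\mathcal{A}_J=\mathbb{Z}\mathcal{B}_J$ used to translate between the definition of $\tau_p(\mathcal{B})$ (phrased in terms of $\mathcal{B}_J$) and condition (4) of Theorem~\ref{thm:pos char main} (phrased in terms of $\mathcal{A}_J$).
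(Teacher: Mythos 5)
Your proof is correct and follows essentially the same route as the paper: both directions instantiate $M$ as the union of the atom sets $\mathcal{A}_J$ over supports of bounded size and then invoke the equivalences of Theorem~\ref{thm:pos char main} (you route through condition (4) where the paper uses (3), which is immaterial since the theorem asserts all four conditions are equivalent). Your explicit observation that $\mathbb{Z}\mathcal{A}_J=\mathbb{Z}\mathcal{B}_J$, needed to translate between Definition~\ref{def:tau_p} (phrased via $\mathcal{B}_J$) and the theorem's conditions (phrased via $\mathcal{A}_J$), is a point the paper leaves implicit but is exactly the right justification.
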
 

\begin{proof} 
Set  $M:=\bigcup_{J\subseteq \{1,\dots,n\}\colon |J|\le \tau_p(\mathcal{B})} \mathcal{A}_J$. Then by Definition~\ref{def:tau_p} 
 for all $I\subseteq \{1,\dots,n\}$ we have that $p^{\alpha_I}\mathcal{B}_I\subseteq \mathbb{Z}M_I$ for some non-negative integer $\alpha_I$, 
 therefore by the implication (3) $\Longrightarrow$ (1) of Theorem~\ref{thm:pos char main} we infer 
 that $\{x^m\mid m\in M\}$ is a separating set in $\mathcal{O}(V)^G$. 
 As $\{x^m\mid m\in M\}\subseteq \bigcup_{J\subseteq \{1,\dots,n\}\colon |J|\le \tau_p(\mathcal{B})}\mathcal{O}(V_J)^G$, the latter is also a separating set in  $\mathcal{O}(V)^G$. 
 This shows the inequality $\tau(G,V)\le \tau_p(\mathcal{B})$. 
 
 In order to prove the reverse inequality set 
$M:=\bigcup_{J\subseteq \{1,\dots,n\}\colon |J|\le \tau(G,V)} \mathcal{A}_J$. 
 Then  by Definition~\ref{def:tau(G,V)} and by Proposition~\ref{prop:monoid generators} we have that 
 $\{x^m\mid m\in M\}$ is a separating set in $\mathcal{O}(V)^G$. 
 By the implication (1) $\Longrightarrow$ (3) of Theorem~\ref{thm:pos char main} we infer 
 that for any subset $I\subseteq \{1,\dots,n\}$ there exists a non-negative integer $\alpha_I$ with  $p^{\alpha_I}\mathcal{A}_I\subseteq \mathbb{Z}M_I$. 
 Note that $M_I=\bigcup_{J\subseteq I\colon |J|\le \tau(G,V)} \mathcal{A}_J$. 
 This shows the inequality $\tau_p(\mathcal{B})\le\tau(G,V)$. 
\end{proof}

%%%%%%%%%%%%%%%%%%%%%%%
\section{Degree bounds and examples} \label{sec:examples} 

The polynomial algebra $\mathcal{O}(V)$ is graded in the standard way (the degree $1$ component is spanned by the variables $x_1,\dots,x_n$). The subalgebra $\mathcal{O}(V)^G$ is spanned by homogeneous elements, so it inherits the grading. We shall denote by $\beta(G,V)$ the minimal non-negative integer $d$ such that 
the algebra $\mathcal{O}(V)^G$ is generated by its homogeneous components of degree at most $d$. 
Furthermore, we shall denote by $\beta_{\mathrm{sep}}(G,V)$ the minimal non-negative integer $d$ such that the homogeneus elements of degree at most $d$ constitute a separating set of $\mathcal{O}(V)^G$. 
By the \emph{length} of $m\in \mathbb{N}_0^n$ we mean $|m|=\sum_{i=1}^nm_i$. As an immediate consequence of Proposition~\ref{prop:monoid generators} we get the following: 

\begin{corollary}\label{cor:beta} 
We have the equality  
\[\beta(G,V)=\max\{|m|\mid m\in  \mathcal{A}(\chi_1,\dots,\chi_n)\}.\]  
\end{corollary}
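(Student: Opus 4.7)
The plan is to reduce the statement directly to Proposition~\ref{prop:monoid generators} by using the fact that $\mathcal{O}(V)^G$ has a homogeneous $K$-basis consisting of invariant monomials, with $\deg(x^m)=|m|$ for $m\in\mathcal{B}$. First I would observe that, for any non-negative integer $d$, the subspace of $\mathcal{O}(V)^G$ spanned by homogeneous elements of degree at most $d$ coincides with the span of $\{x^m\mid m\in\mathcal{B},\ |m|\le d\}$. Consequently, $\mathcal{O}(V)^G$ is generated as a $K$-algebra by its components of degree $\le d$ if and only if it is generated by the set $\{x^m\mid m\in\mathcal{B},\ |m|\le d\}$.

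Applying Proposition~\ref{prop:monoid generators} with $M:=\{m\in\mathcal{B}\mid |m|\le d\}$, the latter condition is equivalent to $\mathcal{A}(\chi_1,\dots,\chi_n)\subseteq M$, i.e.\ to $|m|\le d$ for every atom $m$. Taking $d=\beta(G,V)$ to be the least such integer yields $\beta(G,V)=\max\{|m|\mid m\in\mathcal{A}(\chi_1,\dots,\chi_n)\}$, where we use Remark~\ref{remark:finitely generated} to know that $\mathcal{A}(\chi_1,\dots,\chi_n)$ is finite so the maximum is attained. There is no substantive obstacle here; the only point to verify carefully is the equivalence between being generated by homogeneous elements of degree $\le d$ and being generated by the monomial set of degree $\le d$, which is immediate from the monomial basis of $\mathcal{O}(V)^G$.
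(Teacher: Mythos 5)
Your proof is correct and is exactly the argument the paper intends: the paper gives no written proof, declaring the corollary an immediate consequence of Proposition~\ref{prop:monoid generators}, and your route through the monomial basis, the choice $M=\{m\in\mathcal{B}\mid |m|\le d\}$, and the finiteness of $\mathcal{A}(\chi_1,\dots,\chi_n)$ from Remark~\ref{remark:finitely generated} fills in precisely the intended details.
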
 

Our Theorem~\ref{thm:main} and Theorem~\ref{thm:pos char main} yield an analogous characterisation of 
$\beta_{\mathrm{sep}}(G,V)$ in terms of the monoid $\mathcal{B}(\chi_1,\dots,\chi_n)$. 

\begin{corollary}\label{cor:beta sep} 
Assume that $\mathrm{char}(K)=0$. Then we have the equality 
\begin{align*} 
\beta_{\mathrm{sep}}(G,V)&=
\min\{d\in\mathbb{N}_0\mid \forall J\subseteq\{1,\dots,n\}:\ \mathcal{A}_J\subseteq \mathbb{Z}\{m\in\mathcal{A}_J\mid 
|m|\le d\}\}
\\ &=
\min\{d\in \mathbb{N}_0\mid \forall J\subseteq \{1,\dots,n\},\ |J|\le \tau(\mathcal{B}): 
\\ &\qquad \qquad \qquad \qquad 
 \mathcal{A}_J\subseteq\mathbb{Z}\{m\in \mathcal{A}_J\mid |m|\le d\}\}. 
\end{align*}
\end{corollary}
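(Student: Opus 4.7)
\medskip

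The plan is to reduce everything to Theorem~\ref{thm:main} applied to the set of exponent vectors of total length at most $d$. First I would observe that the space of homogeneous invariants of degree $\le d$ is the $K$-linear span of $\{x^m\mid m\in\mathcal{B},\ |m|\le d\}$, because $\mathcal{O}(V)^G$ has the monomial basis $\{x^m\mid m\in\mathcal{B}\}$ and this basis is homogeneous with $\deg(x^m)=|m|$. Since a subset $S\subseteq \mathcal{O}(V)^G$ is separating precisely when its $K$-linear span is separating (a standard fact: if $g=\sum c_if_i$ and every $f_i$ fails to separate a pair $(v,w)$, then so does $g$), the homogeneous elements of degree $\le d$ form a separating set if and only if the set of monomials $\{x^m\mid m\in \mathcal{B},\ |m|\le d\}$ does.

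Next I would apply Theorem~\ref{thm:main} with $M:=\{m\in\mathcal{B}\mid |m|\le d\}$ to conclude that this monomial set is separating if and only if $\mathcal{A}_J\subseteq \mathbb{Z}\{m\in \mathcal{B}_J\mid |m|\le d\}$ for every $J\subseteq\{1,\dots,n\}$. The small technical step is to replace $\mathcal{B}_J$ by $\mathcal{A}_J$ inside this $\mathbb{Z}$-span: given $m\in\mathcal{B}_J$ with $|m|\le d$, writing $m=a_1+\cdots+a_k$ with $a_i\in\mathcal{A}_J$ (possible because $\mathcal{B}_J$ is generated by $\mathcal{A}_J$) forces $|a_i|\le |m|\le d$, so each summand lies in $\{m'\in\mathcal{A}_J\mid |m'|\le d\}$. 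Hence
\[\mathbb{Z}\{m\in \mathcal{B}_J\mid |m|\le d\}=\mathbb{Z}\{m\in \mathcal{A}_J\mid |m|\le d\}.\]
Taking the minimum over all $d$ for which the resulting condition holds gives the first asserted equality for $\beta_{\mathrm{sep}}(G,V)$.

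For the second equality, I would invoke Remark~\ref{remark:2'}: condition~(2) of Theorem~\ref{thm:main} is equivalent to its restriction (2$'$) to subsets $J$ with $|J|\le \tau(\mathcal{B})$. In particular, the predicate ``$\mathcal{A}_J\subseteq\mathbb{Z}\{m\in\mathcal{A}_J\mid |m|\le d\}$ for every $J$'' is equivalent to the same predicate restricted to $|J|\le \tau(\mathcal{B})$ (for the nontrivial direction, one applies the equivalence to the set $M:=\{m\in\mathcal{A}\mid |m|\le d\}$, noting that $\mathcal{A}_J\subseteq\mathcal{A}$ so $M_J=\{m\in\mathcal{A}_J\mid |m|\le d\}$). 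Consequently the two minima coincide. The main obstacle, if any, is this last reduction from arbitrary $J$ to $|J|\le\tau(\mathcal{B})$, but it is essentially packaged already in Remark~\ref{remark:2'} together with the observation $\mathcal{A}_J\subseteq \mathcal{A}$; the rest of the argument is formal.
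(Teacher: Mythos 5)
Your argument is correct and is exactly the derivation the paper intends: Corollary~\ref{cor:beta sep} is stated there without proof as a direct consequence of Theorem~\ref{thm:main} together with Remark~\ref{remark:2'}, which is the reduction you carry out. The two small points you supply explicitly --- that separating by a set is equivalent to separating by its $K$-linear span, and that the atoms occurring in a factorization of an element of length at most $d$ again have length at most $d$, so $\mathbb{Z}\{m\in\mathcal{B}_J\mid |m|\le d\}=\mathbb{Z}\{m\in\mathcal{A}_J\mid |m|\le d\}$ --- are both valid and are precisely what the paper leaves to the reader.
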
 

\begin{corollary}\label{cor:char p beta sep} 
Assume that $\mathrm{char}(K)=p>0$. Then we have the equality 
\begin{align*} 
\beta_{\mathrm{sep}}(G,V)=
\min\{d\in\mathbb{N}_0\mid &\forall J\subseteq\{1,\dots,n\}\  \exists \alpha_J\in \mathbb{N}_0:\ \\ &p^{\alpha_J}
\mathcal{A}_J\subseteq \mathbb{Z}\{m\in\mathcal{A}_J\mid 
|m|\le d\}\}
\\= \min\{d\in\mathbb{N}_0\mid &\forall J\subseteq\{1,\dots,n\},\ |J|\le\tau_p(\mathcal{B}) 
\  \exists \alpha_J\in \mathbb{N}_0:\ \\ &p^{\alpha_J}
\mathcal{A}_J\subseteq \mathbb{Z}\{m\in\mathcal{A}_J\mid 
|m|\le d\}\}
\end{align*}
\end{corollary}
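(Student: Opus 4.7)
The plan is to apply Theorem~\ref{thm:pos char main} directly to the set $M_d := \{m \in \mathcal{B} \mid |m| \le d\}$. Since $\mathcal{O}(V)^G$ is graded and spanned by the invariant monomials $\{x^m \mid m \in \mathcal{B}\}$, a collection of homogeneous invariants of degree at most $d$ is separating if and only if the invariant monomials of degree at most $d$ are separating; consequently $\beta_{\mathrm{sep}}(G,V)$ equals the least $d \in \mathbb{N}_0$ for which $\{x^m \mid m \in M_d\}$ is a separating set in $\mathcal{O}(V)^G$.

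The key observation bridging the two formulations is the identity
\[ \mathbb{Z}(M_d)_J = \mathbb{Z}\{m \in \mathcal{A}_J \mid |m| \le d\} \]
valid for every $J \subseteq \{1,\dots,n\}$. One inclusion is clear. For the other, any element of $\mathcal{B}_J$ of weight at most $d$ is a nonnegative integer combination of atoms in $\mathcal{A}_J$ each of weight at most $d$, hence belongs to the right-hand side.

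Combined with the equivalence $(1) \Longleftrightarrow (3)$ of Theorem~\ref{thm:pos char main} applied to $M = M_d$, this yields the first equality: the separating property for $\{x^m \mid m \in M_d\}$ is equivalent to each factor group $\mathbb{Z}\mathcal{B}_J / \mathbb{Z}(M_d)_J$ being a finite $p$-group, and because $\mathcal{A}_J$ is a finite generating set of the abelian group $\mathbb{Z}\mathcal{B}_J$, this in turn holds precisely when there exists $\alpha_J \in \mathbb{N}_0$ with $p^{\alpha_J}\mathcal{A}_J \subseteq \mathbb{Z}(M_d)_J$. The second equality is obtained identically by invoking the equivalence $(1) \Longleftrightarrow (4)$ of Theorem~\ref{thm:pos char main} in place of $(1) \Longleftrightarrow (3)$, which restricts the range of $J$ to subsets of cardinality at most $\tau_p(\mathcal{B})$.

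Since the corollary is essentially a degree-graded reformulation of Theorem~\ref{thm:pos char main}, I do not anticipate any genuine obstacle; the only mildly non-trivial step is the identity above, which lets us replace the index set $M_d$, consisting of all invariant exponent vectors of bounded weight, by the subset of atoms of bounded weight appearing in the statement of the corollary.
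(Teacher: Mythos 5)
Your proposal is correct and is exactly the intended derivation: the paper states this corollary as an immediate consequence of Theorem~\ref{thm:pos char main}, and your argument supplies the two routine bridging facts (that $\beta_{\mathrm{sep}}$ is attained on the invariant monomials of bounded degree, and that $\mathbb{Z}(M_d)_J=\mathbb{Z}\{m\in\mathcal{A}_J\mid |m|\le d\}$ because $\mathcal{B}_J$ is atomic with atoms $\mathcal{A}_J$ of non-increasing weight under decomposition). No gaps.
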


An $n$-dimensional  representation of an $s$-dimensional torus $(K^\times)^s=K^\times\times\cdots\times K^\times$ 
can be given by an 
$s\times n$ matrix with integer entries. For $A=(a_{ij})_{i=1,\dots,s}^{j=1,\dots,n}$ consider 
$V=V(A)$, where the character $\chi_j$ (the character of the representation of 
$(K^\times)^s$ on the summand $V_j$ in \eqref{eq:decomposition of V}) is 
\[\chi_j:(K^\times )^s\to K^\times, \qquad (z_1,\dots,z_s)\mapsto \prod_{i=1}^s z_i^{a_{ij}}.\]   
The submonoid $\mathcal{B}(\chi_1,\dots,\chi_n)$ is the intersection of $\mathbb{N}_0^n$ and the kernel $\{v\in \mathbb{Z}^n\mid Av=0\in \mathbb{Z}^s\}$ of the matrix $A$. 

\begin{example}\label{example:(K*)^s} 
Consider the $(2s+1)$-dimensional representation $V(A_t)$ of the $s$-dimensional torus $(K^\times)^s$, where for some positive integer $t\ge 2$, we have 
\[A_t=\left(\begin{array}{ccccccccccc}1 & 0 & \dots &  & 0 & 1 & -t & 0 &  \dots &  & 0 \\ 0 & 1&  &  & 0 & 1 & 0 & -t &  &  & 0 \\ 0 & 0 &  &  & 0 & 1 & 0 & 0 &  &  & 0 \\ \vdots & \vdots &  &  & \vdots & \vdots & \vdots & \vdots &   &   & \vdots \\0 & 0 &\dots &  & 1 & 1 & 0 & 0 &  \dots&  & -t\end{array}\right)\in \mathbb{Z}^{s\times (2s+1)}.\]

It is easy to verify that the atoms in the submonoid $\mathcal{B}(\chi_1,\dots,\chi_{2s+1})\subset \mathbb{N}_0^{2s+1}$ are the columns $c_1,\dots,c_{s+t}$ of the following $(2s+1)\times (s+t)$ matrix: 
\[\left(\begin{array}{ccccccccccc}t & 0 & \dots & 0 & 0 & 1 & 2 & 3 & \dots & \dots & t-1 
\\0 & t &   & 0 & 0 & 1 & 2 & 3 &   &  & t-1 
\\\vdots &   &   & \vdots & \vdots & \vdots & \vdots & \vdots &   &   & \vdots 
\\0 & 0 &   & t & 0 & 1 & 2 & 3 &   &   & t-1 
\\0 & 0 &   & 0 & t & t-1 & t-2 & t-3 &   &   & 1 
\\ 1& 0 &  & 0 & 1&1&1&1&&&1
\\0 & 1 &   & 0 & 1 & 1 & 1 & 1 &   &   & 1 
\\ \vdots &  &   &  \vdots & \vdots & \vdots & \vdots & \vdots &   &   & \vdots 
\\0 & 0 & \dots & 1 & 1 & 1 & 1 & 1 & \dots & \dots & 1\end{array}\right)\]
The atoms in $\mathcal{B}(\chi_1,\dots,\chi_{2s+1})$ whose support is a proper subset 
of $\{1,\dots,2s+1\}$ are $c_1,\dots,c_s,c_{s+1}$. We have 
$|c_1|=\cdots=|c_s|=t+1$ and $|c_{s+1}|=t+s$. It follows that for all proper subset $J$ of 
$\{1,\dots,2s+1\}$, the monoid $\mathcal{B}_J$ is generated by $\{c_i\mid i\in\{1,\dots,s+1\},\ \mathrm{supp}(c_i)\subseteq J\}$. 
For $j=1,2,\dots,t-1$ we have the equality 
\[c_1+\cdots +c_s+(t-j)(c_{s+1}-c_{s+2})=c_{s+1+j}.\] 
Therefore all atoms of $\mathcal{B}$ are contained in 
$\mathbb{Z}\{c_1,\dots,c_s,c_{s+1},c_{s+2}\}$. 
Summarizing, the above considerations imply that setting $
M:=\{c_1,\dots,c_s,c_{s+1},c_{s+2}\}$ we have that 
for all $I\subseteq \{1,\dots,2s+1\}$, $\mathcal{A}_I\subset \mathbb{Z} M_J$. 
Consequently, by Theorem~\ref{thm:main} (or Theorem~\ref{thm:pos char main} when $\mathrm{char}(K)=p>0$) the following is a separating set in 
$\mathcal{O}(V(A_t))^{(K^\times)^s}$: 
\[S:=\{x_1^tx_{s+2}, \ x_2^tx_{s+3},\  \dots \ , x_s^tx_{2s+1}, \   x_{s+1}^tx_{s+2}\cdots x_{2s+1}, \  x_1\cdots x_s x_{s+1}^{t-1}x_{s+2}\cdots x_{2s+1}\}.\]
On the other hand, by Proposition~\ref{prop:monoid generators}, a minimal generating system of $\mathcal{O}(V(A_t))^{(K^\times)^2}$ is 
\[S\bigcup \{x_1^j\cdots x_s^jx_{s+1}^{t-j}x_{s+2}\cdots x_{2s+1}\mid j=2,3,\dots,t-1\}.\] 
In particular, we have 
\[\beta((K^\times)^s,V(A_t))=st+1 \qquad \text{ and } \qquad 
\beta_{\mathrm{sep}}((K^\times)^s,V(A_t))\le t+2s-1.\] 
\end{example}

\begin{corollary} \label{cor:inf}
If $\dim(G)>0$, then 
\[\inf_V\left\{\frac{\beta_{\mathrm{sep}}(G,V)}{\beta(G,V)}
\right\}\le \frac{1}{\dim(G)}.\] 
where the infimum is taken over all finite dimensional representations of $G$. 
\end{corollary}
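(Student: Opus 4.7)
The plan is to deduce Corollary~\ref{cor:inf} directly from Example~\ref{example:(K*)^s} by reducing the general diagonalizable case to the torus case via the structure theorem $G\cong G^\circ\times A$.

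First I would write $s:=\dim(G)=\dim(G^\circ)>0$, where $G^\circ$ is an $s$-dimensional torus and $A$ is a finite abelian group. For any representation $W$ of $G^\circ$ we can extend it to a representation of $G$ by letting the finite factor $A$ act trivially; since $\mathcal{O}(W)^G=\mathcal{O}(W)^{G^\circ}$ in that case, both $\beta(G,W)=\beta(G^\circ,W)$ and $\beta_{\mathrm{sep}}(G,W)=\beta_{\mathrm{sep}}(G^\circ,W)$. Thus it suffices to produce a family of representations of the torus $G^\circ\cong(K^\times)^s$ with the desired ratio.

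Next I would apply Example~\ref{example:(K*)^s} to the $s$-dimensional torus $(K^\times)^s$, with parameter $t\ge 2$. The example computes the atoms of the corresponding monoid $\mathcal{B}$ explicitly, showing via Proposition~\ref{prop:monoid generators} and Corollary~\ref{cor:beta} that $\beta((K^\times)^s,V(A_t))=st+1$, while by exhibiting the set $M=\{c_1,\dots,c_{s+2}\}$ satisfying condition (2) of Theorem~\ref{thm:main} (or (3) of Theorem~\ref{thm:pos char main} in positive characteristic) one obtains $\beta_{\mathrm{sep}}((K^\times)^s,V(A_t))\le t+2s-1$. Viewing $V(A_t)$ as a representation of $G$ (with $A$ acting trivially), we obtain
\[\frac{\beta_{\mathrm{sep}}(G,V(A_t))}{\beta(G,V(A_t))}\le\frac{t+2s-1}{st+1}.\]

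Finally, letting $t\to\infty$, the right-hand side tends to $\frac{1}{s}=\frac{1}{\dim(G)}$, which proves the desired inequality for the infimum. There is no substantive obstacle here: all the heavy lifting is done in Example~\ref{example:(K*)^s}. The only point requiring care is ensuring that the reduction to $G^\circ$ is legitimate, which follows because extending an action of $G^\circ$ to $G$ by the trivial action of $A$ does not change the invariant ring, and hence preserves both degree invariants $\beta$ and $\beta_{\mathrm{sep}}$.
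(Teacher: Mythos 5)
Your proposal is correct and is essentially the paper's own argument: the paper also inflates $V(A_t)$ to a representation of $G$ via the projection $G\cong G^\circ\times A\to G^\circ$ (equivalently, letting $A$ act trivially), observes that the invariant ring is unchanged, and lets $t\to\infty$ in the ratio $\frac{t+2s-1}{st+1}$ from Example~\ref{example:(K*)^s}. Nothing further is needed.
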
 

\begin{proof} 
By \cite[8.7 Proposition]{borel} we have 
$G\cong G^\circ\times A$ where $A$ is a finite abelian group and $G^\circ$ is a torus of rank 
$s:=\dim(G)$. It follows that we have surjective homomorphism $G\to (K^\times)^s$ of algebraic groups, and so the representation $V(A_t)$ constructed in 
Example~\ref{example:(K*)^s} lifts to a representation  of $G$ with the same algebra of invariants. Consequently, the limit of the ratio 
$\frac{\beta_{\mathrm{sep}}(G,V(A_t))}{\beta(G,V(A_t))}\le \frac{t+2s-1}{st+1}$ 
and $\lim_{t\to\infty}  \frac{t+2s-1}{st+1}=\frac 1s$. 
\end{proof} 

\begin{example}
Consider the representation $V(A)$ of $K^\times$ where 
$A=[a_1,\dots,a_n]$, $a_1\ge a_2\ge \cdots \ge a_n$, $a_1>0$, $a_n<0$, and 
$\mathrm{gcd}(a_1,a_n)=1$.  Then $\mathcal{B}_{\{1,n\}}$ is generated (as a monoid) by 
$[-a_n,0,\dots,0,a_1]^T$, whence $\beta_{\mathrm{sep}}(\mathcal{O}(V(A))^{K^\times}\ge a_1-a_n$. On the other hand, $\beta(K^\times,V(A))=a_1-a_n$ by  
\cite[Theorem 1]{wehlau}. Consequently, we have the equality 
$\beta_{\mathrm{sep}}(K^\times,V(A))=\beta(K^\times,V(A))$. 
\end{example}

%%%%%%%%%%%%%%%%%%%%%%%%%%

\section{A generalization of Theorem~\ref{thm:main}} \label{sec:generalization}

Throughout this section $D$ stands for an arbitrary subset of $\mathbb{N}_0^n$, and  
$R:=K[x^d\mid d\in D]$ denotes the subalgebra 
 of the polynomial algebra $\mathcal{O}(V)=K[x_1,\dots,x_n]$ generated by the monomials $x^d$ ($d\in D$). A subset $S\subseteq R$ is \emph{separating} if for any $v,w\in V$, such that $f(v)\neq f(w)$ for some $f\in R$, there exists $h\in S$ with $h(v)\neq h(w)$. 
 Note that any separating set $S$ contains a finite separating subset: this follows from the fact that $K[x_1,\dots,x_n]$ is noetherian by (a straightforward modification of) the argument in the proof of \cite[Theorem 2.4.8]{derksen-kemper}. 

\begin{theorem} \label{thm:general} Assume the $\mathrm{char}(K)=0$, Then the following conditions are equivalent for a subset $M$ of $D$: 
\begin{itemize}
\item[(i)] The monomials $\{x^m\mid m\in M\}$ form a separating set in $R$. 
\item[(ii)] For any subset $J\subseteq\{1,\dots,n\}$, $D$ is contained in $\mathbb{Z}M_J$. 
\end{itemize}
\end{theorem}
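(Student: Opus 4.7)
The plan is to extend the proof of Theorem~\ref{thm:main}, replacing the diagonalizable-group machinery by one application of character-group duality for algebraic tori. Recall that ``$\{x^m\mid m\in M\}$ is separating in $R$'' is equivalent to: if $x^m(v)=x^m(w)$ for all $m\in M$, then $x^d(v)=x^d(w)$ for all $d\in D$. I read condition (ii) in the natural way, namely $D_J\subseteq\mathbb{Z}M_J$ for every $J\subseteq\{1,\dots,n\}$ (matching the form of Theorem~\ref{thm:main}).

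For (ii)$\Rightarrow$(i) I would argue by support bookkeeping, as in Lemma~\ref{lemma:4}. Fix $v,w\in V$ with $x^m(v)=x^m(w)$ for every $m\in M$, set $J_v:=\mathrm{supp}(v)$, $J_w:=\mathrm{supp}(w)$, $J:=J_v\cap J_w$, and fix $d\in D$. For each $m\in M_{J_v}$ one has $x^m(v)\ne 0$, hence $x^m(w)\ne 0$ and therefore $\mathrm{supp}(m)\subseteq J_w$; this forces $M_{J_v}=M_J$, and then the hypothesis gives $D_{J_v}\subseteq\mathbb{Z}M_{J_v}=\mathbb{Z}M_J\subseteq\mathbb{Z}^J$, i.e.\ $D_{J_v}=D_J$, and symmetrically $D_{J_w}=D_J$. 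If $\mathrm{supp}(d)$ is contained in neither $J_v$ nor $J_w$, then $x^d(v)=0=x^d(w)$; otherwise, by symmetry $d\in D_{J_v}=D_J$, so $d=m'-q'$ with $m',q'\in\mathbb{N}_0 M_J$, and since $J\subseteq J_v\cap J_w$ both $x^{m'}$ and $x^{q'}$ are nonzero at $v$ and at $w$, giving $x^d(v)=x^{m'}(v)/x^{q'}(v)=x^{m'}(w)/x^{q'}(w)=x^d(w)$.

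For (i)$\Rightarrow$(ii), fix $J\subseteq\{1,\dots,n\}$ and $d\in D_J$; I want $d\in\mathbb{Z}M_J$. In characteristic zero the functor $\mathrm{Hom}(-,K^\times)$ is exact on finitely generated abelian groups, so for any subgroup $A\subseteq\mathbb{Z}^J$ an element $d\in\mathbb{Z}^J$ lies in $A$ if and only if $z^d=1$ for every $z\in(K^\times)^J$ satisfying $z^a=1$ for all $a\in A$. Applying this with $A:=\mathbb{Z}M_J$, pick such a $z$, view it as $\hat z\in V$ with $\hat z_i=z_i$ for $i\in J$ and $\hat z_i=0$ otherwise, and let $w\in V$ have $w_i=1$ for $i\in J$, $w_i=0$ otherwise. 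A case check on whether $\mathrm{supp}(m)\subseteq J$ shows $x^m(\hat z)=x^m(w)$ for every $m\in M$ (both sides vanish when $\mathrm{supp}(m)\not\subseteq J$; both equal $1$ when $\mathrm{supp}(m)\subseteq J$, since then $m\in M_J$ and $z^m=1$), so by (i) also $x^d(\hat z)=x^d(w)$, which reads $z^d=1$, as required.

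The only nontrivial ingredient is the duality step in the second direction, which is the reason for the hypothesis $\mathrm{char}(K)=0$: in positive characteristic, torsion of order divisible by $p$ in $\mathbb{Z}^J/\mathbb{Z}M_J$ is invisible to $K^\times$-valued pairings, forcing a modified formulation in the spirit of Theorem~\ref{thm:pos char main}. Everything else is routine combinatorics of supports, parallel to Lemmas~\ref{lemma:3} and~\ref{lemma:4}.
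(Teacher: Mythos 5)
Your proof is correct, and you are right that condition (ii) must be read as $D_J\subseteq\mathbb{Z}M_J$. The direction (ii)$\Rightarrow$(i) is essentially the paper's: the paper reuses the proof of Lemma~\ref{lemma:4} verbatim with $\mathcal{B}$ replaced by $D$, and your support bookkeeping is the same computation written in contrapositive form. The direction (i)$\Rightarrow$(ii) is where you genuinely diverge. The paper passes to a finite separating subset $\{m_1,\dots,m_t\}$, forms the dominant morphism $\varphi=(x^{m_1},\dots,x^{m_t})\colon V\to Y$, and invokes Lemma~\ref{lemma:constant along fibers} (Borel's 18.2 Proposition) to place $x^d$ in the purely inseparable closure of $\varphi^*K(Y)$, which in characteristic zero forces a monomial identity $x^d\prod_i(x^{m_i})^{a_i}=\prod_i(x^{m_i})^{b_i}$ and hence $d\in\mathbb{Z}M$; the passage to arbitrary $J$ is then handled by Lemma~\ref{lemma:R_J}. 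You instead use the duality between subgroups of $\mathbb{Z}^J$ and their annihilators in the torus $(K^\times)^J$ --- valid in characteristic zero because $K^\times$ is divisible, contains all finite cyclic groups, and has elements of infinite order, so characters separate points of $\mathbb{Z}^J/\mathbb{Z}M_J$ --- and for each $z$ annihilating $\mathbb{Z}M_J$ you exhibit an explicit pair $\hat z,w$ supported on $J$ that no $x^m$ separates, whence separation forces $z^d=1$ for every $d\in D_J$ and so $d\in\mathbb{Z}M_J$. This is closer in spirit to the paper's first proof of Theorem~\ref{thm:main} (Lemma~\ref{lemma:3}), but stripped of the orbit-closure machinery, and it needs neither the finiteness reduction nor Lemma~\ref{lemma:R_J}. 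What the paper's route buys is a uniform treatment of positive characteristic, where the purely inseparable closure automatically produces the factor $p^{\alpha}$ of Theorem~\ref{thm:general pos char}; your route recovers the same statement, as you note, by replacing the double annihilator of $\mathbb{Z}M_J$ with its $p$-saturation, since $\mathrm{Hom}(\mathbb{Z}/p^k,K^\times)$ vanishes in characteristic $p$.
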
 

The proof of (i) $\Longrightarrow$ (ii) is based on the following algebro-geometric lemma, which is a special case of \cite[18.2 Proposition, p. 43]{borel}: 

\begin{lemma}\label{lemma:constant along fibers} 
Let $\varphi:V\to Y$ be a dominant morphism from $V$ to an affine variety $Y$, and assume that 
$h\in \mathcal{O}(V)$ is constant along the fibres of $\varphi$. Then $h$ belongs to the purely inseparable closure of $\varphi^*(K(Y))$, where $K(Y)$ is the rational function field of $Y$ and $\varphi^*$ is the comorphism of $\varphi$.  
\end{lemma}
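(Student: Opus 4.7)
The plan is to factor $\varphi$ through its ``graph enlarged by $h$'' and exploit the fact that $h$, being constant on fibres of $\varphi$, forces this enlargement to be generically a single point over $Y$. Concretely, I would form $\psi := (\varphi, h)\colon V \to Y \times \mathbb{A}^{1}$, let $Z \subseteq Y \times \mathbb{A}^{1}$ be the Zariski closure of $\psi(V)$ endowed with its reduced structure, and factor $\psi$ as $V \stackrel{\iota}{\longrightarrow} Z \stackrel{\pi}{\longrightarrow} Y$, where $\iota$ is induced by $\psi$ and $\pi$ is the restriction of the first projection. By construction $\iota$ is dominant, and since $\varphi = \pi \circ \iota$ is dominant so is $\pi$. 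Pulling back functions yields inclusions
\[
\varphi^{*}\bigl(K(Y)\bigr) \;=\; \iota^{*}\bigl(\pi^{*}(K(Y))\bigr) \;\subseteq\; \iota^{*}\bigl(K(Z)\bigr) \;\subseteq\; K(V),
\]
and, writing $t$ for the coordinate on $\mathbb{A}^{1}$, we have $h = \iota^{*}(t|_{Z})$. So it suffices to prove that $K(Z)$ is purely inseparable over $\pi^{*}(K(Y))$.

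For this, I would first establish the crucial geometric statement: over a dense open $U \subseteq Y$ the fibre of $\pi$ is a single closed point. Indeed, $\varphi$ is surjective over some dense open subset, and for $y$ in such a subset, picking any $v \in \varphi^{-1}(y)$, the constant-on-fibres hypothesis gives $\psi(\varphi^{-1}(y)) = \{(y, h(v))\}$. Since $Z$ is the closure of $\psi(V)$, this forces $\pi^{-1}(y) = \{(y, h(v))\}$ set-theoretically for such $y$. In particular $\dim Z = \dim Y$, so the extension $K(Z)/\pi^{*}(K(Y))$ is algebraic, hence finite. Shrinking $U$ further so that $\pi$ becomes finite and flat over $U$ of degree $d = [K(Z):K(Y)]$, the geometric generic fibre of $\pi$ has cardinality equal to the separable degree $[K(Z):K(Y)]_{s}$; combining this with set-theoretic injectivity forces $[K(Z):K(Y)]_{s} = 1$. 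Hence $K(Z)$ is purely inseparable over $\pi^{*}(K(Y))$, and applying $\iota^{*}$ puts $h$ in the purely inseparable closure of $\varphi^{*}(K(Y))$ inside $K(V)$, as required.

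The main obstacle I expect is the rigorous passage from the set-theoretic statement ``generic fibre is a single point'' to the algebraic statement ``separable degree equals one.'' In positive characteristic the scheme-theoretic fibre of $\pi$ over a closed point of $U$ can be a non-reduced fat point, and one must be careful that generic finiteness and flatness of $\pi|_{U}$ are genuinely available (by appropriate shrinking of $U$) and that one uses the correct equality between the number of geometric points of such a fibre and the separable degree of the extension, rather than its scheme-theoretic length. In the characteristic-zero setting of Theorem~\ref{thm:general}, the purely inseparable closure of any field coincides with the field itself, so the conclusion simplifies to $h \in \varphi^{*}(K(Y))$, which is exactly what is needed for the application.
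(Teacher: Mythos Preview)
The paper does not give its own proof of this lemma; it simply cites it as a special case of \cite[18.2 Proposition, p.~43]{borel}. Your argument is essentially the standard proof of that result specialised to the present setting, and the overall strategy---factor through the closure $Z$ of the graph of $(\varphi,h)$, show $K(Z)/\pi^{*}K(Y)$ is purely inseparable, and pull back---is sound.

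One step needs a small repair. From $\psi(\varphi^{-1}(y))=\{(y,h(v))\}$ you jump to $\pi^{-1}(y)=\{(y,h(v))\}$ in $Z$ for $y$ in your open set $U$. Since $Z$ is the \emph{closure} of $\psi(V)$, the fibre $\pi^{-1}(y)$ might a priori contain limit points not lying in $\psi(V)$. The fix is routine: first deduce $\dim Z=\dim Y$ directly from the fact that the constructible set $\psi(V)$ has at most one point over each $y\in Y$ (apply the fibre-dimension theorem to the locally closed pieces of $\psi(V)$). Then, since $\psi(V)$ contains a dense open $W\subseteq Z$, the complement $Z\setminus W$ has dimension strictly less than $\dim Y$, so its image under $\pi$ lies in a proper closed subset of $Y$; shrinking $U$ to avoid that set gives $\pi^{-1}(y)\subseteq\psi(V)$ for $y\in U$, and your single-point conclusion follows. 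With this adjustment the remainder of your argument---generic finiteness, and the identification of the number of points in a general closed fibre over the algebraically closed field $K$ with the separable degree---goes through cleanly, and your remark that in characteristic zero this collapses to $h\in\varphi^{*}(K(Y))$ is exactly what the paper uses downstream.
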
 

For $J\subseteq \{1,\dots,n\}$ set $R_J:=K[x^d\mid d\in D_J]$. 
Lemma~\ref{lemma:2} has the following generalization: 

\begin{lemma} \label{lemma:R_J} 
If $\{x^m\mid m\in M\}$ is a separating set in $R$ for some $M\subseteq D$, then 
$\{x^m\mid m\in M_J\}$ is a separating set in $R_J$ for any $J\subseteq \{1,\dots,n\}$. 
\end{lemma}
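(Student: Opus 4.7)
The plan is to mimic the proof of Lemma~\ref{lemma:2} verbatim, stripping out the group-orbit content (which has no analogue here) and relying solely on the elementary fact that a monomial $x^m$ depends only on the coordinates indexed by $\mathrm{supp}(m)$.

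First I fix $J\subseteq\{1,\dots,n\}$ and pick $v,w\in V$ together with an $f\in R_J$ satisfying $f(v)\neq f(w)$; the aim is to produce $m\in M_J$ with $x^m(v)\neq x^m(w)$. Writing $v=\sum_{i=1}^n v_i$ and $w=\sum_{i=1}^n w_i$ with $v_i,w_i\in V_i$, set $v_J:=\sum_{j\in J}v_j$ and $w_J:=\sum_{j\in J}w_j$, so that $v_J,w_J\in V_J$. Since every generator $x^d$ of $R_J$ has $\mathrm{supp}(d)\subseteq J$, each element of $R_J$ is determined by the coordinates indexed by $J$, hence $f(v)=f(v_J)$ and $f(w)=f(w_J)$, giving $f(v_J)\neq f(w_J)$.

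Next I invoke the hypothesis. Since $R_J\subseteq R$, the element $f$ lies in $R$, so the assumption that $\{x^m\mid m\in M\}$ is a separating set for $R$ yields some $m\in M$ with $x^m(v_J)\neq x^m(w_J)$. If $\mathrm{supp}(m)\not\subseteq J$, pick any $i\in\mathrm{supp}(m)\setminus J$; then $x_i(v_J)=0=x_i(w_J)$, so the factor $x_i^{m_i}$ forces $x^m(v_J)=0=x^m(w_J)$, a contradiction. Hence $m\in M_J$. Finally, since $\mathrm{supp}(m)\subseteq J$, the same ``dependence only on $J$-coordinates'' argument gives $x^m(v)=x^m(v_J)\neq x^m(w_J)=x^m(w)$, as required.

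There is no substantial obstacle: the argument is entirely formal, essentially a one-paragraph observation parallel to the second half of the proof of Lemma~\ref{lemma:2}, and the only technical point worth spelling out is the reduction from an arbitrary pair $(v,w)\in V\times V$ to the pair $(v_J,w_J)\in V_J\times V_J$ via the projection, which is what allows the separating hypothesis on $R$ to be used in the first place.
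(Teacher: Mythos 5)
Your proof is correct and follows essentially the same route as the paper's: project $v,w$ to $v_J,w_J$, use that elements of $R_J$ depend only on the $J$-coordinates, apply the separating hypothesis on $R$, and observe that the resulting $m$ must have $\mathrm{supp}(m)\subseteq J$ since otherwise $x^m$ would vanish on both $v_J$ and $w_J$. The only cosmetic difference is that the paper first reduces to a single separating monomial $x^d$ with $d\in D_J$, whereas you work directly with an arbitrary $f\in R_J$; both are fine.
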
 

\begin{proof}
Suppose that $v,w\in V$ can be separated by $R_J$, so there is some 
$d\in D_J$ with $x^d(v)\neq x^d(w)$. Clearly $x^d(v)=x^d(v_J)$ and $x^d(w)=x^d(w_J)$. 
So $v_J$ and $w_J$ can be separated by $R$. By assumption, there exists an $m\in M$ with $x^m(v_J)\neq x^m(w_J)$. It follows that $\mathrm{supp}(m)\subseteq J$, since otherwise $x^m(v_J)=0=x^m(w_J)$ would lead to a contradiction. Thus $m\in M_J$, 
and $x^m(v)=x^m(v_J)\neq x^m(w_J)=x^m(w)$ shows that $v$ and $w$ can be separated by $M_J$. 
\end{proof} 

\begin{proofof}{Theorem~\ref{thm:general}} (i) $\Longrightarrow$ (ii): 
By Lemma~\ref{lemma:R_J} it is sufficient to show that if 
$\{x^m\mid m\in M\}$ is a separating set in $R$ for some subset $M$ of $D$, then 
$D\subseteq \mathbb{Z}M$. As we mentioned in the first paragraph of this section, 
$M$ contains a finite subset $\{m_1,\dots,m_t\}$ such that $\{x^{m_i}\mid i=1,\dots,t\}$ is a separating set in $R$. 

Consider the map $\varphi:V\to K^t$ whose coordinate functions are the $x^{m_i}$,  
$i=1,\dots,t$. Denote by $Y$ the Zariski closure of $\varphi(V)$ in $K^t$. Then $\varphi:V\to Y$ is a dominant morphism of affine varieties. Take an arbitrary non-zero $d\in D$. The assumption that the coordinate functions of $\varphi$ form a separating set in $R$ implies that $x^d$ is constant along the fibres of $\varphi$. Therefore by Lemma~\ref{lemma:constant along fibers}, there exist polynomials $h_1,h_2$ in $t$ variables, such that $h_2(x^{m_1},\dots,x^{m_t})\neq 0\in \mathcal{O}(V)$ and 
\[x^d=\frac{h_1(x^{m_1},\dots,x^{m_t})}{h_2(x^{m_1},\dots,x^{m_t})}\] 
in the field of fractions of $\mathcal{O}(V)$, 
implying in turn 
\[0\neq x^dh_2(x^{m_1},\dots,x^{m_t})=h_1(x^{m_1},\dots,x^{m_t})\in \mathcal{O}(V).\]  
This clearly implies the existence of an equality of the form 
\[x^d(x^{m_1})^{a_1}\cdots (x^{m_t})^{a_t}=(x^{m_1})^{b_1}\cdots (x^{m_t})^{b_t},\] 
where $a_1,\dots,a_t,b_1,\dots,b_t\in \mathbb{N}_0$. 
We infer $d=\sum_{i=1}^t(b_i-a_i)m_i\in \mathbb{Z}M$. 

(ii) $\Longrightarrow$ (i):  The proof of Lemma~\ref{lemma:4} works verbatim, with $\mathcal{B}$ replaced by $D$. 
\end{proofof} 

In positive characteristic the argument in the above proof 
yields the following 
(the reference to Lemma~\ref{lemma:4} has to be replaced by a reference to 
Lemma~\ref{lemma:p4}): 

\begin{theorem} \label{thm:general pos char} Assume the $\mathrm{char}(K)=p>0$, Then the following conditions are equivalent for a subset $M$ of $D$: 
\begin{itemize}
\item[(i)] The monomials $\{x^m\mid m\in M\}$ form a separating set in $R$. 
\item[(ii)] For any subset $J\subseteq\{1,\dots,n\}$ and any $d\in D$ there is a non-negative integer $\alpha$ such that  $p^{\alpha}d\in \mathbb{Z}M_J$. 
\end{itemize}
\end{theorem}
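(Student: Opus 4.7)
The plan is to follow the skeleton of the proof of Theorem~\ref{thm:general}, with the single but crucial modification that in characteristic $p$ one can only recover elements of $\mathbb{Z}M_J$ up to a Frobenius power. The implication (ii) $\Longrightarrow$ (i) will go exactly as in Lemma~\ref{lemma:p4} (applied with $\mathcal{B}$ replaced by $D$): after reducing to $J = \mathrm{supp}(b)$ where $x^b(v) \neq x^b(w)$, the hypothesis produces $m, q \in \mathbb{N}_0 M_J$ and $\alpha \geq 0$ with $p^\alpha b = m - q$, and one exploits that the Frobenius $y \mapsto y^{p^\alpha}$ is injective on $K$, so that $x^b(v) \neq x^b(w)$ forces $x^{p^\alpha b}(v) \neq x^{p^\alpha b}(w)$; the same case analysis on whether $\mathrm{supp}(w) \supseteq J$ then yields $x^m(v) \neq x^m(w)$ or $x^q(v) \neq x^q(w)$.

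For (i) $\Longrightarrow$ (ii), I would first apply Lemma~\ref{lemma:R_J} (which is characteristic-free) to reduce to showing: if $\{x^m \mid m \in M\}$ separates $R$, then for every $d \in D$ there exists $\alpha \geq 0$ with $p^\alpha d \in \mathbb{Z}M$. Extract a finite separating subset $\{m_1,\dots,m_t\} \subseteq M$ (possible by the noetherian argument mentioned at the start of Section~\ref{sec:generalization}), and let $\varphi \colon V \to Y$ be the dominant morphism to the Zariski closure $Y \subseteq K^t$ of the image of $(x^{m_1},\dots,x^{m_t})$. Since the $x^{m_i}$ form a separating set for $R$, the function $x^d$ is constant along the fibres of $\varphi$, and Lemma~\ref{lemma:constant along fibers} places $x^d$ in the purely inseparable closure of $\varphi^{*}(K(Y))$.

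This is the only step that genuinely departs from the characteristic-zero argument: rather than concluding $x^d \in \varphi^{*}(K(Y))$ outright, I obtain that $(x^d)^{p^\alpha} = x^{p^\alpha d}$ lies in $\varphi^{*}(K(Y))$ for some $\alpha \geq 0$, i.e.\ $x^{p^\alpha d}$ is a rational function in the $x^{m_i}$. Clearing denominators in $\mathcal{O}(V)$ produces an identity of monomials
\[ x^{p^\alpha d}\, (x^{m_1})^{a_1} \cdots (x^{m_t})^{a_t} \;=\; (x^{m_1})^{b_1} \cdots (x^{m_t})^{b_t}, \]
with $a_i, b_i \in \mathbb{N}_0$, from which $p^\alpha d = \sum_{i=1}^{t}(b_i - a_i)m_i \in \mathbb{Z}M$, as required.

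The only nontrivial point — and the exclusive source of the factor $p^\alpha$ in (ii) — is precisely the appeal to the purely inseparable closure in Lemma~\ref{lemma:constant along fibers}: in characteristic zero this closure is trivial and one recovers Theorem~\ref{thm:general} verbatim, whereas in characteristic $p$ the Frobenius extension is exactly what obstructs a cleaner statement and forces the $p^\alpha$ in the conclusion. Everything else is a direct transcription of the proof of Theorem~\ref{thm:general}.
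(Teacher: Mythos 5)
Your proposal is correct and matches the paper's intended argument exactly: the paper proves this theorem by rerunning the proof of Theorem~\ref{thm:general}, substituting Lemma~\ref{lemma:p4} for Lemma~\ref{lemma:4} in the direction (ii)~$\Longrightarrow$~(i), and in the direction (i)~$\Longrightarrow$~(ii) using the purely inseparable closure in Lemma~\ref{lemma:constant along fibers} to produce the Frobenius power $p^{\alpha}$, precisely as you do.
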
 

%%%%%%%%%%%%%%%%%%%%%%%%%%%

\end{document}